\documentclass[11pt,reqno]{amsart}
\usepackage{amssymb,mathrsfs,graphicx, subfigure, enumerate}
\usepackage{amsmath,amsfonts,amssymb,amscd,amsthm,bbm}
\usepackage{graphicx,colortbl,here}
\usepackage{extpfeil}
\usepackage{subcaption}
\usepackage{caption}
\usepackage{kotex}
\usepackage{hyperref}
\usepackage{cite}
\newcommand{\bbr}{\mathbb R}

\newcommand{\bx}{\mbox{\boldmath $x$}}
\newcommand{\bw}{\mbox{\boldmath $w$}}

\newcommand{\by}{\mbox{\boldmath $y$}}

\newcommand{\bv}{\mbox{\boldmath $v$}}
\newcommand{\bz}{\mbox{\boldmath $z$}}

\newcommand*\di{\mathop{}\!\mathrm{d}}
\usepackage{cite}
\usepackage{upgreek}
\hypersetup{
	hyperindex=true
}

\newcommand{\opnorm}[1]{{ \vert\kern-0.25ex \vert\kern-0.25ex \vert #1 
   \vert\kern-0.25ex \vert\kern-0.25ex \vert}}

\newtheorem{theorem}{Theorem}[section]
\newtheorem{lemma}{Lemma}[section]
\newtheorem{corollary}{Corollary}[section]
\newtheorem{proposition}{Proposition}[section]
\newtheorem{remark}{Remark}[section]
\newtheorem{definition}{Definition}[section]

\hypersetup{
	hyperindex=true
}
\hypersetup{hidelinks}
\setlength{\unitlength}{1cm} \setlength{\topmargin}{0.1in}
\setlength{\textheight}{8.4in} \setlength{\textwidth}{6in}
\setlength{\oddsidemargin}{0.1in} \setlength{\evensidemargin}{0.1in}

\allowdisplaybreaks[4]
\begin{document}

\title[Weak flocking behavior of the RKCS model]{Emergent dynamics of spatially extended relativistic kinetic Cucker-Smale model}

\author[Ha]{Seung-Yeal Ha}
\address[Seung-Yeal Ha]{\newline Department of Mathematical Sciences and Research Institute of Mathematics, \newline
	Seoul National University, Seoul, 08826, Republic of Korea}
\email{syha@snu.ac.kr}

\author[Wang]{Xinyu Wang}
\address[Xinyu Wang]{\newline Department of Mathematical Sciences, \newline
	Seoul National University, Seoul, 08826, Republic of Korea. }
\email{wangxinyu97@snu.ac.kr}

\thanks{\textbf{Acknowledgment.} The work of S.-Y. Ha is supported by National Research Foundation(NRF) grant funded by the Korea government(MIST) (RS-2025-00514472) and the work of X. Wang was supported by the Natural Science Foundation of China (grants 123B2003) and Heilongjiang Province Postdoctoral Funding (grants  LBH-Z24167).}

\begin{abstract}
We study the emergent dynamics of the relativistic kinetic Cucker-Smale (RKCS) model without assuming compactness in spatial and velocity support. In this setting, the lower bound of the kernel function in the nonlocal velocity alignment force can be zero so that the previous approach based on the energy method does not provide a quantitative flocking estimate. To overcome this difficulty, we introduce a suitable decay ansatz for the one-particle distribution function and an {\it effective domain} by identifying a time-varying region in which the total mass outside of it decays to zero asymptotically. Using these two ingredients, we show that weak flocking dynamics emerges asymptotically in the sense that the second moment for the velocity fluctuation around the velocity average tends to zero asymptotically, whereas the second moment for spatial fluctuations around the center of mass remains bounded uniformly in time. Our results demonstrate the robustness of the emergent dynamics in the RKCS model across various non-compact physically important distributions, including Gaussian, sub-Gaussian, and $D$-th moment integrable distributions.
\end{abstract}

\keywords{Cucker-Smale model, kinetic model, non-compact support, relativistic theory}

\subjclass[2020]{34D05, 70K20, 92D25}

\maketitle


\section{Introduction}\label{sec:1}
\setcounter{equation}{0}
Collective behaviors of biological complex systems are often observed in nature, to name a few, aggregations of bacteria \cite{Topaz},  flocking of birds \cite{Cucker, cucker2}, swarming of fish \cite{D-M2, Toner}, synchronization of fireflies \cite{Buck},  etc. However, despite its ubiquity, their underlying mechanisms have remained as a mystery for a while, and they 
have long intrigued scientists from diverse disciplines such as biology, control theory,  mathematics, and physics. Recently thanks to engineering applications of collective dynamics in the cooperative control of drones and unmanned aerial vehicles, they received a lot of attention via mathematical modeling. The first generation of mathematical models for synchronization was proposed by Arthur Winfree \cite{Winfree} and Yoshiki Kuramoto  \cite{HK, Kuramoto} almost a half-century ago, and later their modeling spirit was further inherited by the second generation of models by  Reynolds \cite{Re}, Vicsek et al \cite{Vicsek} and Cucker-Smale \cite{Cucker} for flocking. Since then, many phenomenological models have been proposed and extensively studied in the literature.  We refer to survey articles \cite{Al, Vicsek} for a crash introduction to collective dynamics. 

In this work, we consider a physical setting in which a particle's velocity is comparable to the speed of light so that the relativistic effect cannot be ignored, for example, space exploration missions including ESA’s Darwin project \cite{Perea2009}.  In \cite{Ha2017}, Ha and Ruggeri observed a structural analogy between a multi-temperature gas mixture system \cite{Ruggeri2015,Ruggeri2021} and the Cucker-Smale model (CS model). Via the extension of this observation to the relativistic regime, Ha, Kim, and Ruggeri \cite{Ha2020} proposed a relativistic particle model that incorporates multi-temperatures and explicit production terms. By leveraging the techniques from main-field theory \cite{Ruggeri1981} and principal subsystem theory \cite{Boillat1997}, developed by Ruggeri and his collaborators over three decades ago, they further proposed a “{\it mechanical} ” relativistic model as the possible relativistic extension of the CS model. Furthermore, they also demonstrated that their relativistic model reduces to the classical CS model in the classical limit as the speed of light tends to infinity (see \cite{Ha2021}). To set up the stage, we begin with a brief description of the mechanical counterpart of the relativistic CS model (RCS model).

Let $\bx_i$ and $\bv_i$ be the position and velocity of the $i$-th RCS particle, and  we set $\Gamma$ to be the Lorentz factor associated with the velocity $\bv$:
\begin{equation} \label{A-0-0}
\Gamma(\bv)  := \frac{1}{\sqrt{1 - \frac{|\mathbf{\bv}|^2}{c^2}}} = \frac{c}{\sqrt{c^2 - |\mathbf{\bv}|^2}} , \quad \Gamma_i := \Gamma(\bv_i), \quad \forall~ i \in [N]:= \{1, \cdots, N \},
\end{equation}
where $c$ is the speed of light and $| \cdot |$ denotes the standard $\ell^2$-norm in Euclidean space. For notational simplicity, we also introduce new momentum-like observable $\bw$ and a scalar factor $F$ associated with $\bv$:
\begin{equation} \label{A-0}
\bw := F \bv, \quad F := \Gamma \left( 1 + \frac{\Gamma}{c^2}\right), \quad  F_i := F(\bv_i), \quad i \in [N].
\end{equation}
Consider the map $G: [0, \infty) \to [0, \infty)$ defined by 
\begin{equation} \label{A-1}
G:~|\bv| \mapsto |\bw| = F(| \bv|) |\bv| = \frac{c}{\sqrt{c^2 - |\bv|^2}}\left(1 + \frac{1}{c\sqrt{c^2 - |\bv|^2}}\right) |\bv|. 
\end{equation}
Then, it is easy to see that $G$ is an increasing function of $|\bv|$. Hence, the inverse of $G$ exists, and it is also an increasing function. Moreover, it satisfies the following relation via \eqref{A-0}:
\begin{equation} \label{A-2}
G^{-1}(| \bw |) = |\bv|, \quad \mbox{i.e.,} \quad |\bw| \mapsto |\bv|= \frac{|\bw|}{F(G^{-1}(| \bw|))} =:  \frac{|\bw|}{{\tilde F}(| \bw|)}.
\end{equation}
We set 
\[  \tilde{F}_i := {\tilde F}(| \bw_i|), \quad i \in [N]. \]
Now, we consider the Cauchy problem for the RCS model in \cite{Ha2020, HLR}:
\begin{equation}\label{A-3}
\begin{cases}
\displaystyle\frac{\di \bx_i}{\di t} = \frac{\bw_i}{{\tilde F}_i}, \quad t > 0, \quad i \in [N], \\
\displaystyle\frac{\di \bw_i}{\di t} = 
\frac{\kappa}{N} \sum_{j=1}^N \phi(|\mathbf{\bx}_i - \mathbf{\bx}_j|)  \left( \frac{\bw_j}{{\tilde F}_j} - \frac{\bw_i}{{\tilde F}_i}  \right), \\
\displaystyle (\bx_i, \bw_i)\Big|_{t=0} = (\bx_{i0}, \bw_{i0}),
\end{cases}
\end{equation}
where $\kappa$ is the nonnegative coupling strength, and $\phi:  [0, \infty) \to [0, \infty)$ is the communication weight function defined by 
\begin{equation}\label{A-3-1}
\phi(s)=\frac{1}{\left(1+s^2\right)^{\frac{\beta}{2}}},  \quad \beta \ge 0.
\end{equation}
Note that the speed of the relativistic velocity $\frac{\bw_i}{{\tilde F}_i}$ is bounded by the speed of light:
\begin{equation} \label{A-3-2}
\Big | \frac{\bw_i}{{\tilde F}_i} \Big | \leq c, \quad i\in[N],
\end{equation}
and at the formal level, it follows from \eqref{A-0-0}, \eqref{A-0}, \eqref{A-1} and \eqref{A-3} that  
\begin{equation} \label{A-4}
{\tilde F}_i \to 1, \quad \bw_i \to \bv_i, \quad i \in [N] \quad \mbox{as $c \to \infty$}.
\end{equation}
Thus, system \eqref{A-3} reduces  the Cauchy problem for the CS model:
\begin{equation}  \label{A-5}
\begin{cases} 
\displaystyle	\frac{\di \mathbf{\bx}_i}{\di t} = \mathbf{\bv}_i, \quad t > 0, \quad i \in [N],\\
\displaystyle	\frac{\di \mathbf{\bv}_i}{\di t} = \frac{\kappa}{N} \sum_{j=1}^N \phi(|\mathbf{\bx}_i - \mathbf{\bx}_j|) ( \bv_j - \bv_i), \\
\displaystyle (\bx_i, \bv_i)\Big|_{t=0}= (\bx_{i0}, \bv_{i0}).
\end{cases}
\end{equation}
So far,  the classical CS model \eqref{A-5} has been extensively investigated from various perspectives, e.g., collision avoiding \cite{collision1,collision2}, hierarchical leadership \cite{leader,ex5}, rooted leadership flocking \cite{switch, L12}, multi-cluster flocking \cite{bi-cluster,bi-cluster2,bi-cluster3}, stochastic flocking \cite{ks1,ks2}, time-discrete flocking \cite{DS1,DS2,discreteDHK}, infinite particle \cite{w1,w3,J1,w5}, kinetic model \cite{k2,k1,k6,HZ2018,kinetic3,kinetic2, kinetic4,kinetic1,kinetic5,k4,k12,a2,w4}, etc. 

Next, we consider the mean-field limit equation for the relativistic system \eqref{A-3}. For this, we introduce a one-particle distribution function (or probability density function) $f = f(t, \bx, \bw)$. Then, we use the standard BBGKY Hierarchy argument to see that the dynamics of $f$ is governed by the Cauchy problem for the RKCS model \cite{Ha2021}:
\begin{equation}\label{RKCS}
	\begin{cases}
	\displaystyle	\partial_t f + \frac{\bw}{\tilde{F}} \cdot \nabla_{\mathbf{\bx}} f + \nabla_{\mathbf{\bw}} \cdot ({\bf F}_a[f] f) = 0, \quad (t, \mathbf{\bx}, \mathbf{\bw}) \in \mathbb{R}_+ \times \mathbb{R}^d \times \mathbb{R}^d, \\
		\displaystyle	{\bf F}_a[f](t, \mathbf{\bx}, \bw) := \kappa \int_{\mathbb{R}^{2d}} \phi(|\mathbf{\bx}_\star - \mathbf{\bx}|) \left( \frac{\bw_\star}{{\tilde F}_\star} - \frac{\bw}{{\tilde F}} \right) 
 f(t, \bx_\star, \bw_\star) \di \bx_\star \di \bw_\star, \\
       \displaystyle f \Big|_{t = 0} = f_0.
	\end{cases}
\end{equation}
In this paper, we are interested in the emergent dynamics of the measure-valued solution to \eqref{RKCS} with {\it non-compact velocity-spatial support}. First, we recall the concept of the measure-valued solution to \eqref{RKCS} in the space of probability measures on the phase space $\bbr^{2d}$ denoted by ${\mathcal P}(\bbr^{2d})$ and weak flocking. For $\mu\in L^\infty([0,T);\mathcal{P}(\mathbb{R}^{2d}))$, we set 
\[
\|  \mu_t \| := \int_{\bbr^{2d}} \mu_t(\di \bx, \di \bw), \quad \bx_c(t) := \frac{1}{\|  \mu_t \| } \int_{\mathbb{R}^{2d}} \bx \mu_t( \di \bx, \di \bw), \quad \bw_c(t) := \frac{1}{\|  \mu_t \| } \int_{\mathbb{R}^{2d}} \bw \mu_t( \di \bx, \di \bw).
\]
\begin{definition} \label{D1.1}
\begin{enumerate}
\item
The measure-valued map $\mu\in L^\infty([0,T);\mathcal{P}(\mathbb{R}^{2d}))$ is a measure-valued solution of \eqref{RKCS} with the initial datum $\mu_0 \in\mathcal{P}(\mathbb{R}^{2d})$ if the following relations hold: 
			\begin{enumerate}
			\item
			 $\mu$ is weakly continuous in $t$, i.e.,
			\begin{equation*}
			t \quad \mapsto \quad \int_{\mathbb{R}^{2d}}\psi(\bx,\bw)\mu_t(\di \bx,\di \bw) \quad\text{is continuous}\quad \forall~\psi \in C_0^1(\mathbb{R}^{2d}),
			\end{equation*}
			where we used a handy notation $\mu_t(\di \bx, \di \bw) := \mu(t, \di \bx, \di \bw)$. 
			\vspace{0.2cm}
			\item
			 $\mu$ satisfies \eqref{RKCS} in weak sense:
			\begin{align*}
			\begin{aligned}
			& \int_{\mathbb{R}^{2d}}\psi(\bx, \bw) \mu_t(\di \bx,\di \bw)  = \int_{\mathbb{R}^{2d}}\psi(\bx, \bw) \mu_0(\di \bx,\di \bw)  \\
			& \hspace{1.5cm} + \int_0^t\int_{\mathbb{R}^{2d}} \Big (\partial_s\psi+\frac{\bw}{\tilde{F}}\cdot \nabla_{\bx} \psi+\nabla_{\bw}\psi\cdot {\bf F}_a[\mu_s] \Big ) \Big|_{(\bx, \bw)} \mu_s(\di \bx,\di \bw) \di s,
			\end{aligned}
			\end{align*}
			for $\psi\in C_c^1([0,T)\times\mathbb{R}^{2d})$ and the velocity alignment forcing ${\bf F}_a[\mu_t]$ is given as follows.
\[
{\bf F}_a[\mu_t](t, \mathbf{\bx}, \bw) := \kappa \int_{\mathbb{R}^{2d}} \phi(|\mathbf{\bx}_\star - \mathbf{\bx}|) \left( \frac{\bw_\star}{{\tilde F}_\star} - \frac{\bw}{{\tilde F}} \right) 
 \mu_t(\di \bx_\star, \di \bw_\star).
\]	
\end{enumerate}
\item		
The Cauchy problem \eqref{RKCS}  exhibits weak (mono-cluster) flocking if the measure-valued solution  $\mu\in L^\infty([0,T);\mathcal{P}(\mathbb{R}^{2d}))$ for \eqref{RKCS} satisfies 
\[
\begin{cases}
\displaystyle \sup\limits_{0\le t< \infty} \int_{\mathbb{R}^{2d}} |\bx - \bx_c(t) |^2 \mu_t(\di \bx, \di \bw)<\infty:~&~\mbox{spatial cohesion}, \vspace{0.2cm}\\
\displaystyle \lim_{t \to \infty} \int_{\mathbb{R}^{2d}} |\bw - \bw_c(t)|^2 \mu_t(\di \bx, \di  \bw)  = 0:~~&\mbox{velocity alignment}.
\end{cases}
\]
\end{enumerate}
\end{definition}
\noindent In \cite{Ha2020, Ha2021}, the authors focus solely on compactly supported initial datum, which restricts the applicability of system \eqref{RKCS} in a more general setting, like Cauchy (Lorentz), Gaussian, sub-Gaussian distributions and so on. Recently, the authors \cite{w2,w7} studied the asymptotic behavior of the second-order velocity moment of the KCS model and RKCS model, respectively. However, compared to the velocity moment, the position moment can reflect the degree of aggregation. The collective behavior of the position support has been extensively investigated for the KCS model from various perspectives \cite{k6,w6,w8}, while the emergent behavior of the position support of the RKCS model with non-compact velocity-spatial support remains unexplored so far. 
Thus, a natural question is the following emergent dynamics for the RKCS model \eqref{RKCS}: \vspace{0.1cm}

\begin{center}
	``Under what conditions on system parameters and initial datum with non-compact support, can we derive a weak flocking dynamics for \eqref{RKCS}?"
\end{center}
\vspace{0.1cm}

In this paper, we answer the above question in an affirmative manner. More precisely, our main results of this paper can be summarized as follows. First, we consider the class of polynomially decaying initial distributions with unit mass:
\[
 \int_{\mathbb{R}^{2d}}  \mu_0(\di \bx,\di \bw) = 1, \quad {\mathcal M}_p(\mu_0, D) := \int_{\mathbb{R}^{2d}}( |\bx|^D+ |\bw|^D )\mu_0(\di \bx,\di \bw) < \infty, \quad D \gg 1,
\]
where the subscript $p$ denotes the initial for a polynomial decay.  These finite moments are propagated along the RKCS flow \eqref{RKCS} (see Lemma \ref{L2.2} and Lemma \ref{L2.4}):
\[
 \int_{\mathbb{R}^{2d}}  \mu_t(\di \bx,\di \bw) = 1, \quad  \int_{\mathbb{R}^{2d}}  |\bw|^D \mu_t(\di \bx,\di \bw) \leq  {\mathcal M}_p(\mu_0, D), \quad 
  \int_{\mathbb{R}^{2d}} |\bx|^D \mu_t(\di \bx,\di \bw) \lesssim (1 + t)^D,
\]
where $f \lesssim g$ means that there exists a generic constant $C$ independent of $f$ and $g$ such that $f \leq C g$. Then we derive polynomial decay estimates: there exists a positive constant $\lambda > 2$ such that 
\begin{align*}
\begin{aligned}
& \int_{\mathbb{R}^{2d}} |\bw - \bw_c(t)|^2 \mu_t(\di \bx, \di  \bw) \lesssim (1 + t)^{-\lambda} \quad \mbox{and} \\
& \Big| \left(\int_{\mathbb{R}^{2d}} |\bx -\bx_c (t)|^2\mu_t(\di \bx,\di \bw)\right)^{\frac{1}{2}} - \left(\int_{\mathbb{R}^{2d}} |\bx -\bx_c(0) |^2\mu_0(\di \bx,\di \bw)\right)^{\frac{1}{2}} \Big|  \lesssim \Big(1+(1 + t)^{-(\frac{\lambda}{2} - 1)} \Big).
\end{aligned}
\end{align*}
For the details, we refer to Theorem \ref{T3.1} and Section \ref{sec:4}. Second, we consider the class of exponentially decaying initial distributions:
\[  \int_{\mathbb{R}^{2d}}  \mu_0(\di \bx,\di \bw) = 1, \quad {\mathcal M}_e(\mu_t, \alpha) = \int_{\mathbb{R}^{2d}} e^{\alpha\left(|\bx|+ |\bw|\right)} \mu_t(\mathrm{\di}\bx, \mathrm{\di }\bw) < \infty,\quad \text{for} \quad \alpha>0. \]
Similar to the case of polynomially decaying distributions, there exists a positive constant $\tilde{\lambda} > 0$ such that 
\[
\int_{\mathbb{R}^{2d}} |\bw - \bw_c(t)|^2 \mu_t(\di \bx, \di  \bw) \lesssim \exp \Big(-(1 + t)^{-{\tilde \lambda}} \Big), \quad \sup_{0 \leq t < \infty} \int_{\mathbb{R}^{2d}} |\bx-\bx_c(t) |^2\mu_t(\di \bx,\di \bw) < \infty.
\]
For the details, we refer to Theorem \ref{T3.2} and Section \ref{sec:5}. 

\vspace{0.3cm}

The rest of this paper is organized as follows. In Section \ref{sec:2}, we first review basic definitions and a priori estimates for the RKCS model. In Section \ref{sec:3}, we summarize our main results and a proof strategy. In Section \ref{sec:4} and Section \ref{sec:5}, we provide proofs of our main results for algebraically and exponentially decaying distributions, respectively. Finally, Section \ref{sec:6} is devoted to a brief summary of our main results and some discussions for a future work.

\vspace{0.3cm}

\noindent\textbf{Notation:} We define the \(\bx\)-support, \(\bw\)-support and their diameters of measure-valued solution $f$ as follows:
\begin{align*}
\begin{aligned}
&\Omega_{\bx}(f(t)) := \mathbb{P}_{\bx}\operatorname{supp} f(t, \cdot, \cdot), \qquad\quad \Omega_{\bw}(f(t)) := \mathbb{P}_{\bw}\operatorname{supp} f(t, \cdot, \cdot),\\
& D_{\bw}(t):= \sup_{\bw, \bw_{\star} \in \Omega_{w}(f(t))} |\bw_{\star}- \bw|,\quad D_{\bx}(t) := \sup_{\bx, \bx_{\star}\in \Omega_x(f(t))} |\bx_{\star} - \bx|,\\
\end{aligned}
\end{align*}
where $\mathbb{P}_{\bx}$ and $\mathbb{P}_{\bw}$ denote the projection on $\mathbb{R}_{\bx}^d$ and $\mathbb{R}_{\bw}^d$, respectively, and we set
\[ \bbr = (-\infty, \infty), \quad \bbr_+ = (0, \infty). \]

\section{Preliminaries}\label{sec:2}
\setcounter{equation}{0}
In this section, we recall basic materials and study basic estimates for later sections. 
	\subsection{Basic materials}\label{sec:2.1}
	In this subsection, we study several preparatory materials to be used throughout the paper.  First, we recall several definitions on measure and distance between them. 
\begin{definition}\label{def:23}
Let $T\in (0,\infty]$ be a positive real number.
\begin{enumerate}
\item
The support of a Borel measure $\mu$ on $\mathbb{R}^{2d}$ is the closure of the set $\{(\bx,\bw)\in\mathbb{R}^{2d}:\mu(B_r(\bx,\bw))>0, \forall\ r>0\}$.
\vspace{0.1cm}
\item
Let $\mu$ be a Borel measure on $\mathbb{R}^{n}$, and let $T: \mathbb{R}^{n}\rightarrow\mathbb{R}^{n}$ be a measurable map. The push-forward measure of $\mu$ by $T$ is the measure $T\#\mu$ defined by 
\[ T\#\mu(O)=\mu(T^{-1}(O)), \quad \mbox{for all Borel set $O\subset\mathbb{R}^{n}$}. \]
\end{enumerate}
\end{definition}
The definition of Wasserstein distance is as follows.
		\begin{definition}\label{def:24}
		For $p\in[1,\infty)$, let $\mu$ and $\nu$ be two probability measures in $\mathcal{P}(\mathbb{R}^{d})$. Then, the Wasserstein distance $W_p(\mu,\nu)$ of order $p$ between $\mu$ and $\nu$ is defined as follows.
			\begin{equation*}
				W_p(\mu,\nu):=\inf\left\{\left(\int_{\mathbb{R}^{2d}} |\bx-\bx_{\star}|^p\pi(\di \bx,\di \bx_{\star})\right)^\frac{1}{p}:\pi\in\Pi(\mu,\nu)\right\},
			\end{equation*}
			where $\Pi(\mu,\nu)$ is the set of joint probability measures on $\mathbb{R}^{2d}$ with marginals $\mu$ and $\nu$, respectively. For all integrable measurable functions $\psi$, ${\tilde \psi}$ on $\mathbb{R}^{d}$,
			\begin{equation}\label{f3.1}
				\int_{\mathbb{R}^{d}\times\mathbb{R}^{d}}(\psi(\bx)+{\tilde \psi}(\bx_{\star}))\pi(\di \bx,\di \bx_{\star})=\int_{\mathbb{R}^{d}}\psi(\bx)\mu(\di \bx)+\int_{\mathbb{R}^{d}}{\tilde \psi}(\bx_{\star})\nu(\di \bx_{\star}).
			\end{equation}
		\end{definition}
\begin{remark}		
In order to avoid the trouble that $W_p$ may take the value $+\infty$, we consider $W_p$ on $\mathcal{P}_p(\mathbb{R}^{d})$:
		\begin{equation*}
			\mathcal{P}_p(\mathbb{R}^{d}):=\left\{\mu\in\mathcal{P}(\mathbb{R}^{d}):\int_{\mathbb{R}^{d}}|\bz|^p\mu(\di \bz)<\infty\right\}.
		\end{equation*}
\end{remark}	
In the next lemma, we characterize the convergence in $W_p$.
		\begin{lemma}
		\emph{\cite{k8}} \label{le2.5}
The following assertions hold. 
\begin{enumerate}
\item		
The space $\mathcal{P}_p(\mathbb{R}^{d})$ endowed with the p-Wasserstein metric is a complete metric space.
\item			
Let $(\mu_k)_{k\in\mathbb{N}}$ be a sequence of probability measures in $\mathcal{P}_p(\mathbb{R}^{d})$, and let $\mu$ be an element of $\mathcal{P}_p(\mathbb{R}^{d})$. We say that $(\mu_k)_{k\in\mathbb{N}}$ has uniformly integrable p-th moments if for some $x_0\in\mathbb{R}^d$$:$
			\begin{equation*}
				\lim\limits_{r\rightarrow\infty}\int_{\mathbb{R}^d\setminus B_r(\bx_0)} |\bx-\bx_0|^p\mu_k(\di \bx)=0\quad \text{uniformly with respect to }k\in\mathbb{N}.
			\end{equation*}
			Moreover, we say that $\mu_k$ converge weakly to $\mu$ if
			\begin{align}\label{e3.1}
				\lim\limits_{k\rightarrow\infty}\int_{\mathbb{R}^d}\psi(\bx) \mu_k(\di \bx)=\int_{\mathbb{R}^d}\psi(\bx) \mu(\di \bx)\quad \text{for all}\quad\psi \in C_b(\mathbb{R}^d).
			\end{align}
			In particular, we have 
			\begin{equation*}
				\lim\limits_{k\rightarrow\infty}W_p(\mu_k,\mu)=0 \quad \Longleftrightarrow \quad \left\{\begin{aligned}
					&\mu_k\quad\text{converge weakly to}\quad\mu,\\
					&(\mu_k)_{k\in\mathbb{N}} \quad\text{has uniformly integrable p-moments.}
				\end{aligned}\right.
			\end{equation*}
\end{enumerate}			
		\end{lemma}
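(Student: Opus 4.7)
The plan is to establish the two assertions in sequence, following the classical optimal-transport route via tightness arguments and coupling constructions.

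For assertion (1), I would start with a Cauchy sequence $(\mu_k)$ in $(\mathcal{P}_p(\mathbb{R}^d), W_p)$ and first show that the $p$-th moments are uniformly bounded, which follows from the triangle inequality $W_p(\mu_k, \delta_0) \leq W_p(\mu_k, \mu_1) + W_p(\mu_1, \delta_0)$ together with the Cauchy property. Markov's inequality then yields tightness, so Prokhorov's theorem delivers a subsequence $\mu_{k_j}$ converging weakly to some limit $\mu$. Lower semicontinuity of the functional $\nu \mapsto \int |\bx|^p \, \nu(\di \bx)$ under weak convergence, obtained by truncating $|\bx|^p$ at height $R$ and passing $R \to \infty$ via Fatou, shows $\mu \in \mathcal{P}_p(\mathbb{R}^d)$. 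To upgrade subsequential weak convergence to full-sequence $W_p$-convergence, I would combine the Cauchy property with the lower semicontinuity of $W_p$ with respect to weak convergence of couplings, giving $W_p(\mu_k,\mu) \leq \liminf_j W_p(\mu_k,\mu_{k_j}) \to 0$.

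For assertion (2), I would treat the two implications separately. For the direction $W_p$-convergence $\Rightarrow$ weak convergence plus uniform integrability of $p$-th moments, weak convergence follows from $W_1 \leq W_p$ (Jensen) and the Kantorovich--Rubinstein representation of $W_1$ as a metric for weak convergence on $\mathcal{P}_1$. Uniform integrability is obtained by choosing a near-optimal coupling $\pi_k$ and estimating the tail integral of $|\bx|^p$ under $\mu_k$ through the triangle-type inequality $|\bx|^p \leq 2^{p-1}(|\bx_\star|^p + |\bx - \bx_\star|^p)$, which reduces the tails of $\mu_k$ to the tails of $\mu$ plus $W_p(\mu_k,\mu)^p$. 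For the converse direction, I would fix $\varepsilon > 0$, choose $R$ large enough that the $p$-mass of $\mu_k$ and $\mu$ outside $B_R$ is uniformly small (by uniform integrability), and then build a coupling by gluing: on the restriction to $B_R$, weak convergence of the bulk parts together with boundedness (hence equivalence of weak and $W_p$ convergence on $B_R$) supplies a near-optimal coupling whose cost tends to zero, while on the complementary region the cost is controlled directly by uniform integrability.

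The main obstacle I anticipate is the converse direction of (2): the bulk restriction of $\mu_k$ to $B_R$ need not be a probability measure with the same mass as the bulk of $\mu$, so the coupling must be constructed through a careful renormalization and a ``leftover-mass'' pairing that contributes to the tail cost. Making this rigorous requires verifying that the mismatched masses can be transported at cost controlled by a combination of $\varepsilon$ and the tail integrals, both of which are uniformly small by hypothesis. Once the two pieces of the coupling are glued, the bound $W_p^p(\mu_k, \mu) \leq \text{(bulk cost)} + \text{(tail cost)}$ closes the argument, and letting first $k \to \infty$ and then $R \to \infty$ (via $\varepsilon \to 0$) yields $W_p(\mu_k,\mu)\to 0$.
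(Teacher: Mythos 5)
The paper does not provide a proof of this lemma --- it is cited directly from Villani's \emph{Optimal Transport Old and New} (reference \cite{k8}), so there is no internal argument to compare your outline against. Your reconstruction correctly follows the standard route found there (tightness via Markov plus Prokhorov and lower semicontinuity of $W_p$ for completeness; Kantorovich--Rubinstein for the forward implication in (2); truncation/gluing with a careful treatment of the mass mismatch for the converse), and your flagging of the renormalization step in the converse direction as the genuinely delicate point is accurate.
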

\subsection{Elementary estimates} \label{sec:2.2} 
In this subsection, we study several a priori estimates. In the following lemmas, we recall several estimates from earlier works \cite{Ha2020, Ha2021, w7}. Without loss of generality, we assume that $\kappa=1$ in the sequel.
\begin{lemma} \label{L2.2}
\emph{\cite{Ha2020,Ha2021,w7}} 
Suppose that $\mu_0 \in P_p(\mathbb{R}^{2d}) (p\ge2)$ is the nonnegative initial probability measure. Then, there exists a unique global measure-valued solution $\mu \in L^{\infty}([0,T);\mathcal{P}_p(\mathbb{R}^{2d}))$ to \eqref{RKCS} with the following moment estimates:~for $t > 0$, 
		\begin{align*}
		\begin{aligned}
	         & (i)~ \frac{\di}{\di t}\int_{\bbr^{2d}}\mu_t(\di \bx,\di \bw)=0, \quad \frac{\di}{\di t}\int_{\bbr^{2d}}\bw \mu_t(\di \bx,\di \bw)=0. \\
	        &  (ii)~\dfrac{\di}{\di t}\int_{\mathbb{R}^{2d}} |\bw|^2\mu(t,\di \bx,\di \bw)\\ 
	        & \hspace{1cm} =-\int_{\bbr^{4d}}\left\langle \bw_{\star}-\bw,\frac{\bw_{\star}}{{\tilde F}_{\star}} - \frac{\bw}{{\tilde F}} \right\rangle   \phi(|\bx-\bx_\star|)\mu_t(\di \bx,\di \bw) \mu_t(\di \bx_{\star},\di \bw_{\star})\le 0. \\
	         & (iii)~ \dfrac{\di}{\di t}\int_{\bbr^{2d}} |\bw-\bw_c(t)|^2\mu_t(\di \bx,\di \bw) \\
	         &  \hspace{1cm} =-\int_{\bbr^{4d}}\left\langle \bw_{\star}-\bw,\frac{\bw_{\star}}{{\tilde F}_{\star}} - \frac{\bw}{{\tilde F}} \right\rangle   \phi(|\bx-\bx_\star|)\mu_t(\di \bx,\di \bw) \mu_t(\di \bx_{\star},\di \bw_{\star})\le 0.
		 \end{aligned}
	\end{align*}
\end{lemma}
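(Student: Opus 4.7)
The plan is to extract all three identities directly from the weak formulation in Definition~\ref{D1.1}, treating existence and uniqueness of the measure-valued solution as known from the cited references (obtained there by a fixed-point argument on push-forwards of the particle flow associated to \eqref{A-3}). The three key ingredients I will use are: choosing appropriate test functions $\psi$; symmetrizing the resulting double integrals by swapping $(\bx,\bw) \leftrightarrow (\bx_\star,\bw_\star)$; and exploiting the monotonicity of the radial map $r \mapsto r/\tilde F(r)$ inherited from \eqref{A-2}.

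For (i), I will first test with $\psi \equiv 1$; every $\psi$-derivative vanishes and the weak formulation yields $\tfrac{\di}{\di t}\|\mu_t\|=0$. Then I will test component-wise with $\psi(\bx,\bw)=\bw_k$, so that the transport term and $\partial_s\psi$ drop out and only the forcing contribution survives. Swapping the dummy variables $(\bx,\bw)\leftrightarrow(\bx_\star,\bw_\star)$ in the resulting double integral flips the sign of the integrand $\phi(|\bx-\bx_\star|)\bigl(\bw_{\star,k}/\tilde F_\star - \bw_k/\tilde F\bigr)$ while leaving the product measure invariant; therefore the integral equals its own negative and vanishes.

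For (ii), I will take $\psi(\bx,\bw)=|\bw|^2$ so that $\nabla_\bw\psi=2\bw$. Inserting this into the weak formulation and symmetrizing the resulting double integral by the same $(\bx,\bw)\leftrightarrow(\bx_\star,\bw_\star)$ swap, then averaging the two representations, produces exactly the claimed identity. To conclude non-positivity it suffices to establish the pointwise inequality
\[
\Big\langle \bw_\star - \bw,\ \frac{\bw_\star}{\tilde F_\star} - \frac{\bw}{\tilde F}\Big\rangle \geq 0.
\]
Setting $h(r):=r/\tilde F(r)$, which by \eqref{A-2} coincides with $G^{-1}(r)$ and is therefore strictly increasing, a direct expansion of this inner product combined with two applications of Cauchy--Schwarz ($\bw\cdot\bw_\star \leq |\bw||\bw_\star|$) produces the lower bound $(|\bw_\star|-|\bw|)\bigl(h(|\bw_\star|)-h(|\bw|)\bigr)$, which is nonnegative by monotonicity of $h$.

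For (iii), I will use (i): since $\|\mu_t\|$ and $\int \bw\,\mu_t$ are both conserved, $\bw_c(t)\equiv\bw_c(0)$ is a time-independent vector. Expanding $|\bw-\bw_c|^2=|\bw|^2-2\bw\cdot\bw_c+|\bw_c|^2$ and differentiating in $t$ annihilates the two lower-order terms by (i), so the identity in (iii) reduces to the one already proved in (ii). The main obstacle I anticipate is the monotonicity inequality above: once it is in place, every other step is a routine combination of the weak formulation, the Fubini-based swap symmetrization, and (i); existence and uniqueness are quoted from \cite{Ha2020,Ha2021,w7}.
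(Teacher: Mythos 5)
Your proof is correct and follows the standard route that the cited references \cite{Ha2020,Ha2021,w7} take: test the weak formulation with $\psi\equiv 1$, $\psi=\bw_k$, $\psi=|\bw|^2$, symmetrize the resulting double integral under $(\bx,\bw)\leftrightarrow(\bx_\star,\bw_\star)$, and invoke the monotonicity of $r\mapsto r/\tilde F(r)=G^{-1}(r)$ to obtain the sign; the paper itself does not re-prove this lemma but simply imports it, so there is no in-paper argument to compare against. The one detail you elide is that $\bw_k$ and $|\bw|^2$ are not admissible test functions as stated (they are not compactly supported); as the paper does in its proof of Lemma \ref{L2.4}, one should insert a smooth cutoff $\chi_R(\bx)\chi_R(\bw)$, pass to the limit $R\to\infty$ using the finiteness of $\int(|\bx|^p+|\bw|^p)\,\mu_t$ for $p\ge 2$ and the bound $|\bw/\tilde F|\le c$, and only then read off the identities; your verification of the pointwise inequality $\langle \bw_\star-\bw,\,\bw_\star/\tilde F_\star-\bw/\tilde F\rangle\ge(|\bw_\star|-|\bw|)\bigl(h(|\bw_\star|)-h(|\bw|)\bigr)\ge 0$ via Cauchy--Schwarz on the cross terms is exactly right, as is the reduction of (iii) to (ii) using conservation of mass and momentum.
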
	
\begin{remark} \label{R2.1}
It follows from the first assertion of Lemma \ref{L2.2} that 
\[ \| \mu_t \| = \| \mu_0 \|, \quad \bw_c(t) = \bw_c(0), \quad \forall~t > 0. \]
\end{remark}
\begin{lemma}  
\emph{\cite{Ha2020,Ha2021}} \label{L2.3}
Suppose that \( \bx, \by \in \mathbb{R}^d \) satisfy
	\[
	|\bx| \le R, \quad |\by| \le R, \quad \mbox{for some $R > 0$}.
	\]
Then, there exists a positive constant $\Lambda(R)$ such that 
\[
(\bx - \by) \cdot \left( \frac{\bx}{\tilde{F}(\bx)} - \frac{\by}{\tilde{F}(\by) } \right) \geq \Lambda(R) |\bx - \by|^2, \quad \left|\frac{\bx}{\sqrt{1 + \frac{|\bx|^2}{c^2}}} - \frac{\by}{\sqrt{1 + \frac{|\by|^2}{c^2}}} \right|\le |\bx-\by|.
\]
\end{lemma}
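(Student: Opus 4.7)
The plan is to treat both inequalities as differential regularity statements for the two radial maps $T(\bw) := \bw/\tilde{F}(|\bw|)$ and $S(\bw) := \bw/\sqrt{1 + |\bw|^2/c^2}$. Each has the form $\bw \mapsto \varphi(|\bw|)\bw$ for a scalar function $\varphi$, and a direct computation gives the symmetric Jacobian
\[
\nabla(\varphi(|\bw|)\bw) = \varphi(|\bw|)\, I + \frac{\varphi'(|\bw|)}{|\bw|}\bw\bw^\top,
\]
whose eigenvalues are $\varphi(|\bw|)$ (with multiplicity $d-1$, along directions orthogonal to $\bw$) and $(r\varphi(r))'|_{r=|\bw|}$ (along $\bw$). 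The two inequalities then reduce to lower, resp.\ upper, bounds on these two scalar quantities, combined with the fundamental theorem of calculus along the segment from $\by$ to $\bx$.

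For the first inequality I would take $\varphi(r) = 1/\tilde{F}(r)$. By \eqref{A-2}, $r/\tilde{F}(r) = |\bv|$ viewed as a function of $r = |\bw|$, so $(r\varphi(r))' = d|\bv|/d|\bw| = 1/G'(|\bv|)$ with $G(s) = F(s)s$. Since $F(|\bv|) = \Gamma(|\bv|)(1 + \Gamma(|\bv|)/c^2) \ge 1 + 1/c^2 > 1$, the relation $|\bw| = F(|\bv|)|\bv|$ forces $|\bv| < |\bw| \le R$, so $|\bv|$ is confined to a compact subset of $[0, c)$. On that set, $F$ and $F'$ are bounded, so both $\varphi(r)$ and $(r\varphi(r))'$ admit a common positive lower bound $\Lambda(R)$. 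The ball $\{|\bw| \le R\}$ being convex, the segment joining $\by$ to $\bx$ stays inside it, and integration yields
\[
(\bx - \by)\cdot(T(\bx) - T(\by)) = \int_0^1 (\bx-\by)^\top \nabla T((1-s)\by + s\bx)(\bx - \by)\, ds \ge \Lambda(R)|\bx - \by|^2.
\]

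For the second inequality, set $\varphi = g$ with $g(r) := 1/\sqrt{1 + r^2/c^2}$ and compute directly that $(rg(r))' = 1/(1 + r^2/c^2)^{3/2}$. Both $g(r)$ and this derivative lie in $(0, 1]$ for every $r \ge 0$, so $\|\nabla S(\bw)\|_{\mathrm{op}} \le 1$ globally. Integrating $\nabla S$ along the segment from $\by$ to $\bx$ then yields $|S(\bx) - S(\by)| \le |\bx - \by|$ with no constraint on $|\bx|, |\by|$.

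The main technical point is the first estimate, since $\Lambda(R)$ must remain strictly positive uniformly on $\{|\bw| \le R\}$, and this hinges on keeping $|\bv|$ strictly below the speed of light there. The crude inequality $|\bv| < |\bw|$ furnished by $F \ge 1 + 1/c^2$ circumvents the need for an explicit formula for $G^{-1}$, and continuity of $\Gamma$ on the resulting compact subinterval of $[0, c)$ delivers the required positive constant.
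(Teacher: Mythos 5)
Your approach---viewing both maps as radial maps $\bw \mapsto \varphi(|\bw|)\bw$, computing the Jacobian, reading off its eigenvalues $\varphi(r)$ and $(r\varphi(r))'$, and integrating along the segment---is clean and does prove both inequalities. The paper itself only cites \cite{Ha2020,Ha2021} and gives no internal argument, so there is nothing to compare against; your proof stands on its own. The computation of the Jacobian, the observation that it is symmetric with the stated eigenvalues, the identity $(r\varphi(r))' = (G^{-1})'(r) = 1/G'(|\bv|)$, and the upper-bound argument for $g(r) = (1+r^2/c^2)^{-1/2}$ with $(rg(r))' = (1+r^2/c^2)^{-3/2}$ are all correct.

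There is, however, one gap in the coercivity step. You claim that the crude bound $|\bv| < |\bw| \le R$, obtained from $F \ge 1 + 1/c^2$, ``confines $|\bv|$ to a compact subset of $[0,c)$.'' That inference only works if $R < c$: if $R \ge c$, the bound $|\bv| < R$ says nothing beyond the fact that $|\bv|$ always lies in $[0,c)$, which is \emph{not} a compact set. What you actually need---and what your constant $\Lambda(R)$ depends on---is a uniform bound $|\bv| \le \rho(R) < c$. This comes directly from the structure already in the paper around \eqref{A-1}--\eqref{A-2}: the map $G : [0,c) \to [0,\infty)$, $G(s) = F(s)s$, is a continuous, strictly increasing bijection onto $[0,\infty)$ (since $G(s) \ge \Gamma(s)s \to \infty$ as $s \to c^-$), so $G^{-1}$ is well-defined, continuous, and increasing, and $|\bv| = G^{-1}(|\bw|) \le G^{-1}(R) < c$ for \emph{any} finite $R$. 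With $|\bv| \in [0, G^{-1}(R)]$, the functions $F$, $F'$, and hence $G' = F + sF'$ are bounded above on that compact interval, and $\varphi(r) = 1/\tilde{F}(r)$ and $(r\varphi(r))' = 1/G'(|\bv|)$ are bounded below by a positive constant $\Lambda(R)$. Your remark that you wished to ``circumvent the need for an explicit formula for $G^{-1}$'' is unnecessary: no formula is required, only the qualitative bijectivity, which the paper already grants. Replacing the $|\bv| < |\bw|$ shortcut with $|\bv| \le G^{-1}(R) < c$ closes the gap for all $R > 0$.
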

\begin{proof}
For detailed proofs, we refer to \cite{Ha2020,Ha2021}.
\end{proof}
\begin{remark} \label{R2.3}  By direct calculation, the optimal order of positive constant $\Lambda(R)$ with respect to $ R$ is explicitly given by 
\[ \Lambda(R) :=\mathcal{O}\left(\frac{1}{2\left(1+\frac{R^2}{c^2}\right)^{\frac{3}{2}}}\right). \] 
\end{remark}
Recall that 
\[ {\mathcal M}_p(\mu_0, D) = \int_{\mathbb{R}^{2d}}( |\bx|^D+ |\bw|^D )\mu_0(\di \bx,\di \bw),  \quad {\mathcal M}_e(\mu_0, \alpha):= \int_{\mathbb{R}^{2d}}e^{\alpha(|\bx| + |\bw|)} \mu_0(\di \bx,\di \bw).
\]
\begin{lemma} \label{L2.4}
For $D \in {\mathbb N}$ and $T \in (0, \infty]$, let $\mu \in L^{\infty}([0,T);\mathcal{P}_p(\mathbb{R}^{2d}))$ be a measure-valued solution to \eqref{RKCS} with the initial datum $\mu_0$ satisfying 
\[
 \int_{\mathbb{R}^{2d}}  \mu_0(\di \bx,\di \bw) = 1, \quad  {\mathcal M}_p(\mu_0, D)  < \infty.
\]
Then, the following estimates hold: for $t \geq 0$,
\begin{align*}
\begin{aligned}
& (i)~\int_{\mathbb{R}^{2d}} |\bw|^{D}\mu_t(\di \bx,\di \bw) \le	\int_{\mathbb{R}^{2d}} |\bw|^{D}\mu_0( \di \bx,\di \bw). \\
& (ii)~\int_{\mathbb{R}^{2d}} |\bx|^{D}\mu_t(\di \bx,\di \bw) \le	\Big[ \left(\int_{\mathbb{R}^{2d}}|\bx|^{D}\mu_0( \di \bx,\di \bw)\right)^{\frac{1}{D}}+ ct \Big ]^{D}.
\end{aligned}
\end{align*}
\end{lemma}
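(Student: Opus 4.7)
For (i), the plan is to test the weak formulation of \eqref{RKCS} against $\psi(\bx,\bw)=|\bw|^D$ (modulo a standard truncation/approximation by a compactly supported function, which is justified by the assumed finiteness of $\mathcal{M}_p(\mu_0,D)$ and the propagation of moments), and then symmetrize. This yields
\begin{align*}
\frac{\di}{\di t}\int_{\mathbb{R}^{2d}}|\bw|^{D}\mu_t(\di\bx,\di\bw)
&=D\int_{\mathbb{R}^{2d}}|\bw|^{D-2}\bw\cdot{\bf F}_a[\mu_t](t,\bx,\bw)\,\mu_t(\di\bx,\di\bw)\\
&=-\frac{D}{2}\int_{\mathbb{R}^{4d}}\phi(|\bx_\star-\bx|)\bigl(|\bw|^{D-2}\bw-|\bw_\star|^{D-2}\bw_\star\bigr)\cdot\left(\frac{\bw}{\tilde F}-\frac{\bw_\star}{\tilde F_\star}\right)\mu_t\otimes\mu_t.
\end{align*}
Both vector fields $\bw\mapsto|\bw|^{D-2}\bw$ and $\bw\mapsto\bw/\tilde F(\bw)$ are radial and have the form $h(|\bw|)\hat{\bw}$ with $h\ge 0$ nondecreasing ($\bw/\tilde F$ because of the monotonicity observed after \eqref{A-2}). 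For any two such radial, monotone maps one verifies the elementary inequality
\[
\bigl(h_1(|\bw|)\hat{\bw}-h_1(|\bw_\star|)\hat{\bw}_\star\bigr)\cdot\bigl(h_2(|\bw|)\hat{\bw}-h_2(|\bw_\star|)\hat{\bw}_\star\bigr)\ge (h_1(|\bw|)-h_1(|\bw_\star|))(h_2(|\bw|)-h_2(|\bw_\star|))\ge 0,
\]
using $\hat{\bw}\cdot\hat{\bw}_\star\le 1$ and $h_i\ge 0$. This makes the symmetrized integrand nonnegative and hence proves monotonicity of the $D$-th $\bw$-moment, giving (i).

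For (ii), I would test against $\psi(\bx,\bw)=|\bx|^{D}$ (again after truncation and limit passage). Since the $\bw$-alignment force vanishes under this test function, only the transport term contributes, yielding
\[
\frac{\di}{\di t}\int_{\mathbb{R}^{2d}}|\bx|^{D}\mu_t(\di\bx,\di\bw)=D\int_{\mathbb{R}^{2d}}|\bx|^{D-2}\bx\cdot\frac{\bw}{\tilde F}\,\mu_t(\di\bx,\di\bw).
\]
Using the relativistic velocity bound \eqref{A-3-2}, namely $|\bw/\tilde F|\le c$, followed by Hölder's inequality with $\|\mu_t\|=1$,
\[
\frac{\di}{\di t}\int|\bx|^{D}\mu_t\le cD\int|\bx|^{D-1}\mu_t\le cD\left(\int|\bx|^{D}\mu_t\right)^{\frac{D-1}{D}}.
\]
Setting $N(t):=\bigl(\int|\bx|^{D}\mu_t\bigr)^{1/D}$ reduces the differential inequality to $\tfrac{\di}{\di t}N(t)\le c$, and integration from $0$ to $t$ delivers exactly the bound (ii).

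The main subtlety is not the algebraic monotonicity identity (which is direct) but the use of unbounded test functions $|\bw|^{D}$ and $|\bx|^{D}$ in the weak formulation of Definition~\ref{D1.1}, whose class $C_c^1$ excludes them. I would handle this by introducing a smooth cutoff $\chi_R\in C_c^1(\mathbb{R}^{2d})$ equal to $1$ on $B_R$ and supported in $B_{2R}$, applying the weak formulation to $|\bx|^{D}\chi_R$ and $|\bw|^{D}\chi_R$, and then sending $R\to\infty$. The boundary (cutoff) contributions are controlled by the tail mass $\int_{|\bx|+|\bw|\ge R}(|\bx|^{D}+|\bw|^{D})\mu_t$, which vanishes as $R\to\infty$ thanks to the moment propagation established in Lemma \ref{L2.2} and the bootstrap for the $\bx$-moment (which is the purpose of this very lemma); hence the limit passage is carried out by a Grönwall-type iteration in $R$, first for $\bw$ and then for $\bx$. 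Once this justification is in place, the two estimates follow as above.
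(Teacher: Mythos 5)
Your proposal is correct and follows essentially the same route as the paper: test against $|\bw|^D$ (resp. $|\bx|^D$) after a cutoff, symmetrize, use Cauchy--Schwarz together with the monotonicity of $r \mapsto r^{D-1}$ and $r \mapsto r/\tilde F(r)$ to force a sign, and for (ii) use $|\bw/\tilde F|\le c$ plus H\"older and a substitution $N(t)=(\int|\bx|^D\mu_t)^{1/D}$. Your abstract inequality for two radial monotone vector fields $h_i(|\bw|)\hat{\bw}$ is a clean repackaging of the paper's term-by-term estimate $\langle |\bw|^{D-2}\bw-|\bw_\star|^{D-2}\bw_\star,\ \bw/\tilde F-\bw_\star/\tilde F_\star\rangle \ge (|\bw|^{D-1}-|\bw_\star|^{D-1})(|\bw|/\tilde F-|\bw_\star|/\tilde F_\star)$, and the truncation discussion matches the paper's.
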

\begin{proof}
It follows from the defining relation of measure-valued solution that for any $\psi\in C_c^1(\mathbb{R}^{2d})$, we have
	\begin{equation}\label{B-0-0}
		\dfrac{\di}{\di t}\int_{\mathbb{R}^{2d}}\psi \mu_t(\di \bx,\di \bw)=\int_{\mathbb{R}^{2d}}\left\{\frac{\bw}{{\tilde F}} \cdot\nabla_{\bx}\psi +\nabla_{\bw}\psi\cdot {\bf F}_a[\mu_t]\right\}\mu_t(\di\bx,\di \bw).
	\end{equation}
In the sequel, we use $C^{\infty}$ functions $\bw \mapsto |\bw|^D$ and $\bx \mapsto |\bx|^D$ as test functions, but they are not compactly supported. Hence, as it is, they cannot be used as test functions, but this can be done using a smooth cut-off function. More precisely, we introduce a smooth cut-off function \(\chi_R\) satisfying 
	\[
	\chi_R(\bz) = 
	\begin{cases} 
		1, & \text{for } \bz \in B_R, \\ 
		0, & \text{for } \bz\in \mathbb{R}^d \setminus B_{2R},
	\end{cases}
	\quad  |\nabla \chi_R| \leq \frac{2}{R}.
\]
Then, we substitute $\psi(\bw) = \chi_R(\bx) \chi_R(\bw) |\bw|^D$ into \eqref{B-0-0} and use  real analysis techniques and $R \to \infty$ to recover all the desired estimates as we want. Since this procedure is very standard, we omit its details. Thus, from now on, we use the maps $\bw \mapsto |\bw|^D$ and $\bx \mapsto |\bx|^D$ as test functions in \eqref{B-0-0}. 
	
\vspace{0.2cm}

\noindent (i)~We substitute $\psi(\bw) =|\bw|^D$  into \eqref{B-0-0} to get  the first desired estimate:
		\begin{align}
			\begin{aligned} \label{NNN-1}
		&\dfrac{\di}{\di t}\int_{\mathbb{R}^{2d}} |\bw|^D \mu_t(\di \bx,\di \bw)\\
		& \hspace{0.5cm} =-D\int_{\mathbb{R}^{4d}} |\bw|^{D-2} \Big \langle\bw,\frac{\bw}{{\tilde F}}-\frac{\bw_{\star}}{{\tilde F}_{\star}} \Big \rangle\mu_t(\di\bx_{\star},\di \bw_{\star})\mu_t(\di\bx,\di \bw)\\
			& \hspace{0.5cm} =-\frac{D}{2}\int_{\mathbb{R}^{4d}} \Big \langle |\bw|^{D-2}\bw-|\bw_{\star}|^{D-2}\bw_{\star},\frac{\bw}{{\tilde F}}-\frac{\bw_{\star}}{{\tilde F}_{\star}} \Big \rangle\mu_t(\di \bx_{\star}, \di \bw_{\star})\mu_t(\di\bx,\di \bw)\\
			& \hspace{0.5cm} \le-\frac{D}{2}\int_{\mathbb{R}^{4d}} \Big( \frac{|\bw|^{D}}{{\tilde F}}+\frac{|\bw_{\star}|^{D}}{{\tilde F}_{\star}}- |\bw_{\star}|^{D-1}\frac{|\bw|}{{\tilde F}}-|\bw|^{D-1}\frac{|\bw_{\star}|}{{\tilde F}_{\star}} \Big) \mu_t(\di\bx_{\star},\di \bw_{\star})\mu_t(\di\bx,\di \bw)\\
			& \hspace{0.5cm} =-\frac{D}{2}\int_{\mathbb{R}^{4d}}(|\bw|^{D-1}-|\bw_{\star}|^{D-1})\left(\frac{|\bw|}{{\tilde F}}-\frac{|\bw_{\star}|}{{\tilde F}_{\star}}\right)\mu_t(\di\bx_{\star},\di \bw_{\star})\mu_t(\di\bx,\di \bw)\\
			& \hspace{0.5cm} \le 0,
			\end{aligned}
	\end{align}
where we use the increasing property of $\frac{|\bw|}{{\tilde F}}$ in the last inequality of \eqref{NNN-1}:
\[
(|\bw|^{D-1}-|\bw_{\star}|^{D-1})\left(\frac{|\bw|}{{\tilde F}}-\frac{|\bw_{\star}|}{{\tilde F}_{\star}}\right) \ge 0.
\]
\vspace{0.2cm}

\noindent (ii)~We use $\eqref{RKCS}_1$, the H\"{o}lder inequality and $ \Big| \frac{\bw}{\tilde{F}} \Big| \leq c$ to get 
		\begin{align*}
		\begin{aligned}
		& \dfrac{\di}{\di t}\int_{\mathbb{R}^{2d}} |\bx|^D \mu_t(\di \bx, \di \bw) =D\int_{\mathbb{R}^{4d}} |\bx|^{D-2} \Big \langle \bx, \frac{\bw}{\tilde{F}} \Big \rangle \mu_t(\di \bx, \di \bw) \le c D\int_{\mathbb{R}^{4d}} | \bx|^{D-1}  \mu_t(\di \bx, \di \bw) \\
		& \hspace{0.5cm} \le cD\left(\int_{\mathbb{R}^{4d}} |\bx|^{D} \mu_t(\di \bx, \di \bw) \right)^{\frac{D-1}{D}}\left(\int_{\mathbb{R}^{4d}}  \mu_t(\di \bx, \di \bw) \right)^{\frac{1}{D}} =  cD\left(\int_{\mathbb{R}^{4d}} |\bx|^{D} \mu_t(\di \bx, \di \bw) \right)^{\frac{D-1}{D}}.
		\end{aligned}
	\end{align*}
This implies the desired second assertion:
\[
\Big( \int_{\mathbb{R}^{4d}} |\bx|^{D} \mu_t(\di \bx, \di \bw) \Big)^{\frac{1}{D}} \leq \Big( \int_{\mathbb{R}^{4d}} |\bx|^{D} \mu_0(\di \bx, \di \bw) \Big)^{\frac{1}{D}} + ct,
\]	
i.e.,
\[
 \int_{\mathbb{R}^{4d}} |\bx|^{D} \mu_t(\di \bx, \di \bw) \leq \Big[     \Big( \int_{\mathbb{R}^{4d}} |\bx|^{D} \mu_0(\di \bx, \di \bw) \Big)^{\frac{1}{D}} + ct \Big]^D.                                
\]
\end{proof}
For notational simplicity, we use the following abbreviated notation in what follows:
\begin{align*}
\begin{aligned}
& \int_{|\bx| \geq R_x} (\cdots) \mu_t(\di \bx, \di \bw)  := \int_{\bbr^d}  \int_{B^c_{R_x}} (\cdots) \mu_t(\di \bx, \di \bw), \\
&  \int_{|\bw| \geq R_w} (\cdots) \mu_t(\di \bx, \di \bw)  := \int_{\bbr^d} \int_{ B^c_{R_w}} (\cdots) \mu_t(\di \bx, \di \bw).
\end{aligned}
\end{align*}

As a direct application of Lemma \ref{L2.4}, we have estimates on spatial and velocity moments for large position and velocity.
\begin{corollary} \label{C2.1}
For given constants $D \in {\mathbb N}$ and $T \in (0, \infty]$, let $\mu \in L^{\infty}([0,T);\mathcal{P}_D(\mathbb{R}^{2d}))$ be a measure-valued solution to \eqref{RKCS} with the initial datum $\mu_0$ satisfying
\[
 \int_{\mathbb{R}^{2d}}  \mu_0(\di \bx,\di \bw) = 1, \quad {\mathcal M}_p(\mu_0, D) < \infty,
\]
and let $R_x(t)$ and $R_w(t)$ be nonnegative locally bounded functions. Then, the following estimates hold:~for $t \geq 0$,
\begin{align}
\begin{aligned} \label{B-0-1}
& (i)~ \int_{|\bw| \geq R_{w}(t)} \mu_t(\di \bx, \di \bw)  \le \frac{\mathcal{M}_{p}(\mu_0,D)}{R_w(t)^D}. \\
& (ii)~ \int_{|\bx| \geq R_{x}(t)} \mu_t(\di \bx, \di \bw) \le \frac{   2^{D-1} \max \Big \{  {\mathcal M}_p(\mu_0, D), c^D \Big\} (1 + t)^D }{R_x(t)^D}.
\end{aligned}
\end{align}
\end{corollary}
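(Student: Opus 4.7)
The plan is to derive both tail estimates as direct consequences of Chebyshev's inequality combined with the moment bounds already established in Lemma \ref{L2.4}. The corollary is essentially a packaging step rather than a substantive new argument, so the work reduces to a clean application of a Markov-type tail bound followed by an elementary power inequality for (ii).

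For the first estimate, I would write
\[
\int_{|\bw|\geq R_w(t)} \mu_t(\di\bx,\di\bw) \leq \int_{|\bw|\geq R_w(t)} \frac{|\bw|^D}{R_w(t)^D}\mu_t(\di\bx,\di\bw) \leq \frac{1}{R_w(t)^D}\int_{\mathbb{R}^{2d}}|\bw|^D\mu_t(\di\bx,\di\bw),
\]
and then invoke Lemma \ref{L2.4}(i) to bound the right-hand side by $R_w(t)^{-D}\int_{\mathbb{R}^{2d}}|\bw|^D\mu_0(\di\bx,\di\bw)$, which is in turn dominated by $R_w(t)^{-D}\mathcal{M}_p(\mu_0,D)$ since the spatial integrand $|\bx|^D$ in $\mathcal{M}_p$ is nonnegative.

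For the second estimate, the same Chebyshev step gives
\[
\int_{|\bx|\geq R_x(t)} \mu_t(\di\bx,\di\bw) \leq \frac{1}{R_x(t)^D}\int_{\mathbb{R}^{2d}}|\bx|^D\mu_t(\di\bx,\di\bw),
\]
and Lemma \ref{L2.4}(ii) bounds the $\bx$-moment by $\bigl[(\int|\bx|^D\mu_0)^{1/D}+ct\bigr]^D$. I would then apply the elementary convexity inequality $(a+b)^D \leq 2^{D-1}(a^D+b^D)$ for $a,b\geq 0$ to get
\[
\Bigl[\Bigl(\textstyle\int|\bx|^D\mu_0\Bigr)^{1/D}+ct\Bigr]^D \leq 2^{D-1}\Bigl(\textstyle\int|\bx|^D\mu_0 + c^D t^D\Bigr) \leq 2^{D-1}\max\{\mathcal{M}_p(\mu_0,D),c^D\}(1+t^D),
\]
and finally use $1+t^D \leq (1+t)^D$ (valid for $D\geq 1$, $t\geq 0$, by the binomial expansion) to reach the stated form.

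There is no real obstacle here: the only points that require care are (a) recognizing that $\int|\bw|^D\mu_0 \leq \mathcal{M}_p(\mu_0,D)$ because the $|\bx|^D$ contribution to $\mathcal{M}_p$ is nonnegative, and (b) choosing the correct elementary inequality to absorb the sum $(\cdots)^{1/D}+ct$ into the clean product $\max\{\mathcal{M}_p(\mu_0,D),c^D\}(1+t)^D$. Both steps are routine, so the proof should be very short.
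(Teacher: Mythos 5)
Your proof is correct and follows essentially the same route as the paper's: Chebyshev's inequality applied to the $D$-th moment, Lemma~\ref{L2.4} for the propagation of moments, the convexity inequality $(a+b)^D \le 2^{D-1}(a^D+b^D)$, and then $1+t^D \le (1+t)^D$. You even correctly replace the paper's equality $\int_{\mathbb{R}^{2d}}|\bw|^D\mu_0 = \mathcal{M}_p(\mu_0,D)$ by the inequality $\le$, which is the precise statement since $\mathcal{M}_p$ also carries the nonnegative $|\bx|^D$ contribution.
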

\begin{proof}
Let $R_x$ and $R_w$ be arbitrary time-dependent functions in $t$. Then, it follows from Lemma \ref{L2.4} that 
\begin{align}\label{B-0-1-1}
\begin{aligned}
& R_w(t)^D \int_{|\bw| \geq R_w(t)} \mu_t(\di \bx, \di \bw)   \le \int_{\mathbb{R}^{2d}} |\bw|^D \mu_t(\di \bx, \di \bw) \leq \int_{\mathbb{R}^{2d}} |\bw|^{D}\mu_0( \di \bx,\di \bw) = \mathcal{M}_{p}(\mu_0, D),\\
& R_x(t)^D  \int_{|\bx| \geq R_x(t)}  \mu_t(\di \bx, \di \bw)   \di z \leq  \int_{|\bx| \geq R_x(t)} |\bx|^D  \mu_t(\di \bx, \di \bw) \leq  \int_{\bbr^{2d}} |\bx|^D  \mu_t(\di \bx, \di \bw)    \\
& \hspace{0.5cm}  \le \Big[ \left(\int_{\mathbb{R}^{2d}}|\bx|^{D}\mu_0( \di \bx,\di \bw)\right)^{\frac{1}{D}}+ ct \Big ]^{D} \le  2^{D-1} \Big[ \int_{\mathbb{R}^{2d}}|\bx|^{D}\mu_0( \di \bx,\di \bw) + c^D t^D \Big ] \\
& \hspace{0.5cm} \leq 2^{D-1} \max \Big \{  {\mathcal M}_p(\mu_0, D), c^D\Big \} (1 + t)^D,
\end{aligned}
\end{align}
where we used the inequality:
\[ (|a| + |b|)^{D} \leq 2^{D-1} (|a|^D + |b|^D),~~D \geq 1. \]
The above estimates in \eqref{B-0-1-1} yield the desired estimates. 
\end{proof}
Next, we provide the estimate of exponential moments of the solution to system \eqref{RKCS}.
\begin{lemma} \label{L2.5}
For $\alpha > 0$ and $T \in (0, \infty]$, let $\mu \in L^{\infty}([0,T);\mathcal{P}(\mathbb{R}^{2d}))$ be a measure-valued solution to \eqref{RKCS} with the initial datum $\mu_0$ satisfying
\[  \int_{\mathbb{R}^{2d}}  \mu_0(\di \bx,\di \bw) = 1, \quad  {\mathcal M}_e(\mu_0, \alpha) < \infty. \]
Then, the following estimates hold:~for $t \geq 0$,
\begin{align*}
\begin{aligned}
& (i)~\int_{\mathbb{R}^{2d}}e^{\alpha |\bw|} \mu_t(\di \bx,\di \bw) \le  \int_{\mathbb{R}^{2d}}e^{\alpha |\bw|} \mu_0(\di \bx,\di \bw) . \\
& (ii)~\int_{\mathbb{R}^{2d}}e^{\alpha |\bx|} \mu_t(\di \bx,\di \bw) \le \Big( \int_{\mathbb{R}^{2d}}e^{\alpha |\bx|} \mu_0(\di \bx,\di \bw) \Big) e^{c\alpha t}.
\end{aligned}
\end{align*}
\end{lemma}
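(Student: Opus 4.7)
The plan is to mimic closely the proof of Lemma \ref{L2.4}, replacing the polynomial test functions $|\bw|^D$ and $|\bx|^D$ with the exponentials $e^{\alpha|\bw|}$ and $e^{\alpha|\bx|}$. As in Lemma \ref{L2.4}, these functions are not compactly supported, so strictly speaking I would first multiply by the cut-off $\chi_R(\bx)\chi_R(\bw)$, use the weak formulation \eqref{B-0-0}, and then pass $R\to\infty$ via dominated convergence, justified by the assumed finiteness of ${\mathcal M}_e(\mu_0,\alpha)$ together with the a priori control I am about to derive. Since this cut-off step is exactly the same as in Lemma \ref{L2.4}, I would suppress it and treat $e^{\alpha|\bw|}$ and $e^{\alpha|\bx|}$ directly as admissible test functions.

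For assertion (i), I plug $\psi(\bw)=e^{\alpha|\bw|}$ into \eqref{B-0-0}. Noting that $\nabla_{\bw}e^{\alpha|\bw|}=\alpha e^{\alpha|\bw|}\tfrac{\bw}{|\bw|}$, symmetrizing in $(\bx,\bw)\leftrightarrow(\bx_\star,\bw_\star)$ gives
\begin{align*}
\frac{\di}{\di t}\int_{\mathbb{R}^{2d}} e^{\alpha|\bw|}\mu_t = -\frac{\alpha}{2}\int_{\mathbb{R}^{4d}} \phi(|\bx-\bx_\star|)\Big\langle e^{\alpha|\bw|}\tfrac{\bw}{|\bw|}-e^{\alpha|\bw_\star|}\tfrac{\bw_\star}{|\bw_\star|},\,\tfrac{\bw}{\tilde F}-\tfrac{\bw_\star}{\tilde F_\star}\Big\rangle\mu_t\mu_t.
\end{align*}
Expanding the inner product and applying Cauchy--Schwarz to the cross terms (i.e.\ $\bw\cdot\bw_\star\le|\bw||\bw_\star|$) bounds the integrand from below by
\[
\bigl(e^{\alpha|\bw|}-e^{\alpha|\bw_\star|}\bigr)\left(\tfrac{|\bw|}{\tilde F}-\tfrac{|\bw_\star|}{\tilde F_\star}\right),
\]
which is nonnegative because $s\mapsto e^{\alpha s}$ is increasing and $|\bw|\mapsto|\bw|/\tilde F(|\bw|)=G^{-1}(|\bw|)$ is increasing by \eqref{A-2}. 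Hence the derivative is $\le 0$, which gives the first inequality.

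For assertion (ii), I take $\psi(\bx)=e^{\alpha|\bx|}$ in \eqref{B-0-0}; only the transport term contributes, giving
\[
\frac{\di}{\di t}\int_{\mathbb{R}^{2d}} e^{\alpha|\bx|}\mu_t = \alpha\int_{\mathbb{R}^{2d}} e^{\alpha|\bx|}\,\tfrac{\bx}{|\bx|}\cdot\tfrac{\bw}{\tilde F}\,\mu_t \le c\alpha\int_{\mathbb{R}^{2d}} e^{\alpha|\bx|}\mu_t,
\]
where I used the uniform bound $|\bw/\tilde F|\le c$ from \eqref{A-3-2}. A standard Gr\"onwall argument then yields $\int e^{\alpha|\bx|}\mu_t\le e^{c\alpha t}\int e^{\alpha|\bx|}\mu_0$, which is the second claim.

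The only nonroutine point I anticipate is the cut-off/approximation step needed to legitimately use the unbounded test functions $e^{\alpha|\bw|}$ and $e^{\alpha|\bx|}$ in the weak formulation. The monotonicity of $|\bw|/\tilde F$ and the speed-of-light bound $|\bw/\tilde F|\le c$ do all the algebraic work; once the cut-off is handled exactly as in Lemma \ref{L2.4}, the rest is a direct symmetrization plus Gr\"onwall estimate.
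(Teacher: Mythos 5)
Your argument is essentially identical to the paper's: plug $e^{\alpha|\bw|}$ (respectively $e^{\alpha|\bx|}$) into the weak formulation \eqref{B-0-0}, symmetrize, and bound the inner product below via Cauchy--Schwarz to reduce to the nonnegative product $(e^{\alpha|\bw_\star|}-e^{\alpha|\bw|})\bigl(\tfrac{|\bw_\star|}{\tilde F_\star}-\tfrac{|\bw|}{\tilde F}\bigr)$ using the monotonicity of $|\bw|/\tilde F = G^{-1}(|\bw|)$, then for (ii) use $|\bw/\tilde F|\le c$ and Gr\"onwall. The cut-off remark and the monotonicity justification match what the paper does, so this is the same proof.
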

\begin{proof} Similar to the proof of Lemma \ref{L2.4}, we derive:
\begin{align}
\begin{aligned} \label{B-0-2}
&\dfrac{\di}{\di t} \int_{\mathbb{R}^{2d}}e^{\alpha |\bw|} \mu_t(\di \bx,\di \bw) \\
& \hspace{0.5cm} = \alpha \int_{\bbr^{2d}}e^{\alpha |\bw|} \Big \langle \frac{\bw}{|\bw|}, {\bf F}_a[\mu_t]  \Big \rangle \mu_t(\di \bx,\di \bw)   \\
&  \hspace{0.5cm} = \alpha   \int_{\bbr^{4d}} e^{\alpha |\bw|} \phi(|\bx_\star - \bx|)  \Big \langle \frac{\bw}{|\bw|}, \frac{\bw_\star}{{\tilde F}_\star} - \frac{\bw}{{\tilde F}}  \Big \rangle \mu_t(\di \bx_\star,\di \bw_\star) \mu_t(\di \bx,\di \bw)  \\
& \hspace{0.5cm} =-\frac{\alpha}{2}   \int_{\bbr^{4d}} \phi(|\bx_\star - \bx|)  \Big \langle e^{\alpha |\bw_\star|}\frac{\bw_{\star}}{|\bw_{\star}|} -e^{\alpha |\bw|} \frac{\bw}{|\bw|} , \frac{\bw_\star}{{\tilde F}_\star} - \frac{\bw}{{\tilde F}}  \Big \rangle \mu_t(\di \bx_\star,\di \bw_\star) \mu_t(\di \bx,\di \bw) \\
& \hspace{0.5cm} \le0,
\end{aligned}
\end{align}
where we use rough estimates:
\[  \Big \langle e^{\alpha |\bw_\star|}\frac{\bw_{\star}}{|\bw_{\star}|} -e^{\alpha |\bw|} \frac{\bw}{|\bw|} , \frac{\bw_\star}{{\tilde F}_\star} - \frac{\bw}{{\tilde F}}  \Big \rangle\geq\left(e^{\alpha |\bw_\star|}-e^{\alpha |\bw|}\right)\left( \frac{|\bw_\star|}{{\tilde F}_\star}- \frac{|\bw|}{{\tilde F}}\right)\ge0.
\]
Then, we apply Grönwall's lemma to get the desired estimate.  \newline

\noindent (ii)~Similar to (i), we use $\psi = e^{\alpha |\bx|}$ as a test function in \eqref{B-0-0} to see
\[ 
\frac{\di}{\di t} \int_{\mathbb{R}^{2d}}e^{\alpha |\bx|} \mu_t(\di \bx,\di \bw) = \alpha \int_{\mathbb{R}^{2d}} e^{\alpha |\bx|}  \Big \langle \frac{\bw}{{\tilde F}}, \frac{\bx}{|\bx|} \Big \rangle  \mu_t(\di\bx,\di \bw) \leq c \alpha  \int_{\mathbb{R}^{2d}} e^{\alpha |\bx|} \mu_t(\di\bx,\di \bw).
\]
This yields the desired estimate. 
\end{proof}
As a direct corollary of Lemma \ref{L2.5}, we have the following estimate.
\begin{corollary} \label{C2.2}
Let $\mu \in L^{\infty}([0,T);\mathcal{P}(\mathbb{R}^{2d}))$ be a measure-valued solution to \eqref{RKCS} with the initial datum $\mu_0$ satisfying 
\[
 \int_{\mathbb{R}^{2d}}  \mu_0(\di \bx,\di \bw) = 1, \quad {\mathcal M}_e(\mu_0, \alpha)  < \infty,
\]
and let $R_x(t)$ and $R_w(t)$ be arbitrary nonnegative locally bounded functions. Then, the following estimates hold:~for $t \geq 0$, 
\begin{align}
	\begin{aligned} \label{B-0-0-1}
		& (i)~ \int_{|\bw_{\star}|\ge R_w(t)} \mu_t(\di \bx_{\star},\di \bw_{\star}) \le \mathcal{M}_{e}(\mu_0, \alpha) e^{ -\alpha R_w(t)}. \\
		& (ii)~ \int_{|\bx_{\star}|\ge R_x(t)} \mu_t(\di \bx_{\star},\di \bw_{\star}) \le \mathcal{M}_{e}(\mu_0, \alpha) e^{\alpha (ct-R_x(t))}.
	\end{aligned}
\end{align}\[  \]
\end{corollary}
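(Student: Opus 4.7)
The plan is to obtain both tail bounds as direct consequences of Lemma~\ref{L2.5} via a Chebyshev/Markov-type inequality applied to the exponential weights $e^{\alpha|\bw|}$ and $e^{\alpha|\bx|}$. The key observation is that the definition ${\mathcal M}_e(\mu_0,\alpha)=\int_{\bbr^{2d}} e^{\alpha(|\bx|+|\bw|)}\mu_0(\di\bx,\di\bw)$ dominates both marginal exponential moments separately, since $e^{\alpha|\bx|}\le e^{\alpha(|\bx|+|\bw|)}$ and $e^{\alpha|\bw|}\le e^{\alpha(|\bx|+|\bw|)}$.

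For (i), I would restrict the integral to the tail set $\{|\bw_\star|\ge R_w(t)\}$, on which $e^{\alpha|\bw_\star|}\ge e^{\alpha R_w(t)}$ pointwise, so that
\[
e^{\alpha R_w(t)}\int_{|\bw_\star|\ge R_w(t)}\mu_t(\di\bx_\star,\di\bw_\star)\le\int_{\bbr^{2d}}e^{\alpha|\bw|}\mu_t(\di\bx,\di\bw).
\]
Then Lemma~\ref{L2.5}(i) bounds the right-hand side by $\int_{\bbr^{2d}}e^{\alpha|\bw|}\mu_0\le {\mathcal M}_e(\mu_0,\alpha)$, which after dividing by $e^{\alpha R_w(t)}$ yields the first assertion.

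For (ii), the same Chebyshev trick with the weight $e^{\alpha|\bx|}$ gives
\[
e^{\alpha R_x(t)}\int_{|\bx_\star|\ge R_x(t)}\mu_t(\di\bx_\star,\di\bw_\star)\le\int_{\bbr^{2d}}e^{\alpha|\bx|}\mu_t(\di\bx,\di\bw),
\]
and this time Lemma~\ref{L2.5}(ii) produces the additional factor $e^{c\alpha t}$ from the propagation of the spatial exponential moment along the RKCS flow, since the transport speed is bounded by $c$ via $|\bw/\tilde F|\le c$. Combining these two bounds and again using $\int e^{\alpha|\bx|}\mu_0\le {\mathcal M}_e(\mu_0,\alpha)$ produces the factor $e^{\alpha(ct-R_x(t))}{\mathcal M}_e(\mu_0,\alpha)$.

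There is essentially no obstacle beyond invoking Lemma~\ref{L2.5}; the main subtlety to double-check is the domination of the marginal exponential moments by ${\mathcal M}_e(\mu_0,\alpha)$, which is immediate from the factorization $e^{\alpha(|\bx|+|\bw|)}=e^{\alpha|\bx|}e^{\alpha|\bw|}$ and the nonnegativity of the exponential. The resulting estimates are the exponential analogues of the polynomial tail bounds in Corollary~\ref{C2.1}, and they will be used later in Section~\ref{sec:5} to control contributions of trajectories outside the effective domain in the exponentially decaying case.
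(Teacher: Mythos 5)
Your argument matches the paper's proof exactly: both assertions follow from the Chebyshev-type bound $e^{\alpha R}\int_{\{|\cdot|\ge R\}}\mu_t \le \int e^{\alpha|\cdot|}\mu_t$ combined with Lemma~\ref{L2.5} and the pointwise domination $e^{\alpha|\bx|},e^{\alpha|\bw|}\le e^{\alpha(|\bx|+|\bw|)}$. No gaps or deviations.
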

\begin{proof} 
	It follows from the first estimate in Lemma \ref{L2.5} that 
	\begin{align*}
		\begin{aligned}
			& e^{\alpha R_w(t)}\int_{|\bw_{\star}|\ge R_w(t)} \mu_t(\di \bx_{\star},\di \bw_{\star}) \\
			& \hspace{1cm} \leq \int_{\mathbb{R}^{2d}}e^{\alpha |\bw|} \mu_t(\di \bx,\di \bw) \le \int_{\mathbb{R}^{2d}}e^{\alpha |\bw|} \mu_0(\di \bx,\di \bw)  \leq  \mathcal{M}_{e}(\mu_0, \alpha).
		\end{aligned}
	\end{align*}
	Next, it follows from the second estimate in Lemma \ref{L2.5} that 
\begin{align*}
\begin{aligned}
& e^{\alpha R_x(t)}\int_{|\bx_{\star}|\ge R_x(t)} \mu_t(\di \bx_{\star},\di \bw_{\star}) \\
& \hspace{1cm} \leq \int_{\mathbb{R}^{2d}}e^{\alpha |\bx|} \mu_t(\di \bx,\di \bw) \le \Big( \int_{\mathbb{R}^{2d}}e^{\alpha |\bx|} \mu_0(\di \bx,\di \bw) \Big) e^{c\alpha t} \leq  \mathcal{M}_{e}(\mu_0, \alpha) e^{c\alpha t}.
\end{aligned}
\end{align*}
This implies the desired estimate.
\end{proof}		 
Now, we set the radius of the projection onto $\bbr_{\bw}^d$ for the support of the initial measure $\mu_0$ as $R_{\bw}(0)$:		
		\[
		R_{\bw}(0):=\left\{\sup |\bw|:~(\bx, \bw) \in {\rm supp} \mu_0 \right\} <\infty.
		\]
\begin{proposition}
\emph{\cite{Ha2021,w7}}
\begin{enumerate}
\item
Suppose that there exist positive constants \(\delta\) and \(D_{\bx}^{\infty}\) such that
		\[
\phi(D_{\bx}^{\infty}) = \sqrt{1 + \frac{ D_{\bw}^2}{c^2}}\left( \frac{4 D_{\bw}^2}{c^2} + \delta\right),  \]
and diameters of position and velocity supports for initial datum satisfy 
		\[
		D_{\bx}(0) < D_{\bx}^{\infty}, \quad (D_{\bx}^{\infty} - D_{\bx}(0))\delta > \left(1 + \frac{2c D_{\bw}^2}{2} \right) D_{\bw}(0),
		\]
and let $\mu\in L^\infty([0,T);\mathcal{P}(\mathbb{R}^{2d}))$ be a measure-valued solution to \eqref{RKCS}. Then, the following assertions hold. \newline
\begin{enumerate}
\item
Strong flocking emerges exponentially fast: there exists a positive constant $\delta>0$ such that
		\[
		\sup_{0 \leq t < \infty} D_{\bx}(t) < D_{\bx}^{\infty}, \quad D_{\bw}(t) \leq D_{\bw}(0)e^{-\delta t}, \quad t \geq 0.\]
\item		
Velocity support is uniformly bounded:
		\[\sup\limits_{0\le t<\infty}\left\{\sup |\bw| \, ~\Big| ~\, \exists~\bx \in \mathbb{R}^d \text{ such that } f(t,\bx, \bw) \neq 0\right\}\le R_{\bw}(0).\]
\end{enumerate}
\vspace{0.1cm}
\item
Let $\mu, \nu \in L^\infty([0,T);\mathcal{P}(\mathbb{R}^{2d}))$ be measure-valued solutions to \eqref{RKCS} corresponding to initial measures $\mu_0,\nu_0\in\mathcal{P}_p(\mathbb{R}^{d})$, respectively. Then, for any $T>0$, there exists some positive constant $C = C(c, \beta, p, T)$ such that
		\begin{equation*}
		\sup_{0 \leq t < T}W_p(\mu_t, \nu_t)\le C W_p(\mu_0,\nu_0).
		\end{equation*}
\end{enumerate}		
	\end{proposition}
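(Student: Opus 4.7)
The proposition has two essentially independent parts, and my plan is to handle them with different tools: part (1) via a characteristic-based bootstrap on the diameters of the support, and part (2) via a coupling-plus-Gronwall argument at the level of the Wasserstein distance.

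For part (1)(a), the plan is to lift \eqref{RKCS} to its characteristic ODE system, $\dot X = W/\tilde F(W)$ and $\dot W = \int \phi(|X-X_\star|)(W_\star/\tilde F_\star - W/\tilde F)\,d\mu_0$, and then trace any two characteristics $(X,W)$ and $(X',W')$ that start in $\mathrm{supp}\,\mu_0$. Expanding $\frac{d}{dt}|W-W'|^2$, splitting the difference of integrals into contributions of a third characteristic $(X_\star,W_\star)$, and invoking the dissipation inequality of Lemma \ref{L2.3} with $R=R_{\bw}(0)$, I expect to derive
\begin{equation*}
\frac{d}{dt}D_{\bw}(t)\le -\phi\bigl(D_{\bx}(t)\bigr)\Lambda(R_{\bw}(0))\,D_{\bw}(t),\qquad \frac{d}{dt}D_{\bx}(t)\le D_{\bw}(t),
\end{equation*}
where the second inequality uses the companion bound in Lemma \ref{L2.3}. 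A continuity/bootstrap argument then closes: as long as $D_{\bx}(t)<D_{\bx}^\infty$ one has exponential decay $D_{\bw}(t)\le D_{\bw}(0)e^{-\delta t}$ with $\delta=\phi(D_{\bx}^\infty)\Lambda(R_{\bw}(0))$, whence integration gives $D_{\bx}(t)\le D_{\bx}(0)+D_{\bw}(0)/\delta$; the admissibility hypothesis is tailored exactly to make this upper bound strictly smaller than $D_{\bx}^\infty$. For part (1)(b), I would read the velocity-support bound off the $\bw$-characteristic: at any point $(X_t,W_t)$ where $|W|$ attains its running supremum, the monotonicity of $\bw\mapsto|\bw|/\tilde F(|\bw|)$, already exploited in \eqref{NNN-1}, forces $\frac{d}{dt}\tfrac{1}{2}|W|^2\le 0$.

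For part (2), the plan is to couple optimally at time $0$ and transport along characteristics. Pick $\pi_0\in\Pi(\mu_0,\nu_0)$ realizing $W_p(\mu_0,\nu_0)$, and let $(X_t^\mu,W_t^\mu)$ and $(X_t^\nu,W_t^\nu)$ denote the characteristic flows driven by $\mu_t$ and $\nu_t$, respectively, with a common initial datum sampled from $\pi_0$. The push-forward $\pi_t$ lies in $\Pi(\mu_t,\nu_t)$, so
\begin{equation*}
W_p(\mu_t,\nu_t)^p\le \int_{\mathbb R^{4d}}\bigl(|X_t^\mu-X_t^\nu|^p+|W_t^\mu-W_t^\nu|^p\bigr)\,d\pi_0.
\end{equation*}
Differentiating in $t$ and combining (i) the $1$-Lipschitz bound on $\bw\mapsto \bw/\tilde F(\bw)$ from Lemma \ref{L2.3} with (ii) a Lipschitz-in-$W_p$ estimate for the nonlocal force of the form $\|{\bf F}_a[\mu]-{\bf F}_a[\nu]\|_\infty\lesssim W_p(\mu,\nu)$, which follows from $\phi\in W^{1,\infty}$ and the propagated moment bounds of Lemma \ref{L2.4}, I close a differential inequality for the right-hand side and apply Gronwall on $[0,T]$ to obtain the stated constant $C(c,\beta,p,T)$.

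The step I expect to be the main obstacle is the Lipschitz-in-$W_p$ control of ${\bf F}_a$ in part (2), since the integrand contains the relativistic vector field $\bw/\tilde F$, which is only locally Lipschitz and not globally bounded in $\bw$. Handling it cleanly will require combining the a priori speed bound $|\bw/\tilde F|\le c$ from \eqref{A-3-2} with the propagated $p$-th moment bounds of Lemma \ref{L2.4}, which is precisely the source of the $T$-dependence in the constant $C$.
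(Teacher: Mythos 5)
The paper itself offers no proof of this proposition: it is imported verbatim from \cite{Ha2021,w7} (with a pointer in the subsequent remark to \cite{w7} for the well-posedness/stability details), so there is nothing internal to compare your argument against. Judged on its own merits, your plan for part (1)(b) and part (2) is sound in spirit: the maximum-principle argument for the $\bw$-support, using the monotonicity of $s\mapsto s/\tilde F(s)$ exactly as in \eqref{NNN-1}, works, and the optimal-coupling-at-time-zero plus Gronwall scheme is the standard route to the $W_p$-stability in (2), with the difficulty you flag (Lipschitz control of ${\bf F}_a$ in $W_p$) being precisely the technical crux.

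The gap is in part (1)(a). Your proposed system $\frac{d}{dt}D_{\bw}\le-\phi(D_{\bx})\Lambda(R_{\bw}(0))D_{\bw}$, $\frac{d}{dt}D_{\bx}\le D_{\bw}$ is the \emph{classical} CS dissipation structure and cannot be reached directly here, because the alignment force acts through the nonlinear quantity $\bw/\tilde F(\bw)$ rather than through $\bw$ itself. When you subtract the forces on two characteristics $(X,W)$ and $(X',W')$ coupled to the same third point $(\bx_\star,\bw_\star)$, the term that pairs dissipatively with $W-W'$ is $\frac{W'}{\tilde F'}-\frac{W}{\tilde F}$, but because $\phi$ is position-dependent the contributions from $\frac{\bw_\star}{\tilde F_\star}$ do \emph{not} cancel. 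The leftover is a residual that can be bounded only up to a factor scaling like $D_{\bw}^2/c^2$ (the size of the relativistic deviation of $\bw/\tilde F$ from $\bw$), so the correct SDDI is of the form $\frac{d}{dt}D_{\bw}\le -\bigl(\phi(D_{\bx})\,a(D_{\bw})-b(D_{\bw})\bigr)D_{\bw}$ with a genuinely competing bad term. This is exactly what the hypothesis $\phi(D_{\bx}^\infty)=\sqrt{1+D_{\bw}^2/c^2}\bigl(4D_{\bw}^2/c^2+\delta\bigr)$ is encoding: $\delta$ is the \emph{net} decay rate after subtracting that residual, not the product $\phi(D_{\bx}^\infty)\Lambda(R_{\bw}(0))$ your bootstrap would produce. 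As written, your argument never sees the $4D_{\bw}^2/c^2$ term, so it cannot explain (or even use) the stated admissibility condition, and the bootstrap closure is being claimed with the wrong $\delta$. A secondary, smaller issue: the "$1$-Lipschitz companion bound" of Lemma \ref{L2.3} is stated for $\bx\mapsto\bx/\sqrt{1+|\bx|^2/c^2}$, which is not literally $\bw/\tilde F(\bw)$ (indeed $\tilde F\ne\sqrt{1+|\bw|^2/c^2}$ away from the classical limit); the Lipschitz control of the transport velocity needs to be argued for the actual map, not borrowed from that display.
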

	\begin{remark} 
The existence, uniqueness stability results of non-compact solutions to system \eqref{RKCS} can be derived from standard approximation technique and $|\frac{\bw}{{\tilde F}}|\le c.$ More details, we refer to \cite{w7}.
	\end{remark}
For $(\bx, \bw) \in \bbr^{2d}$, we define the forward particle trajectory (or bi-characteristic), simply characteristics $(\bx(t), \bw(t)) = (\bx(t,0,\bx,\bw), \bw(t,0,\bx,\bw))$ as the solution for the following Cauchy problem:
\begin{equation} \label{RKCS-3}
			\begin{cases}
			\displaystyle \dot \bx(t)=\frac{\bw(t)}{{\tilde F}(t)}, \quad t > 0, \\
			\displaystyle \dot \bw(t)={\bf F}_a[\mu_t], \\
			\displaystyle (\bx(0), \bw(0)) = (\bx, \bw).
			\end{cases}
\end{equation}
Since ${\bf F}_a[\mu]$ is continuous in $t$, locally Lipschitz continuous in $(\bx,\bw)$ and sub-linear in $\bx, \bw$, the Cauchy problem \eqref{RKCS-3} admits a globally well-posedness, and $(\bx(t), \bw(t))$ provides globally well-defined homeomorphism for each fixed time $t$. Moreover, it satisfies 		
\begin{equation} \label{RKCS-4}
			\int_{\mathbb{R}^{2d}}\psi(\bx,\bw)\mu_t(\di \bx,\di \bw)=\int_{\mathbb{R}^{2d}}\psi(\bx(t), \bw(t))\mu_0(\di \bx,\di \bw), \quad \forall \ \psi\in C_b^1(\mathbb{R}^{2d}).
\end{equation}	
Before we move on to the brief summary of our main results in the next section, we state a Grönwall-type lemma to be used in a later section.
\begin{lemma} \label{L2.6}
\emph{\cite{bi-cluster2}}
Let $y: \bbr \to [0, \infty)$ be a differentiable function satisfying the following differential inequality:
\[ y^{\prime}(t) \leq - p(t) y(t) + q(t), \quad t \in \bbr, \]
where $p$ and $q$ are non-negative and non-increasing functions. Then, $y$ satisfies 
\[ y(t) \leq y_0 e^{-\int_0^t p(\tau) \di\tau} + e^{-\int_{\frac{t}{2}}^t p(\tau) \di\tau} \int_0^{\frac{t}{2}} q(\tau) \di\tau + q \Big(\frac{t}{2} \Big) \int_{\frac{t}{2}}^t e^{-\int_s^t p(\tau) \di\tau} \di s, \quad t \geq 0.\]
\end{lemma}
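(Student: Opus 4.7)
The plan is to first convert the differential inequality into the standard Grönwall integral form via an integrating factor, and then split the resulting time integral at the midpoint $t/2$ to obtain the two distinct error terms in the claimed bound. The usefulness of this splitting is that on $[0, t/2]$ the exponential weight is small and can absorb the full $L^1$-norm of $q$, while on $[t/2, t]$ the monotonicity of $q$ allows us to pull a pointwise value $q(t/2)$ out of the integral.

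Concretely, I would multiply $y'(t) \leq -p(t) y(t) + q(t)$ by the integrating factor $e^{\int_0^t p(\tau) \di\tau}$, rewrite the left-hand side as $\tfrac{\di}{\di t}\bigl( y(t) e^{\int_0^t p(\tau) \di\tau} \bigr)$, and integrate from $0$ to $t$. This produces the intermediate Grönwall-type inequality
\[
y(t) \leq y_0\, e^{-\int_0^t p(\tau) \di\tau} + \int_0^t q(s)\, e^{-\int_s^t p(\tau) \di\tau} \di s.
\]
The first term on the right-hand side already matches the first term in the conclusion, so it remains to estimate $\int_0^t q(s) e^{-\int_s^t p(\tau) \di\tau} \di s$, which I split as $\int_0^{t/2}+\int_{t/2}^t$.

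For the integral over $[0,t/2]$, I would observe that for $s\le t/2$ we have $[t/2,t]\subseteq[s,t]$, hence by $p\ge 0$,
\[
e^{-\int_s^t p(\tau)\di\tau} \le e^{-\int_{t/2}^t p(\tau)\di\tau}.
\]
Pulling this uniform bound outside and using non-negativity of $q$ yields exactly the second term $e^{-\int_{t/2}^t p(\tau)\di\tau}\int_0^{t/2} q(\tau)\di\tau$. For the integral over $[t/2,t]$, I would use that $q$ is non-increasing, so $q(s)\le q(t/2)$ for $s\in[t/2,t]$; pulling this constant outside the integral produces the third term $q(t/2)\int_{t/2}^t e^{-\int_s^t p(\tau)\di\tau}\di s$.

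I do not expect any genuine obstacle, since the argument is a textbook integrating-factor computation combined with two elementary monotonicity observations. The only point worth flagging is that the monotonicity of $p$ is not actually required for this particular decomposition; only $p\ge 0$ is used, and the assumption that $p$ is non-increasing appears designed to match the application context later in the paper (where both $p$ and $q$ arise from polynomially or exponentially decaying ansätze). Care is needed only to apply the correct monotonicity on the correct subinterval and to keep track of the direction of the inequality after exponentiation.
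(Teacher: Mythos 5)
Your proof is correct and complete. The paper does not give its own argument for this lemma (it simply defers to the cited reference), so there is nothing internal to compare against, but your integrating-factor derivation of the intermediate Gr\"onwall inequality
\[
y(t) \leq y_0 e^{-\int_0^t p(\tau)\,\di\tau} + \int_0^t q(s)\, e^{-\int_s^t p(\tau)\,\di\tau}\, \di s,
\]
followed by the split at $t/2$ with the uniform exponential bound on $[0,t/2]$ (using $p\geq 0$ and $q\geq 0$) and the pointwise bound $q(s)\leq q(t/2)$ on $[t/2,t]$ (using $q$ non-increasing), is exactly the standard route and yields the stated estimate. Your side remark is also accurate: the monotonicity of $p$ is never invoked in this derivation, only its non-negativity; the non-increasing hypothesis on $p$ is apparently carried along because it is what makes the resulting bound useful in the paper's subsequent applications in Sections \ref{sec:4} and \ref{sec:5}, where $p$ and $q$ are explicit decaying functions and one wants the three terms on the right to go to zero.
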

\begin{proof}
For a detailed proof, we refer to \cite{bi-cluster2}.
\end{proof}

\section{Discussion of main results} \label{sec:3}
\setcounter{equation}{0}
In this section, we briefly summarize our main results on the quantitative weak flocking estimate of measure-valued solutions to \eqref{RKCS}. For this, we must have some decay information for the measure-valued solution $\mu_t$ in far-field in phase space. To derive weak flocking estimates,  we consider
\[
\mbox{either}~~{\mathcal M}_e(\mu_0, \alpha)<\infty \quad  \mbox{or} \quad {\mathcal M}_p(\mu_0, D) < \infty.
\]
Note that if for some $\alpha > 0$, 
\[ {\mathcal M}_e(\mu_0, \alpha)  < \infty, \]
then we have
\[ {\mathcal M}_p(\mu_0, D) > 0, \quad \mbox{for all $D \in {\mathbb N}$}. \]
Thus, we first focus on the polynomially decaying distributions.

\subsection{Polynomially decaying distributions}\label{sec:3.1} In this part, we derive the weak flocking estimate for the algebraically decaying distribution class:
\begin{equation} \label{N-2}
\int_{\mathbb{R}^{2d}} \Big (|\bx|^D+ |\bw|^D \Big) \mu_0(\mathrm{\di}\bx, \mathrm{\di}\bw) < \infty.
\end{equation}
Under the setting \eqref{N-2}, we derive the weak flocking estimate in three steps:  \newline   
\begin{itemize}
\item
First step: We identify the time-dependent effective domain $\Omega_{\mbox{eff}}$ so that the total mass outside this effective domain decays to zero asymptotically. 
\vspace{0.1cm}
\item
Second step:  We set the velocity alignment functional ${\mathcal L}$:
\[ {\mathcal L} (t):= \int_{\mathbb{R}^{2d}} |\bw - \bw_c|^2 \mu_t(\di \bx, \di  \bw). \]
Then, we estimate the time-derivate of ${\mathcal L}$ on the effective domain and extract the positive lower bound of the kernel function $\phi$ on it, and use the decay estimate of the mass outside the effective domain to derive a differential inequality, 
\begin{equation} \label{N-3}
	\frac{\di {\mathcal L}}{\di t} \leq  -|{\mathcal O}(1)| (1 + t)^{-(\beta \gamma_p + 3 \delta_p)}  {\mathcal L}  +  {\mathcal O}(1) (1 + t)^{-  \delta_p \left(\frac{(l_1-1)D}{l_1} + 3\right)} +  {\mathcal O}(1)  (1+t)^{-\frac{D}{2} (\gamma_p -1)}.
\end{equation}
Furthermore, we use the generalized Grönwall's type lemma (Lemma \ref{L2.6}) to derive the decay estimate of ${\mathcal L}$.
\vspace{0.2cm}
\item
Third step: We use this time decay of ${\mathcal L}$ to derive the uniform bound estimate for the spatial cohesion.  
\end{itemize}

\vspace{0.5cm}

\noindent Now, we are ready to introduce an effective time-varying domain:
\begin{align}
\begin{aligned} \label{N-4}
& R_x(t) :=  R^0_x(1+ t)^{\gamma_p},  \quad \mbox{for some}~~\gamma_p > 1, \\
& R_{w}(t) :=   R_{w}^0  (1 + t)^{\delta_{p}}, \quad \mbox{for some $\delta_{p} > 0$}, \\
& \Omega_{\mathrm{eff}}(t) := \{ (\bx, \bw) \in \bbr^{2d}:~|\bx| \leq R_x(t) \quad |\bw| \leq R_w(t) \} . 
\end{aligned}
\end{align}
In the sequel, we will use a positive constant $C$ which depends on parameters, and this constant will vary from context to context. In the following lemma, we provide several estimates to be used in the later part of this paper. We set
\begin{equation} \label{N-4-1}
\underline{\phi} (t) := \inf\limits_{\bx(t),\bx_{\star}(t)\in B_{R_x(t)}} \phi(|\bx(t)-\bx_{\star}(t)|). 
\end{equation}
Our first main result can be stated as follows.
\begin{theorem}\label{T3.1}
For $D \in {\mathbb N}$ and $T \in (0, \infty]$, let $\mu \in L^{\infty}([0,T);\mathcal{P}_D(\mathbb{R}^{2d}))$ be a measure-valued solution to \eqref{RKCS} with initial datum $\mu_0$ satisfying
\[  \int_{\mathbb{R}^{2d}}  \mu_0(\di \bx,\di \bw) = 1 \quad \mbox{and} \quad {\mathcal M}_p(\mu_0, D)  < \infty. \]
 Then, the following assertions hold:
\begin{enumerate}
\item
(Velocity alignment):
If nonnegative constants $\beta, \gamma_p, \delta_p, D, l_1$ satisfy
\[ l_1>1,\quad D \geq 2l_1,\quad 0 \leq \beta < 1, \quad \gamma_p > 1, \quad 0 \leq \beta \gamma_p <1, \quad  0 <  \delta_p  < \frac{1-\beta \gamma_p}{3}, \]
then weak mono-cluster flocking \eqref{A-4} emerges algebraically fast:
\[  \int_{\mathbb{R}^{2d}} |\bw - \bw_c|^2 \mu_t(\di \bx, \di  \bw) \lesssim (1 + t)^{-\min \left\{  \delta_p \left(\frac{(l_1-1)D}{l_1} + 3\right),~\frac{D}{2} (\gamma_p -1)  \right\}}, \quad t \gg 1.   \]
\item
(Spatial cohesion):
If $D \in {\mathbb N}$ is sufficiently large such that 
\[ 
 \quad l_1>1, \quad D \geq 2l_1, \quad   \min \left\{  \delta_p \left(\frac{(l_1-1)D}{l_1} + 3\right),~\frac{D}{2} (\gamma_p -1)  \right\} > 2, 
\]
then spatial cohesion emerges asymptotically:
\[
\sup_{0 \leq t < \infty} \int_{\mathbb{R}^{2d}}  \Big |\bx-\frac{\bw_c(0)}{{\tilde F}}t-\bx_c(0) \Big |^2\mu_t(\di \bx,\di \bw) < \infty.
\]
\end{enumerate}
\end{theorem}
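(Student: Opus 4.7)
The plan is to follow the three-step program outlined just above Theorem \ref{T3.1}. First, I would work with the effective domain $\Omega_{\mathrm{eff}}(t)$ from \eqref{N-4}, whose mass complement decays polynomially by Corollary \ref{C2.1}; in particular, the choice $\gamma_p>1$ is exactly what is needed so that $\int_{|\bx|>R_x(t)}\mu_t$ decays despite the $(1+t)^D$ growth in Lemma \ref{L2.4}(ii), and $\delta_p>0$ makes the velocity tail integrable. For the velocity-alignment functional $\mathcal L(t):=\int|\bw-\bw_c|^2\mu_t$, Lemma \ref{L2.2}(iii) provides an exact dissipation identity. The key idea is to split the double integral into $\Omega_{\mathrm{eff}}(t)^2$ and its complement: on $\Omega_{\mathrm{eff}}(t)^2$ the lower bound $\underline\phi(t)\gtrsim(1+t)^{-\beta\gamma_p}$ (from \eqref{A-3-1}, \eqref{N-4-1}) combines with the coercivity $\Lambda(R_w(t))\gtrsim(1+t)^{-3\delta_p}$ of Lemma \ref{L2.3} and Remark \ref{R2.3} to give a dissipative contribution bounded above by $-C(1+t)^{-(\beta\gamma_p+3\delta_p)}\int\!\!\int_{\Omega_{\mathrm{eff}}^2}|\bw-\bw_\star|^2\mu_t\mu_t$.

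Next I would extend the double integral back to $\mathbb R^{4d}$, using the identity $\int\!\!\int|\bw-\bw_\star|^2\mu_t\mu_t=2\mathcal L(t)$; the resulting tail errors are controlled by H\"older's inequality applied to $|\bw|^2\chi_{\{|\bw|\geq R_w\}}$ with conjugate exponents $l_1$ and $l_1/(l_1-1)$, using Lemma \ref{L2.4}(i) to bound the $2l_1$-moment (this is where $D\geq 2l_1$ enters) and Corollary \ref{C2.1} to estimate $\mu_t(|\bw|\geq R_w(t))\lesssim (1+t)^{-\delta_p D}$ and $\mu_t(|\bx|\geq R_x(t))\lesssim (1+t)^{-D(\gamma_p-1)}$; Cauchy--Schwarz in the spatial tail contributes the half-power $D(\gamma_p-1)/2$. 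This produces precisely the differential inequality \eqref{N-3}. Applying Lemma \ref{L2.6} with $p(t):=C(1+t)^{-(\beta\gamma_p+3\delta_p)}$ and the corresponding forcing $q(t)$, the constraint $\beta\gamma_p+3\delta_p<1$ ensures $\int_0^\infty p(\tau)\di\tau=\infty$ so that both the $y_0 e^{-\int_0^t p}$ and the $e^{-\int_{t/2}^t p}\int_0^{t/2}q$ terms are super-polynomially small, while the last term $q(t/2)\int_{t/2}^t e^{-\int_s^t p}\di s$ inherits the polynomial rate of $q$, yielding the stated decay for $\mathcal L(t)$.

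For Part (2), I would set $\mathcal G(t):=\bigl(\int|\bx-\tfrac{\bw_c(0)}{\tilde F}t-\bx_c(0)|^2\mu_t\bigr)^{1/2}$ and differentiate along the characteristics \eqref{RKCS-3}--\eqref{RKCS-4}. Using $\dot\bx=\bw/\tilde F$ the time derivative of the integrand reduces to a cross-term of the form $(\bx-\tfrac{\bw_c(0)}{\tilde F}t-\bx_c(0))\cdot\bigl(\tfrac{\bw-\bw_c(0)}{\tilde F}\bigr)$ up to controllable corrections from $\partial_t(1/\tilde F(\bw(t)))$, which can be absorbed via the a priori bound $|\bw/\tilde F|\leq c$ and the Lipschitz-type estimate in Lemma \ref{L2.3}. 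Cauchy--Schwarz then gives $\mathcal G'(t)\leq C\sqrt{\mathcal L(t)}$, and the hypothesis $\min\{\delta_p((l_1-1)D/l_1+3),D(\gamma_p-1)/2\}>2$ makes $\sqrt{\mathcal L(t)}\lesssim(1+t)^{-\lambda/2}$ with $\lambda/2>1$, hence integrable in time; integrating produces the uniform spatial cohesion bound. I expect the main obstacle to be the multi-parameter optimisation: the coercivity exponent forces $\beta\gamma_p+3\delta_p<1$, while the spatial-cohesion requirement forces the tail exponent to exceed $2$, which drives $D$ large and then constrains $l_1,\gamma_p,\delta_p$ through the nonlinear expression $\min\{\delta_p((l_1-1)D/l_1+3),D(\gamma_p-1)/2\}$; tracking these dependencies consistently through the H\"older splitting and the generalised Gr\"onwall argument is the delicate bookkeeping that yields the precise form of the theorem.
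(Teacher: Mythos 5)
Your proposal is correct and follows essentially the same route as the paper's own proof: the same effective domain $\Omega_{\mathrm{eff}}(t)$ with $R_x\sim(1+t)^{\gamma_p}$, $R_w\sim(1+t)^{\delta_p}$; the same decomposition of the dissipation identity from Lemma \ref{L2.2}(iii) into an ``interior'' term (where $\underline\phi(t)\gtrsim(1+t)^{-\beta\gamma_p}$ and $\Lambda(R_w(t))\gtrsim(1+t)^{-3\delta_p}$ combine) plus spatial and velocity tail corrections controlled by Cauchy--Schwarz, H\"older with exponents $l_1$ and $l_1/(l_1-1)$, and Lemma \ref{L2.4}/Corollary \ref{C2.1}; the same invocation of the generalized Gr\"onwall Lemma \ref{L2.6}; and the same $\mathcal G'(t)\le\sqrt{\mathcal L(t)}$ argument plus integrability of $\sqrt{\mathcal L}$ for spatial cohesion. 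The only deviation is your extra caution about a correction from $\partial_t(1/\tilde F)$ in Part (2); in the paper's formulation the quantity $\tilde F$ appearing in the shifted position is the fixed value $\tilde F(\bw_c(0))$, so this term does not arise and Lemma \ref{L2.3}(ii) applies directly.
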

\begin{proof}
Since the proofs are very lengthy, we leave them in Section \ref{sec:4}.
\end{proof}
%
\subsection{Exponentially decaying distributions}\label{sec:3.2} 
In this part, we derive the weak flocking estimate for the exponentially decaying distribution class: 
\begin{equation} \label{N-5}
{\mathcal M}_e(\mu_0, \alpha) =  \int_{\mathbb{R}^{2d}} e^{\alpha\left(|\bx|+|\bw|\right)} \mu_0(\mathrm{\di}\bx, \mathrm{\di }\bw)  < \infty,
\end{equation}
for $\alpha > 0$. Since 
\[ 
{\mathcal M}_e(\mu_0, \alpha) < \infty \quad \mbox{for some $\alpha > 0$} \quad \Longrightarrow \quad  {\mathcal M}_p(\mu_0, D) < \infty \quad \mbox{for all $D \geq 0$},
\]
the weak mono-cluster flocking in Theorem \ref{T3.1} can be applied for exponentially decaying distributions as well. Therefore, the analysis is almost the same as the polynomially decaying distributions except for variations of constants, although some conditions on parameters can be relaxed. 
\begin{align}
\begin{aligned} \label{N-6}
& R_x(t) :=  R^0_x +  \gamma_e t \quad \mbox{for some}~~\gamma_e > c, \\
& R_{w}(t) :=   R_{w}^0(1 + t)^{\delta_e}, \quad \mbox{for some $\delta_e > 0$}, \\
&  \Omega_{\mathrm{eff}}(t) := \{ (\bx, \bw) \in \bbr^{2d}:~|\bx| \leq R_x(t) \quad |\bw| \leq R_w(t) \}.
\end{aligned}
\end{align}
Note that compared to \eqref{N-4} in which $R_x(t)$ has a super-linear growth, $R_x(t)$ defined in \eqref{N-6} has a linear growth. It follows from Corollary \ref{C2.2} that 
\begin{align}
\begin{aligned} \label{N-7}
& \int_{|x_{\star}|\ge R_x(t)} \mu_t(\di \bx_{\star},\di \bw_{\star}) \\
& \hspace{1cm} \le \mathcal{M}_{e}(\mu_0, \alpha) e^{\alpha (ct-R_x(t))} = \mathcal{M}_{e}(\mu_0, \alpha) e^{-\alpha R_x^0} e^{-\alpha(\gamma_e -c)t} \to 0, \quad \mbox{as $t \to \infty$}. 
\end{aligned}
\end{align}
\begin{theorem}\label{T3.2}
Suppose that $\gamma_e, \delta_e, \beta$ and $\alpha$ satisfy
\[
\alpha > 0, \quad 0 \leq \beta < 1, \quad \gamma_e > c, \quad 0 <  \delta_e < \frac{1-\beta}{3},
\]
and let $\mu \in L^{\infty}([0,T);\mathcal{P}(\mathbb{R}^{2d}))$ be a measure-valued solution to \eqref{RKCS} with the initial datum $\mu_0$ satisfying
\[  \int_{\mathbb{R}^{2d}}  \mu_0(\di \bx,\di \bw) = 1 \quad \mbox{and} \quad  {\mathcal M}_e(\mu_0, \alpha) < \infty. \]
 Then, the following assertions hold. 
 \begin{enumerate}
 \item
 Velocity alignment occurs exponentially fast:
 \[  \int_{\mathbb{R}^{2d}} |\bw - \bw_c|^2 \mu_t(\di \bx, \di  \bw) \lesssim{\tilde C}_{10} \left(e^{-{\tilde C}_9 t^{1-(3\delta_e + \beta)}}+e^{-\frac{\alpha}{4}(1+t)^{\delta_e}} \right). \]
\item Spatial cohesion occurs uniformly in time:
\[
\sup_{0 \leq t < \infty} \int_{\mathbb{R}^{2d}}  \Big |\bx-\frac{\bw_c(0)}{{\tilde F}}t-\bx_c(0) \Big |^2\mu_t(\di \bx,\di \bw) < \infty.
\]
\end{enumerate}
\end{theorem}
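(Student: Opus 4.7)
The plan is to follow the three-step template laid out in Section~\ref{sec:3.1}, but to replace the polynomial moment machinery by the exponential one supplied by Lemma~\ref{L2.5} and Corollary~\ref{C2.2}; this upgrade of the far-field mass decay is precisely what converts the algebraic rate of Theorem~\ref{T3.1} into the stretched-exponential rate stated above. For the \emph{effective domain} \eqref{N-6}, Corollary~\ref{C2.2} delivers at once
\begin{equation*}
\int_{|\bx|\geq R_x(t)}\mu_t(\di\bx,\di\bw)\lesssim e^{-\alpha(\gamma_e-c)t},\qquad \int_{|\bw|\geq R_w(t)}\mu_t(\di\bx,\di\bw)\lesssim e^{-\alpha R_w^0(1+t)^{\delta_e}},
\end{equation*}
proper-exponentially in $t$ (thanks to $\gamma_e>c$) and stretched-exponentially, respectively.

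For the velocity alignment, set $\mathcal{L}(t):=\int|\bw-\bw_c|^2\mu_t$ and apply Lemma~\ref{L2.2}(iii) to obtain the non-positive symmetric double-integral expression for $\mathcal{L}'(t)$. Restricting the $(\bx,\bw,\bx_\star,\bw_\star)$ integration to $\Omega_{\mathrm{eff}}(t)^2$ and combining (i) the linear growth of $R_x(t)$, which secures $\underline{\phi}(t)\gtrsim (1+t)^{-\beta}$, (ii) Lemma~\ref{L2.3} with $R=R_w(t)$, which secures $\Lambda(R_w(t))\gtrsim(1+t)^{-3\delta_e}$, and (iii) the variance identity $\iint|\bw-\bw_\star|^2\mu_t\otimes\mu_t=2\mathcal{L}(t)$, one arrives at
\begin{equation*}
\mathcal{L}'(t)\leq -C_1(1+t)^{-(\beta+3\delta_e)}\mathcal{L}(t)+\mathcal{E}(t),\qquad \mathcal{E}(t)\lesssim e^{-c_1 t}+e^{-c_2(1+t)^{\delta_e}},
\end{equation*}
where $\mathcal{E}(t)$ collects the contribution of the complement $(\Omega_{\mathrm{eff}}(t)^2)^c$, bounded via Cauchy-Schwarz using the exponential $|\bw|$-moments from Lemma~\ref{L2.5} together with the tail mass estimates above. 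Since $\delta_e<(1-\beta)/3$, the exponent $(1+t)^{-(\beta+3\delta_e)}$ has divergent integral on $[0,\infty)$, and Lemma~\ref{L2.6} (the generalized Gr\"onwall lemma) yields exactly the advertised bound $\mathcal{L}(t)\lesssim e^{-\tilde C_9 t^{1-(3\delta_e+\beta)}}+e^{-\frac{\alpha}{4}(1+t)^{\delta_e}}$.

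For the spatial cohesion, write $\bv_c:=\bw_c(0)/\tilde F(|\bw_c(0)|)$ and $Y(t):=\int|\bx-\bv_c t-\bx_c(0)|^2\mu_t$; the weak formulation \eqref{B-0-0} together with Cauchy-Schwarz and the 1-Lipschitz bound of Lemma~\ref{L2.3} for the relativistic velocity map give $\frac{d}{dt}\sqrt{Y(t)}\leq C\sqrt{\mathcal{L}(t)}$, so that the stretched-exponential integrability of $\sqrt{\mathcal{L}}$ from the previous step delivers $\sup_{t\geq 0}\sqrt{Y(t)}<\infty$. The main obstacle, as in the polynomial case, is the tail bookkeeping in $\mathcal{E}(t)$: on $(\Omega_{\mathrm{eff}}(t)^2)^c$ the integrand carries $|\bw|^2$-type growth that is not controlled by the bound $|\bw/\tilde F|\leq c$, so one must pair the exponentially small far-field mass from Step~1 with a Cauchy-Schwarz split that consumes $|\bw|^2$ against a high-order exponential moment of Lemma~\ref{L2.5}, to ensure $\mathcal{E}(t)$ is dominated by the coercive decay and does not destabilize the application of Lemma~\ref{L2.6}.
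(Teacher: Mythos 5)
Your proposal is correct and follows essentially the same route as the paper's Section~\ref{sec:5}: the same effective domain \eqref{N-6}, the same staged decomposition of $\mathcal{L}'(t)$ into a coercive inner part plus spatial and velocity tails controlled via Corollary~\ref{C2.2} and Lemma~\ref{L2.5}, the same coercivity input from Lemma~\ref{L2.3} with the rate $(1+t)^{-3\delta_e}$, the same application of the generalized Gr\"onwall Lemma~\ref{L2.6}, and the same $\frac{d}{dt}\sqrt{Y}\le\sqrt{\mathcal{L}}$ argument for spatial cohesion. Your closing caveat about the $|\bw|^2$-growth in the tail terms is indeed the point the paper handles in Lemma~\ref{L5.4}(ii)--(iii) via Cauchy--Schwarz against $\mathcal{M}_p(\mu_0,2)$ and $\mathcal{M}_p(\mu_0,4)$, both finite under $\mathcal{M}_e(\mu_0,\alpha)<\infty$, so there is no gap.
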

\begin{proof}
Since the proof is very lengthy, we leave its proof in Section \ref{sec:5}.
\end{proof}
\begin{remark}
	We provide some comments on Theorem \ref{T3.1}, Theorem \ref{T3.2} and results in the previous papers.
	\begin{enumerate}
		
		\item 	First, compared to the previous series of compact support work \cite{Ha2020,Ha2021}, we extend their compact setting to a fully non-compact setting. If the solution is compactly supported, our results can be seen as flocking dynamics also. \vspace{0.1cm}
		
		\item 	Second, compared to the classical KCS model \cite{kinetic4,w2,w6}, the velocity coupling term of the RKCS model is nonlinear, which makes weak flocking analysis difficult.\vspace{0.1cm}
		
		\item 	Third, compared to the classical KCS model with a fully non-compact setting \cite{w8}, we can get the exponential convergence of weak flocking behavior, even when the velocity is not compactly supported. In contrast, exponential convergence of the classical KCS model can be established only when the velocity variable is compactly supported.\vspace{0.1cm}
		
		\item Finally, classical limits have been studied in previous papers under compact support\cite{rcs1,rcs3,HLR}, but whether we can establish the weak flocking behavior through classical limit with non-compact support is not known yet.
	\end{enumerate}
\end{remark}
\section{Emergence of weak flocking I}\label{sec:4}
\setcounter{equation}{0}
In this section, we study the quantitative weak flocking dynamics of measure-valued solutions to \eqref{RKCS} in the class of polynomially decaying distribution class:
\[  \int_{\mathbb{R}^{2d}} \Big (|\bx|^D+ |\bw|^D \Big) \mu_t(\mathrm{\di}\bx, \mathrm{\di}\bw) < \infty. \]
\subsection{Velocity alignment}\label{sec:4.1} In this subsection, we derive quantitative velocity alignment for the algebraically decaying distribution class. Suppose that the initial measure $\mu_0$ satisfies 
\begin{equation} \label{C-0-1}
{\mathcal M}_p(\mu_0, D) = \int_{\mathbb{R}^{2d}} \Big (|\bx|^D+ |\bw|^D \Big) \mu_0(\mathrm{\di}\bx, \mathrm{\di}\bw) < \infty. 
\end{equation}
Let $(\bx, \bw), (\bx_{\star}, \bw_{\star})$ be two points in phase space $\bbr^{2d}$ and we consider the particle trajectories  issued from $(\bx, \bw), (\bx_{\star}, \bw_{\star})$, respectively:
\begin{align*}
\begin{aligned}
& (\bx(t), \bw(t)) := (\bx(t,0,\bx, \bw), \bw(t,0,\bx, \bw)), \\
& (\bx_{\star}(t), \bw_{\star}(t)) := (\bx(t,0,\bx_{\star}, \bw_{\star}), \bw(t,0,\bx_{\star}, \bw_{\star})),
\end{aligned}
\end{align*}
i.e., they are solutions to the following Cauchy problems for ODEs:
\[
	\begin{cases}
	\displaystyle \dot \bx(t)=\frac{\bw(t)}{{\tilde F}(t)}, \quad t > 0,\vspace{0.2cm} \\
	\displaystyle  \dot \bw(t)={\bf F}_a[\mu_t](t,\bx(t), \bw(t)), \vspace{0.2cm}\\
	\displaystyle (\bx(0), \bw(0)) = (\bx, \bw),
	\end{cases}
\quad \mbox{and} \qquad 
\begin{cases}
	\displaystyle \dot \bx_{\star}(t)=\frac{\bw_{\star}(t)}{{\tilde F}_{\star}(t)},  \quad t > 0,\vspace{0.2cm}\\
	\displaystyle  \dot \bw_{\star}(t)={\bf F}_a[\mu_t](t,\bx_{\star}(t), \bw_{\star}(t)), \vspace{0.2cm}\\
	\displaystyle (\bx_\star(0), \bw_\star(0)) = (\bx_\star, \bw_\star).
	\end{cases}
\]
In the following lemma, we provide several estimates to be used in the later part of this paper. 
\begin{lemma} \label{L4.1}
For $D \in {\mathbb N}$ and $T \in (0, \infty]$, let $\mu \in L^{\infty}([0,T);\mathcal{P}(\mathbb{R}^{2d}))$ be a measure-valued solution to \eqref{RKCS} with initial datum $\mu_0$ satisfying
\[  \int_{\mathbb{R}^{2d}}  \mu_0(\di \bx,\di \bw) = 1, \quad {\mathcal M}_p(\mu_0, D)  < \infty. \]
Then, there exist positive constants $C_1 = C_1( D, {\mathcal M}_p(\mu_0, D), R_x^0, c^D)$ and $C_2 = C_2(\beta, R_x^0)$  such that 
\[
\int_{|\bx| \geq R_x(t)} \mu_t(\di \bx, \di \bw) \leq C_1(1 + t)^{-D(\gamma_p - 1)}, \quad C_2 (1+t)^{-\beta \gamma_p} \leq  \underline{\phi} (t) \leq 1 \quad \mbox{for}~ t \gg 1,
\]
where $\gamma_p$ and $\underline{\phi}$ are defined in \eqref{N-4} ad \eqref{N-4-1}, respectively. 
\end{lemma}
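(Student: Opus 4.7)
The plan is to derive both estimates directly, since each is a short consequence of material already established in Section~\ref{sec:2} together with the explicit form of $R_x(t)$ in \eqref{N-4} and the specific kernel $\phi$ in \eqref{A-3-1}.

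For the mass-decay estimate, I would invoke Corollary~\ref{C2.1}(ii), which already provides
\[
\int_{|\bx|\ge R_x(t)}\mu_t(\di\bx,\di\bw)\le \frac{2^{D-1}\max\{{\mathcal M}_p(\mu_0,D),c^D\}(1+t)^D}{R_x(t)^D}.
\]
Plugging in $R_x(t)=R_x^0(1+t)^{\gamma_p}$ directly cancels one factor of $(1+t)^D$ against the denominator, leaving a residual factor $(1+t)^{D-D\gamma_p}=(1+t)^{-D(\gamma_p-1)}$. The constant $C_1$ is then read off as $C_1=2^{D-1}\max\{{\mathcal M}_p(\mu_0,D),c^D\}/(R_x^0)^D$, depending only on the advertised parameters.

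For the kernel bound, the lower bound $\underline\phi(t)\le 1$ is trivial since $\phi(0)=1$ and $\phi\le 1$ pointwise. The nontrivial direction rests on the observation that the diameter of $B_{R_x(t)}$ is $2R_x(t)$ and that $\phi$ is decreasing in $|\bx|$, so
\[
\underline\phi(t)=\phi\bigl(2R_x(t)\bigr)=\bigl(1+4R_x(t)^2\bigr)^{-\beta/2}=\bigl(1+4(R_x^0)^2(1+t)^{2\gamma_p}\bigr)^{-\beta/2}.
\]
For $t$ sufficiently large (concretely, once $4(R_x^0)^2(1+t)^{2\gamma_p}\ge 1$) the inequality $1+a\le 2a$ applies, giving
\[
\underline\phi(t)\ge \bigl(8(R_x^0)^2\bigr)^{-\beta/2}(1+t)^{-\beta\gamma_p},
\]
so we may take $C_2=(8(R_x^0)^2)^{-\beta/2}$, depending only on $\beta$ and $R_x^0$.

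I do not expect a genuine obstacle here: the lemma is essentially a bookkeeping step that packages the global moment propagation of Lemma~\ref{L2.4}/Corollary~\ref{C2.1} and the monotonicity of $\phi$ into the time-dependent form needed for later differential-inequality arguments. The only small care point is making the threshold $t\gg 1$ explicit (i.e., choosing it so the $1+a\le 2a$ reduction is valid), but this is harmless since the subsequent weak-flocking arguments operate in the large-$t$ regime anyway.
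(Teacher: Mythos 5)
Your proposal is correct and follows essentially the same route as the paper: part (i) is exactly the paper's application of Corollary~\ref{C2.1}(ii) with the explicit $R_x(t)=R_x^0(1+t)^{\gamma_p}$, and part (ii) uses the same monotonicity of $\phi$ plus the same $1+a\le 2a$ reduction, yielding the identical constant $C_2=(8(R_x^0)^2)^{-\beta/2}=2^{-3\beta/2}(R_x^0)^{-\beta}$. (Minor slip: you wrote ``lower bound $\underline\phi(t)\le 1$'' where you meant ``upper bound,'' but the argument itself is fine.)
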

\begin{proof}
\noindent (i)~ It follows from Corollary \ref{C2.1} and Lemma \ref{L2.4} that 
\begin{align}
\begin{aligned} \label{C-1}
 &\int_{|\bx| \geq R_x(t)} \mu_t(\di \bx, \di \bw)  \le \frac{   2^{D-1} \max \Big \{  {\mathcal M}_p(\mu_0, D), c^D  \Big\} (1 + t)^D }{R_x(t)^D} \\
 & \hspace{1cm} \leq \frac{   2^{D-1} \max \Big \{  {\mathcal M}_p(\mu_0, D), c^D \Big\} }{(R^0_x)^D} (1 + t)^{-D(\gamma_p - 1)}.
 \end{aligned}
 \end{align}
 \vspace{0.2cm}
 
 \noindent (ii)~For a given $t > 0$, we consider two particle trajectories inside the effective domain:
\[ |\bx(t)|\le R_x(t), \quad |\bx_{\star}(t)| \leq R_x(t). \]
Then, we have
\begin{align*}
\begin{aligned}
& | \bx(t)-\bx_{\star}(t)|\le 2 R_x(t) \quad \mbox{and} \\
&  \underline{\phi} (t) :=  \inf\limits_{\bx(t),\bx_{\star}(t) \in B_{R_x(t)}} \phi(|\bx(t)-\bx_{\star}(t)|) \\
& \hspace{1cm} \ge \phi(2R_x(t))  = \frac{1}{\left(1+ 4 |R^0_x|^2 (1 + t)^{2\gamma_p}\right)^{\frac{\beta}{2}}}  \gtrsim \frac{1}{2^{\frac{3\beta}{2}} |R_x^0|^{\beta}} (1+t)^{-\beta \gamma_p},
\end{aligned}
\end{align*}
where we use the relation:
\[ \left(1+ 4 |R^0_x|^2 (1 + t)^{2\gamma_p}\right)^{\frac{\beta}{2}} \lesssim 2^{\frac{3\beta}{2}} |R^0_x|^{\beta} (1 + t)^{ \beta \gamma_p}, \quad t \gg 1. \]
\end{proof}

\begin{lemma} \label{L4.2}
For $D \in {\mathbb N}$, let $\mu \in L^{\infty}([0,T);\mathcal{P}(\mathbb{R}^{2d}))$ be a measure-valued solution to \eqref{RKCS} with initial datum $\mu_0$ satisfying
\[  \int_{\mathbb{R}^{2d}}  \mu_0(\di \bx,\di \bw) = 1, \quad {\mathcal M}_p(\mu_0, D)< \infty,  \]
and let $\bw(t)$ and $\bw_{\star}(t)$ be particle trajectories such that 
\[  |\bw(t) |\le R_{w}(t), \quad  |\bw_{\star}(t)| \le R_{w}(t). \]
Then, there exists a positive constant $C_3 =  C_3 (R_w^0)$ such that 
\[ \left\langle \bw_{\star}(t)- \bw(t),\frac{\bw_{\star}(t)}{{\tilde F}_{\star}} - \frac{\bw(t)}{{\tilde F}} \right\rangle  \geq C_3 (1 + t)^{-3\delta_p} |\bw_{\star}(t)- \bw(t)|^2. \]
\end{lemma}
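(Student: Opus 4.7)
The plan is to appeal directly to Lemma \ref{L2.3}, instantiated at the radius $R=R_w(t)$, and then control the explicit dependence of $\Lambda(R)$ on $R$ using the form recorded in Remark \ref{R2.3}. Since both trajectories lie inside the $\bw$-ball of radius $R_w(t)=R_w^0(1+t)^{\delta_p}$ by hypothesis, Lemma \ref{L2.3} gives
\[
\left\langle \bw_{\star}(t)-\bw(t),\,\frac{\bw_{\star}(t)}{\tilde F_{\star}}-\frac{\bw(t)}{\tilde F}\right\rangle \;\geq\; \Lambda\bigl(R_w(t)\bigr)\,|\bw_{\star}(t)-\bw(t)|^2.
\]

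Next I would expand the quantitative form of $\Lambda$ from Remark \ref{R2.3}, namely $\Lambda(R)\asymp \frac{1}{2}(1+R^2/c^2)^{-3/2}$. Plugging in $R=R_w(t)$ yields
\[
\Lambda\bigl(R_w(t)\bigr) \;\asymp\; \frac{1}{2}\Bigl(1+\tfrac{|R_w^0|^2}{c^2}(1+t)^{2\delta_p}\Bigr)^{-3/2}.
\]
For the lower bound I would simply crude-dominate the bracket from above by a constant multiple of $(1+t)^{2\delta_p}$ valid for all $t\geq 0$: using $1\leq (1+t)^{2\delta_p}$ one obtains $1+\frac{|R_w^0|^2}{c^2}(1+t)^{2\delta_p}\leq \bigl(1+\frac{|R_w^0|^2}{c^2}\bigr)(1+t)^{2\delta_p}$, hence
\[
\Lambda\bigl(R_w(t)\bigr) \;\geq\; \frac{1}{2}\Bigl(1+\tfrac{|R_w^0|^2}{c^2}\Bigr)^{-3/2}(1+t)^{-3\delta_p}.
\]
Setting $C_3$ equal to this prefactor (a constant depending only on $R_w^0$ and the fixed $c$, as advertised) finishes the estimate.

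There is really no obstacle here: the lemma is essentially a bookkeeping consequence of the relativistic monotonicity inequality in Lemma \ref{L2.3} combined with the explicit power-law ansatz \eqref{N-4} on $R_w(t)$. The only point requiring a word of care is ensuring that the $(1+t)^{-3\delta_p}$ rate, and not something worse, falls out of the $-3/2$ power in Remark \ref{R2.3}; this is exactly why the factor $3$ appears in the exponent of the conclusion, and it is secured by the crude majorization above. No smallness on $\delta_p$ or $t$ is needed for this step, so the constant $C_3$ is genuinely uniform in $t\geq 0$.
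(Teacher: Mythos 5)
Your proposal is correct and follows essentially the same route as the paper: invoke the relativistic monotonicity inequality of Lemma \ref{L2.3} at radius $R=R_w(t)$, then lower-bound $\Lambda(R_w(t))$ via Remark \ref{R2.3} and the power-law ansatz \eqref{N-4} to extract the $(1+t)^{-3\delta_p}$ rate. Your crude majorization $1+\tfrac{|R_w^0|^2}{c^2}(1+t)^{2\delta_p}\leq\bigl(1+\tfrac{|R_w^0|^2}{c^2}\bigr)(1+t)^{2\delta_p}$ is a slightly cleaner way of absorbing the additive $1$ than the paper's two-step estimate, but it produces the same rate and an equivalent constant.
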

\begin{proof}
Consider the particle trajectories $\bw(t)$ and $\bw_{\star}(t)$ confined in $R_w$-ball:
\[ |\bw(t) |\le R_{w}(t) \quad \mbox{and} \quad   |\bw_{\star}(t)| \le R_{w}(t).\]
Then, it follows from Lemma \ref{L2.5} that there exists a time-dependent function $\Lambda(t)$ such that
\begin{equation} \label{C-4}
 \left\langle \bw_{\star}(t)- \bw(t),\frac{\bw_{\star}(t)}{{\tilde F}_{\star}} - \frac{\bw(t)}{{\tilde F}} \right\rangle  \geq \Lambda(t) |\bw_{\star}(t)- \bw(t)|^2,
\end{equation}
where we used the hand notation ${\tilde F}_{\star} \equiv {\tilde F}(\bw_{\star})$.  In fact, $\Lambda$ satisfies the following decay estimate: 
\begin{equation} \label{C-5}
\Lambda(t) := \mathcal{O}\left(\frac{1}{2\left(1+\frac{|R_{w}(t)|^2}{c^2}\right)^{\frac{3}{2}}} \right)\gtrsim \frac{1}{2\left(1+ |R^0_{w}|^2 (1 + t)^{2\delta_p}  \right)^{\frac{3}{2}}} \gtrsim \frac{1}{4 \sqrt{2} \max\{1,  |R^0_{w}|^3 \} }  (1 + t)^{-3\delta_p},
\end{equation}
where we used the following relation:
\[ 2\left(1+ |R^0_{w}|^2 (1 + t)^{2\delta_p}  \right)^{\frac{3}{2}} \lesssim  2 \sqrt{2} (1 + |R^0_{w}|^3 (1 + t)^{3\delta_p}) \lesssim 4 \sqrt{2} \max\{1,  |R^0_{w}|^3 \}  (1 + t)^{3\delta_p}. \]
\end{proof}
\subsubsection{Derivation of differential inequality for ${\mathcal L}$} \label{sec:4.1.1}
Note that  Lemma \ref{L2.2}  and the conservation law $\bw_c(t) = \bw_c(0)$ imply
\begin{equation} \label{C-6}
 \dfrac{\di}{\di t} {\mathcal L}(t) =-\int_{\bbr^{4d}} \left\langle \bw_{\star}-\bw,\frac{\bw_{\star}}{{\tilde F}_{\star}} - \frac{\bw}{{\tilde F}} \right\rangle   \phi(|\bx-\bx_\star|) \mu_t(\di \bx,\di \bw) \mu_t(\di \bx_{\star},\di \bw_{\star})\le 0.
\end{equation}
In what follows, we estimate the right-hand side of \eqref{C-6} as
\[ - |{\mathcal O}(1)| (1 + t)^{-(3\delta_p + \beta \gamma_p)} {\mathcal L} + \mbox{time-decaying terms}. \]
Then, it follows from \eqref{RKCS-4} that the relation \eqref{C-1} can be rewritten and estimated using particle trajectories:
\begin{align}
\begin{aligned} \label{C-7}
&\frac{\di}{\di t} {\mathcal L}(t) = \dfrac{\di}{\di t}\int_{\mathbb{R}^{2d}}|\bw(t)-\bw_c|^2\mu_0(\di \bx,\di \bw)\\ 
&=-\int_{\mathbb{R}^{4d}} \phi(|\bx(t)-\bx_{\star}(t)|) \left\langle \bw_{\star}(t)-\bw(t),\frac{\bw_{\star}(t)}{{\tilde F}_{\star}} - \frac{\bw(t)}{{\tilde F}} \right\rangle \mu_0(\di \bx,\di \bw) \mu_0(\di \bx_{\star},\di\bw_{\star})\\
& \leq -\underline{\phi}(t) \int_{|\bx(t)|\le R_x(t)}\int_{|\bx_{\star}(t)|\le R_x(t)} \left\langle \bw_{\star}(t)-\bw(t),\frac{\bw_{\star}(t)}{{\tilde F}_{\star}} - \frac{\bw(t)}{{\tilde F}} \right\rangle \mu_0(\di \bx,\di \bw) \mu_0(\di \bx_{\star},\di\bw_{\star})\\
& = -\underline{\phi}(t) \int_{\mathbb{R}^{4d}}\left\langle \bw_{\star}(t)- \bw(t),\frac{\bw_{\star}(t)}{{\tilde F}_{\star}} - \frac{\bw(t)}{{\tilde F}} \right\rangle \mu_0(\di \bx,\di \bw) \mu_0(\di \bx_{\star},\di\bw_{\star})\\
&+ \underline{\phi}(t)  \int_{\mathbb{R}^{2d}} \int_{|\bx_{\star}(t)|> R_x(t)}\left\langle \bw_{\star}(t)-\bw(t),\frac{\bw_{\star}(t)}{{\tilde F}_{\star}} - \frac{\bw(t)}{{\tilde F}} \right\rangle \mu_0(\di \bx,\di \bw) \mu_0(\di \bx_{\star},\di\bw_{\star})\\
&+\underline{\phi}(t) \int_{|\bx(t)|> R_x(t)}\int_{\mathbb{R}^{2d}}\left\langle \bw_{\star}(t)-\bw(t),\frac{\bw_{\star}(t)}{{\tilde F}_{\star}} - \frac{\bw(t)}{{\tilde F}} \right\rangle \mu_0(\di \bx,\di \bw) \mu_0(\di \bx_{\star},\di\bw_{\star})\\
&=:{\mathcal I}_{11} + {\mathcal I}_{12} + {\mathcal I}_{13},
\end{aligned}
\end{align}
where we use point-wise non-negativity:
\[
\left\langle \bw_{\star}-\bw,\frac{\bw_{\star}}{{\tilde F}_{\star}} - \frac{\bw}{{\tilde F}} \right\rangle \geq 0.
\]
In the following two lemmas, we estimate the term ${\mathcal I}_{1i},~i=1,2,3$ one by one. 
\begin{lemma}  \label{L4.3} 
There exists a positive constant $C_4 = C_4 (c, \mathcal{M}_p(\mu_0,2), C_1)$ such that 
\[ |{\mathcal I}_{12}(t)| \leq \frac{C_4}{2}  (1+t)^{-\frac{D}{2} (\gamma_p -1)}, \quad  |{\mathcal I}_{13}(t)| \leq  \frac{C_4}{2}  (1+t)^{-\frac{D}{2} (\gamma_p -1)}. \]
\end{lemma}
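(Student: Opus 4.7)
\medskip

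\noindent\textbf{Proof proposal for Lemma \ref{L4.3}.}
The plan is to reduce both $\mathcal I_{12}$ and $\mathcal I_{13}$ to a simple tail integral against $\mu_t$, and then combine the uniform bound $\underline{\phi}(t)\le 1$ of Lemma \ref{L4.1} with the relativistic speed bound \eqref{A-3-2}, the propagated second moment from Lemma \ref{L2.4}(i), and the spatial tail estimate from Lemma \ref{L4.1}. The crucial observation is that because of $\bigl|\tfrac{\bw}{\tilde F}\bigr|\le c$ we have, for arbitrary $\bw,\bw_\star$,
\[
\Big|\tfrac{\bw_\star}{\tilde F_\star}-\tfrac{\bw}{\tilde F}\Big|\le 2c,\qquad
\Big\langle \bw_\star-\bw,\tfrac{\bw_\star}{\tilde F_\star}-\tfrac{\bw}{\tilde F}\Big\rangle \le 2c\,|\bw_\star-\bw|,
\]
so the full nonlinear structure of the inner product is irrelevant here; only its size matters.

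First I would treat $\mathcal I_{12}$. Using the above inequality and $\underline{\phi}(t)\le 1$,
\[
|\mathcal I_{12}(t)|\le 2c\int_{\mathbb R^{2d}}\int_{|\bx_\star(t)|>R_x(t)} |\bw_\star(t)-\bw(t)|\,\mu_0(\di\bx,\di\bw)\mu_0(\di\bx_\star,\di\bw_\star).
\]
Then I would invoke the push-forward identity \eqref{RKCS-4} (applied to the characteristic flow, which is a bijection, so the restriction $|\bx_\star(t)|>R_x(t)$ transfers to $|\bx_\star|>R_x(t)$ under the integral against $\mu_t$) to rewrite this as
\[
|\mathcal I_{12}(t)|\le 2c\int_{\mathbb R^{2d}}\int_{|\bx_\star|>R_x(t)} (|\bw_\star|+|\bw|)\,\mu_t(\di\bx,\di\bw)\mu_t(\di\bx_\star,\di\bw_\star).
\]
Splitting the integrand and using $\|\mu_t\|=1$, this is bounded by
\[
2c\int_{|\bx_\star|>R_x(t)}|\bw_\star|\,\mu_t(\di\bx_\star,\di\bw_\star) \;+\; 2c\Big(\int|\bw|\,\mu_t\Big)\Big(\int_{|\bx_\star|>R_x(t)}\mu_t\Big).
\]

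Next I would apply Cauchy--Schwarz to the first summand and Jensen plus Lemma \ref{L2.4}(i) (with $D=2$, guaranteed by $D\ge 2l_1\ge 2$) to control $\int|\bw|\,\mu_t\le\mathcal M_p(\mu_0,2)^{1/2}$ in the second. Together with the tail bound
\[
\int_{|\bx_\star|>R_x(t)} \mu_t(\di\bx_\star,\di\bw_\star)\le C_1(1+t)^{-D(\gamma_p-1)}
\]
from Lemma \ref{L4.1}, this yields
\[
|\mathcal I_{12}(t)|\le 2c\,\mathcal M_p(\mu_0,2)^{1/2}C_1^{1/2}(1+t)^{-\frac{D}{2}(\gamma_p-1)} + 2c\,\mathcal M_p(\mu_0,2)^{1/2} C_1(1+t)^{-D(\gamma_p-1)},
\]
and since $D(\gamma_p-1)\ge \tfrac{D}{2}(\gamma_p-1)$, the slower-decaying first term dominates. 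Absorbing constants into $C_4=C_4(c,\mathcal M_p(\mu_0,2),C_1)$ gives the claimed bound for $\mathcal I_{12}$. The bound for $\mathcal I_{13}$ follows by exactly the same argument after swapping the roles of $(\bx,\bw)$ and $(\bx_\star,\bw_\star)$, which produces the symmetric factor $\tfrac{C_4}{2}$.

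The main (and only) obstacle is a bookkeeping one: one must verify that the restriction in the tail region survives the change of variables from characteristics to $\mu_t$, which is immediate from the bijectivity of the flow and the push-forward identity \eqref{RKCS-4}. Everything else is a standard Cauchy--Schwarz split using the a priori propagation of the second moment and the polynomial tail estimate from Lemma \ref{L4.1}.
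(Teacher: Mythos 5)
Your proposal is correct and follows essentially the same argument as the paper: bound the inner product using $\bigl|\tfrac{\bw}{\tilde F}\bigr|\le c$, split $|\bw_\star|+|\bw|$, apply Cauchy--Schwarz together with the propagated second moment $\mathcal{M}_p(\mu_0,2)$ and the spatial tail bound $C_1(1+t)^{-D(\gamma_p-1)}$ from Lemma \ref{L4.1}, and keep the slower-decaying term of order $(1+t)^{-\frac{D}{2}(\gamma_p-1)}$. The only cosmetic difference is that you pass to the Eulerian picture (integrating against $\mu_t$ via the push-forward identity), whereas the paper stays in the Lagrangian picture with trajectories $(\bx(t),\bw(t))$ against $\mu_0$; the two are equivalent and the change of variables you invoke is exactly \eqref{RKCS-4}.
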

\begin{proof}
By symmetry, we have ${\mathcal I}_{12} = {\mathcal I}_{13}$. Hence, it suffices to derive a decay estimate for ${\mathcal I}_{12}$ only. To derive an polynomial decay of ${\mathcal I}_{12}$, we use the Cauchy-Schwarz inequality and simple estimates
\[  \Big |\frac{\bw}{{\tilde F}}  \Big |\le c \quad \mbox{and} \quad \Big |\frac{\bw_{\star}}{{\tilde F}_{\star}} \Big |\le c \]
 to see
\begin{align}
	\begin{aligned} \label{C-8}
		{\mathcal I}_{12} &=  \underline{\phi}(t)  \int_{\mathbb{R}^{2d}} \int_{|\bx_{\star}(t)|> R_x(t)}\left\langle \bw_{\star}(t)-\bw(t),\frac{\bw_{\star}(t)}{{\tilde F}_{\star}} - \frac{\bw(t)}{{\tilde F}} \right\rangle \mu_0(\di \bx,\di \bw) \mu_0(\di \bx_{\star},\di\bw_{\star})\\
		&\le 2c \underline{\phi}(t)  \int_{\mathbb{R}^{2d}}\int_{|\bx_{\star}(t)|> R_x(t)} \Big( | \bw_{\star}(t)|+|\bw(t)| \Big) \mu_0(\di \bx,\di \bw) \mu_0(\di \bx_{\star},\di\bw_{\star})\\
		&\le2c \underline{\phi}(t)  \left(\int_{\mathbb{R}^{2d}}|\bw(t)|^2 \mu_0(\di \bx,\di \bw)\right)^{\frac{1}{2}}\int_{|\bx_{\star}(t)|> R_x(t)}\mu_0(\di \bx_{\star},\di\bw_{\star})\\
		&+2c \underline{\phi}(t)  \left(\int_{\mathbb{R}^{2d}}|\bw(t)|^2 \mu_0(\di \bx,\di \bw)\right)^{\frac{1}{2}}\left(\int_{|\bx_{\star}(t)|> R_x(t)}\mu_0(\di \bx_{\star},\di\bw_{\star})\right)^{\frac{1}{2}}\\
	&	\le 2c \underline{\phi}(t)  \left(\mathcal{M}_p(\mu_0,2)\right)^{\frac{1}{2}}\left[ \left(\int_{|\bx_{\star}(t)|> R_x(t)}\mu_0(\di \bx_{\star},\di\bw_{\star})\right)^{\frac{1}{2}}+\int_{|\bx_{\star}(t)|> R_x(t)}\mu_0(\di \bx_{\star},\di\bw_{\star})\right ].
	\end{aligned}
\end{align}
Here,  in the third inequality, we used the Cauchy-Schwarz inequality to find 
\begin{align*}
\begin{aligned}
&\int_{\mathbb{R}^{2d}}\int_{|\bx_{\star}(t) |> R_x(t)}| \bw_{\star}(t)|\mu_0(\di \bx,\di \bw) \mu_0(\di \bx_{\star},\di\bw_{\star})\\
& \hspace{0.5cm} =\int_{|\bx_{\star}(t)|> R_x(t)}| \bw_{\star}(t)| \mu_0(\di \bx_{\star},\di\bw_{\star}) \cdot \underbrace{\int_{\mathbb{R}^{2d}}  \mu_0(\di \bx,\di \bw)}_{= 1} \\
& \hspace{0.5cm} \le\left(\int_{\mathbb{R}^{2d}}|\bw(t)|^2 \mu_0(\di \bx,\di \bw)\right)^{\frac{1}{2}}\left(\int_{|\bx_{\star}(t)|> R_x(t)}\mu_0(\di \bx_{\star},\di\bw_{\star})\right)^{\frac{1}{2}}
\end{aligned}
\end{align*}
and 
\begin{align*}
\begin{aligned}
	&\int_{\mathbb{R}^{2d}}\int_{|\bx_{\star}(t)|> R_x(t)}| \bw(t)|\mu_0(\di \bx,\di \bw) \mu_0(\di \bx_{\star},\di\bw_{\star})\\
	& \hspace{0.5cm} =\int_{|\bx_{\star}(t)|> R_x(t)}\mu_0(\di \bx_{\star},\di\bw_{\star})\int_{\mathbb{R}^{2d}}|\bw(t)| \mu_0(\di \bx,\di \bw)\\
	& \hspace{0.5cm} \le\left(\int_{\mathbb{R}^{2d}}|\bw(t)|^2 \mu_0(\di \bx,\di \bw)\right)^{\frac{1}{2}}\int_{|\bx_{\star}(t)|> R_x(t)}\mu_0(\di \bx_{\star},\di\bw_{\star}).
\end{aligned}
\end{align*}

\vspace{0.2cm}

\noindent On the other hand, it follows from Lemma \ref{L2.2} that 
\begin{equation} \label{C-9}
\int_{|\bx| \geq R_{\bx}(t)} \mu_t(\di \bx, \di \bw)  \leq C_1(1 + t)^{-D(\gamma_p - 1)}. 
\end{equation}
We use \eqref{C-8}, \eqref{C-9} and $\underline{\phi}(t) \leq 1$ to get the desired estimate:
\begin{align*}
\begin{aligned}
{\mathcal I}_{12}(t) &\le 2c \underline{\phi}(t)  \left(\mathcal{M}_p(\mu_0,2)\right)^{\frac{1}{2}} \left[ \left(\int_{|\bx_{\star}(t)|> R_x(t)}\mu_0(\di \bx_{\star},\di\bw_{\star})\right)^{\frac{1}{2}}+\int_{|\bx_{\star}(t)|> R_x(t)}\mu_0(\di \bx_{\star},\di\bw_{\star})\right ] \\
& \leq  2c \left(\mathcal{M}_p(\mu_0,2)\right)^{\frac{1}{2}}  \Big( \sqrt{C_1}(1 + t)^{-\frac{D}{2} (\gamma_p - 1)}   + C_1(1 + t)^{-D(\gamma_p - 1)}  \Big) \\
&\le  4c \left(\mathcal{M}_p(\mu_0,2)\right)^{\frac{1}{2}} \max\{ \sqrt{C_1}, C_1 \}  (1 + t)^{-\frac{1}{2} D(\gamma_p -1)}  \\
&=: \frac{C_4}{2} (1 + t)^{-\frac{1}{2} D(\gamma_p -1)}.
\end{aligned}
\end{align*}
The same analysis can be done for ${\mathcal I}_{13}$ as well. 
\end{proof}
\vspace{0.2cm}

\noindent Next, we return to the estimate of the term ${\mathcal I}_{11}$. It follows from Lemma \ref{L4.1} and Lemma \ref{L4.2} that
\begin{align} 
\begin{aligned} \label{C-10}
{\mathcal I}_{11} &= -\underline{\phi}(t) \int_{\mathbb{R}^{4d}}  \left\langle \bw_{\star}(t)- \bw(t),\frac{\bw_{\star}(t)}{{\tilde F}_{\star}} - \frac{\bw(t)}{{\tilde F}} \right\rangle \mu_0(\di \bx,\di \bw) \mu_0(\di \bx_{\star},\di\bw_{\star})\\	
& \leq  -\underline{\phi}(t)  \int_{ |\bw(t)|\le R_{w}(t)}\int_{|\bw_{\star}(t)|\le R_{w}(t)} \left\langle \bw_{\star}(t)- \bw(t),\frac{\bw_{\star}(t)}{{\tilde F}_{\star}} - \frac{\bw(t)}{{\tilde F}} \right\rangle \mu_0(\di \bx,\di \bw) \mu_0(\di \bx_{\star},\di\bw_{\star})\\	
& \le - C_3 \underline{\phi}(t) (1 + t)^{-3\delta_p} \int_{|\bw(t)|\le R_{w}(t)}\int_{|\bw_{\star}(t)|\le R_{w}(t)} | \bw_{\star}(t)-\bw(t)|^2\mu_0(\di \bx,\di \bw) \mu_0(\di \bx_{\star},\di\bw_{\star}) \\
&=-C_3 \underline{\phi}(t)  (1 + t)^{-3\delta_p} \int_{\mathbb{R}^{4d}} |\bw_{\star}(t)-\bw(t)|^2\mu_0(\di \bx,\di \bw) \mu_0(\di \bx_{\star},\di\bw_{\star})\\
&+ C_3 \underline{\phi}(t)  (1 + t)^{-3\delta_p} \int_{|\bw(t)|> R_{w}(t)}\int_{\mathbb{R}^{2d}} |\bw_{\star}(t)-\bw(t)|^2\mu_0(\di \bx,\di \bw) \mu_0(\di \bx_{\star},\di\bw_{\star})\\
&+ C_3 \underline{\phi}(t)  (1 + t)^{-3\delta_p} \int_{|\bw_{\star}(t)|> R_{w}(t)}\int_{\mathbb{R}^{2d}} |\bw_{\star}(t)-\bw(t)|^2\mu_0(\di \bx,\di \bw) \mu_0(\di \bx_{\star},\di\bw_{\star})\\
&=: {\mathcal I}_{111}+ {\mathcal I}_{112}+ {\mathcal I}_{113}.
\end{aligned}
\end{align}
In the following lemma, we estimate the term ${\mathcal I}_{11i},~i=1,2,3$ one by one. 
\begin{lemma} \label{L4.4}
There exist positive constants $C_5$ and $C_6$ such that 
\begin{align*}
\begin{aligned}
& (i)~{\mathcal I}_{111} =  -C_5(1 + t)^{-(\beta \gamma_p + 3 \delta_p)} {\mathcal L}. \\
& (ii)~{\mathcal I}_{112}  \leq  \frac{C_6}{2} (1 + t)^{- \delta_p \left(\frac{(l_1-1)D}{l_1}+ 3\right)}. \\
& (iii)~{\mathcal I}_{113} \leq  \frac{C_6}{2} (1 + t)^{-  \delta_p \left(\frac{(l_1-1)D}{l_1}+ 3\right)}. \\
& (iv)~{\mathcal I}_{11} \leq  -C_5(1 + t)^{-(\beta \gamma_p + 3 \delta_p)}  {\mathcal L}  +   C_6 (1 + t)^{-  \delta_p \left(\frac{(l_1-1)D}{l_1}+ 3\right)}.
\end{aligned}
\end{align*}
\end{lemma}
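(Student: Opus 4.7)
\medskip

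\noindent\emph{Proof proposal for Lemma \ref{L4.4}.}

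\textbf{Part (i).} The plan is to convert the double integral of $|\bw_\star(t)-\bw(t)|^2$ into a multiple of $\mathcal{L}(t)$ via the standard ``variance identity''
\[
\int_{\bbr^{4d}} |\bw_\star(t)-\bw(t)|^2 \mu_0(\di\bx,\di\bw)\mu_0(\di\bx_\star,\di\bw_\star) = 2 \int_{\bbr^{2d}} |\bw(t)-\bw_c|^2 \mu_0(\di\bx,\di\bw) = 2\mathcal{L}(t),
\]
which holds thanks to \eqref{RKCS-4} together with the conservation of total mass and momentum in Lemma \ref{L2.2} (expand the square and note the cross term vanishes because $\int \bw(t)\mu_0 = \bw_c$). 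Plugging this into the definition of $\mathcal{I}_{111}$ in \eqref{C-10} and inserting the lower bound $\underline{\phi}(t) \geq C_2(1+t)^{-\beta\gamma_p}$ from Lemma \ref{L4.1} yields
\[
\mathcal{I}_{111} \leq -2 C_2 C_3 (1+t)^{-(\beta\gamma_p+3\delta_p)} \mathcal{L}(t),
\]
which is the desired estimate with $C_5 := 2C_2 C_3$.

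\textbf{Parts (ii) and (iii).} These are symmetric in the roles of $(\bx,\bw)$ and $(\bx_\star,\bw_\star)$, so I focus on $\mathcal{I}_{112}$. The idea is to trade the weight $|\bw_\star(t)-\bw(t)|^2 \leq 2|\bw(t)|^2 + 2|\bw_\star(t)|^2$ against the small mass of the complement $\{|\bw(t)| > R_w(t)\}$, then use H\"older to split the resulting integrals. Using the push-forward identity \eqref{RKCS-4} with Corollary \ref{C2.1}(i) one gets
\[
\int_{|\bw(t)|>R_w(t)} \mu_0(\di\bx,\di\bw) = \int_{|\bw|>R_w(t)} \mu_t(\di\bx,\di\bw) \leq \mathcal{M}_p(\mu_0,D)\,R_w(t)^{-D} \lesssim (1+t)^{-D\delta_p}.
\]
For the quadratic factor I apply H\"older with exponent $l_1 > 1$:
\[
\int_{|\bw(t)|>R_w(t)} |\bw(t)|^2 \mu_0 \leq \Big(\int_{\bbr^{2d}} |\bw(t)|^{2l_1} \mu_0\Big)^{1/l_1} \Big(\int_{|\bw(t)|>R_w(t)} \mu_0\Big)^{(l_1-1)/l_1},
\]
where the first factor is bounded uniformly in $t$ by Lemma \ref{L2.4}(i) because $D \geq 2l_1$, and the second factor is bounded by $(1+t)^{-D\delta_p(l_1-1)/l_1}$. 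The term $\int |\bw_\star(t)|^2 \mu_0 \leq \mathcal{M}_p(\mu_0,2)$ is uniformly bounded by Lemma \ref{L2.4}(i) as well. Combining these with $\underline{\phi}(t) \leq 1$ and the prefactor $(1+t)^{-3\delta_p}$, the slower-decaying of the two resulting powers is $(1+t)^{-3\delta_p - D\delta_p(l_1-1)/l_1}$ (since $D\delta_p \geq D\delta_p(l_1-1)/l_1$ for $l_1 \geq 1$), which gives exactly the claimed bound after absorbing all constants into $C_6/2$. The argument for $\mathcal{I}_{113}$ is identical modulo renaming of variables.

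\textbf{Part (iv).} This is immediate by summing the bounds in (i)--(iii) and defining $C_6$ as the sum of the constants produced in (ii) and (iii).

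The main technical point to watch is the bookkeeping in part (ii): one must confirm that the dominant exponent after H\"older's inequality is indeed $\delta_p\!\big(\tfrac{(l_1-1)D}{l_1}+3\big)$ rather than the naive $\delta_p(D+3)$ coming from the product $\int|\bw_\star|^2\mu_0 \cdot \int_{|\bw|>R_w}\mu_0$. Since the H\"older-based bound decays more slowly, it is the one that actually governs $\mathcal{I}_{112}$, and the hypothesis $D \geq 2l_1$ is exactly what is required to make the moment factor finite. No deep new ingredient is needed beyond Lemmas \ref{L4.1}--\ref{L4.2} and the propagation/localization estimates of Section \ref{sec:2}.
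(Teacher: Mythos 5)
Your proposal matches the paper's proof step for step: part (i) via the variance identity from \eqref{C-11} combined with the lower bound $\underline{\phi}(t)\ge C_2(1+t)^{-\beta\gamma_p}$ from Lemma \ref{L4.1}; parts (ii)--(iii) via the splitting $|\bw_\star-\bw|^2\le 2(|\bw|^2+|\bw_\star|^2)$, the complement-mass bound from Corollary \ref{C2.1} for the $|\bw_\star|^2$ term, and H\"older with exponent $l_1$ plus the uniform $2l_1$-th moment bound from Lemma \ref{L2.4} for the $|\bw|^2$ term; and your observation that the H\"older-based exponent $\delta_p\big(\tfrac{(l_1-1)D}{l_1}+3\big)$ dominates $\delta_p(D+3)$ is exactly how the paper reconciles the two decay rates before setting $C_6$.
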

\begin{proof} (i) It follows from \eqref{RKCS-4} and Lemma \ref{L2.2} that 
\[
 \int_{\bbr^{2d}}\bw(t) \mu_0(\di \bx,\di \bw) = \int_{\bbr^{2d}}\bw \mu_t(\di \bx,\di \bw)  = \int_{\bbr^{2d}}\bw \mu_0(\di \bx,\di \bw)= \bw_c(0).
\]
On the other hand, since $\bw_c(t) =  \int_{\bbr^{2d}}\bw \mu_t(\di \bx,\di \bw)$, we have
\[
\int_{\bbr^{2d}}\bw(t) \mu_0(\di \bx,\di \bw)  = \bw_c(t), \quad \forall~t > 0. 
\]
This yields
\begin{equation} \label{C-11}
 \int_{\bbr^{2d}} (\bw(t) - \bw_c(t)) \mu_0(\di \bx,\di \bw)  = 0.
\end{equation}
Now, we use  \eqref{C-10} and \eqref{C-11} to see 
\begin{align*}
\begin{aligned} 
{\mathcal I}_{111} &=- C_3 \underline{\phi}(t) (1 + t)^{-3\delta_p} \int_{\mathbb{R}^{4d}} |\bw_{\star}(t)- \bw(t)|^2\mu_0(\di \bx,\di \bw) \mu_0(\di \bx_{\star},\di\bw_{\star})\\
&=-C_3 \underline{\phi}(t) (1 + t)^{-3\delta_p} \int_{\mathbb{R}^{4d}} |\bw_{\star}(t)-\bw_c(t)+\bw_c(t)-\bw(t) |^2\mu_0(\di \bx,\di \bw) \mu_0(\di \bx_{\star},\di\bw_{\star})\\
&=-2  C_3 \underline{\phi}(t) (1 + t)^{-3\delta_p} \int_{\mathbb{R}^{2d}} |\bw(t)-\bw_c(t)|^2\mu_0(\di \bx,\di \bw) \\
& = -2  C_3 \underline{\phi}(t) (1 + t)^{-3\delta_p} {\mathcal L} \\
&\leq -2  C_2 C_3 (1 + t)^{-(\beta \gamma_p + 3 \delta_p)} {\mathcal L} \\
&=: -C_5(1 + t)^{-(\beta \gamma_p + 3 \delta_p)} {\mathcal L}.
\end{aligned}
\end{align*}
\noindent (ii)~By symmetry, it is easy to see that 
\[ {\mathcal I}_{112} = {\mathcal I}_{113}. \]
Hence, it suffices to estimate the term ${\mathcal I}_{112}$. We use the inequality 
\[ |a - b|^2 \leq 2 (|a|^2 + |b|^2) \quad \mbox{and} \quad \underline{\phi} \leq 1 \]
to obtain
	\begin{align} 
		\begin{aligned} \label{C-12}
			{\mathcal I}_{112}&=  C_3 \underline{\phi}(t)  (1 + t)^{-3\delta_p}  \int_{|\bw|> R_w(t)}\int_{\mathbb{R}^{2d}} |\bw_{\star}(t)-\bw(t)|^2\mu_0(\di \bx,\di \bw) \mu_0(\di \bx_{\star},\di\bw_{\star}) \\
			&	\le 2 C_3(1 + t)^{-3\delta_p}   \int_{|\bw(t)|> R_w(t)}\int_{\mathbb{R}^{2d}} \Big(|\bw_{\star}(t)|^2+ |\bw(t)|^2 \Big) \mu_0(\di \bx,\di \bw) \mu_0(\di \bx_{\star},\di\bw_{\star}).
	\end{aligned}
	\end{align}	
Below, we estimate the terms in the right-hand side of \eqref{C-12} one by one. \newline

\noindent $\bullet$~Case A (Estimate of the first term):~We use the Cauchy-Schwarz inequality to derive 
\begin{align}
\begin{aligned} \label{C-13}
	&\int_{|\bw(t)|> R_w(t)}\int_{\mathbb{R}^{2d}} | \bw_{\star}(t)|^2\mu_0(\di \bx,\di \bw) \mu_0(\di \bx_{\star},\di\bw_{\star})\\
	& \hspace{0.5cm} = \Big( \int_{\bbr^{2d}}| \bw_{\star}(t)|^2 \mu_0(\di \bx_{\star},\di\bw_{\star}) \Big) \cdot \Big( \int_{|\bw(t)|> R_w(t)}  \mu_0(\di \bx,\di\bw) \Big)  \\
	& \hspace{0.5cm} \leq    {\mathcal M}_{p}(\mu_0, 2) \mathcal{M}_{p}(\mu_0,D){ (1 + t)^{-D\delta_p}},
\end{aligned}
\end{align}
where we used 
\[
\int_{\bbr^{2d}} | \bw_{\star}(t)|^2 \mu_0(\di \bx_{\star},\di\bw_{\star}) =\int_{\bbr^{2d}} | \bw_{\star}|^2 \mu_t(\di \bx_{\star},\di\bw_{\star}) \leq {\mathcal M}_{p}(\mu_0, 2).
\]
\noindent $\bullet$~Case B (Estimate of the second term): We use Lemma \ref{L2.4} and Corollary \ref{C2.1} to get 
\begin{align}
\begin{aligned} \label{C-144}
	&\int_{|\bw(t)|> R_w(t)}\int_{\mathbb{R}^{2d}} | \bw(t)|^2\mu_0(\di \bx,\di \bw) \mu_0(\di \bx_{\star},\di\bw_{\star})\\
	& \hspace{0.5cm} =\int_{|\bw(t)|> R_w(t)} |\bw(t)|^2 \mu_0(\di \bx,\di\bw)\int_{\mathbb{R}^{2d}}\mu_0(\di \bx_{\star},\di \bw_{\star})\\
	& \hspace{0.5cm} =\int_{|\bw(t)|> R_w(t)} |\bw(t)|^2 \mu_0(\di \bx,\di\bw) \\
	& \hspace{0.5cm} = \int_{|\bw|> R_w(t)} |\bw|^2 \mu_t(\di \bx,\di\bw) \\
	& \hspace{0.5cm} \leq \Big( \int_{|\bw|> R_w(t)} |\bw|^{2l_1} \mu_t(\di \bx,\di\bw) \Big)^{\frac{1}{l_1}} \cdot  \Big( \int_{|\bw|> R_w(t)}  \mu_t(\di \bx,\di\bw) \Big)^{1-\frac{1}{l_1}} \\
        & \hspace{0.5cm} \le \sqrt{\mathcal{M}_p(\mu_0, 2l_1) \mathcal{M}_{p}(\mu_0,D)} (1 + t)^{-\frac{(l_1-1)D \delta_p}{l_1}}.
\end{aligned}
\end{align}
In \eqref{C-12}, we combine all the estimates in \eqref{C-13} and \eqref{C-144} to find 
\begin{align*}
\begin{aligned}
		{\mathcal I}_{112} &\le 2 C_3(1 + t)^{-3\delta_p}  \\
		&\times \Big(  {\mathcal M}_{p}(\mu_0, 2) {\mathcal M}_p(\mu_0,D) (1 + t)^{-D\delta_p} +  \sqrt{\mathcal{M}_p(\mu_0, 2l_1) \mathcal{M}_{p}(\mu_0,D)} (1 + t)^{-\frac{(l_1-1)D \delta_p}{l_1}} \Big) \\
		&= 2C_3  \Big(  {\mathcal M}_{p}(\mu_0, 2) {\mathcal M}_p(\mu_0,D) (1 + t)^{-( (D + 3)\delta_p )} \\
		& \hspace{1.5cm} +  \sqrt{\mathcal{M}_p(\mu_0, 2l_1) \mathcal{M}_{p}(\mu_0,D)} (1 + t)^{- ( \delta_p (\frac{(l_1-1)D}{l_1} + 3) )} \Big) \\
		&\lesssim \underbrace{4C_3 \max \Big \{  {\mathcal M}_{p}(\mu_0, 2) {\mathcal M}_p(\mu_0,D),~ \sqrt{\mathcal{M}_p(\mu_0, 2l_1) \mathcal{M}_{p}(\mu_0,D)} \Big \} }_{=: \frac{C_6}{2}} (1 + t)^{-  \delta_p \left(\frac{(l_1-1)D}{l_1}+ 3\right)}.
	\end{aligned}
\end{align*}
\end{proof}
In \eqref{C-7}, we combine all the estimates in Lemma \ref{L4.3} and Lemma \ref{L4.4} to find  
\[
\frac{\di{\mathcal L}}{\di t} \leq  -C_5(1 + t)^{-(\beta \gamma_p + 3 \delta_p)}  {\mathcal L}  +   C_6 (1 + t)^{-  \delta_p \left(\frac{(l_1-1)D}{l_1}+ 3\right)} +  C_4  (1+t)^{-\frac{D}{2} (\gamma_p -1)}.
\]
\subsubsection{Zero asymptotic convergence of  ${\mathcal L}$} \label{sec:4.1.2}
Now, we are ready to apply Lemma \ref{L2.6} with 
\[  (y, p, q) \quad \Longleftrightarrow \quad  \Big ({\mathcal L}, C_5 (1 + t)^{-(3\delta_p + \beta \gamma_p)},   C_6 (1 + t)^{-  \delta_p \left(\frac{(l_1-1)D}{l_1}+ 3\right)} +  C_4  (1+t)^{-\frac{D}{2} (\gamma_p -1)} \Big) \]
to get 
\begin{align}
\begin{aligned}  \label{C-155}
{\mathcal L}(t) &\leq {\mathcal L}(0) \exp \Big[ - C_5 \int_0^t  (1 + \tau)^{-(3\delta_p + \beta \gamma_p)} \di\tau \Big] \\
& + \exp \Big[ - C_5 \int_{\frac{t}{2}}^t   (1 + \tau)^{- ( 3\delta_p + \beta \gamma_p)}  \di\tau \Big]\\
&\times \int_0^{\frac{t}{2}}  \Big[
C_6 (1 + \tau)^{-  \delta_p \left(\frac{(l_1-1)D}{l_1}+ 3\right)} +  C_4  (1+\tau)^{-\frac{D}{2} (\gamma_p -1)} \Big] \di\tau \\
& + \Big[   C_6 (1 + t/2)^{-  \delta_p \left(\frac{(l_1-1)D}{l_1}+ 3\right)} +  C_4  (1+t/2)^{-\frac{D}{2} (\gamma_p -1)}        \Big] \\
&\times\int_{\frac{t}{2}}^t \exp \Big[ -C_5 \int_s^t (1 + \tau)^{-(3\delta_p + \beta \gamma_p)} \di\tau \Big ] \di s \\
&=: {\mathcal I}_{21} + {\mathcal I}_{22} + {\mathcal I}_{23}.
\end{aligned}
\end{align}
\begin{lemma} \label{L4.5}
There exists a positive constant $C_7$ such that 
\begin{align*}
\begin{aligned}
& (i)~{\mathcal I}_{21} \leq  {\mathcal L}(0) \exp \Big[ - \frac{C_5}{1-(3\delta_p +\beta \gamma_p)}  \Big((1 + t)^{1-(3\delta_p +\beta \gamma_p)}-1 \Big) \Big].  \\
& (ii)~{\mathcal I}_{22} \leq  \exp \Big[ -\frac{C_5}{1-( 3\delta_p  + \beta \gamma_p)} \Big] \Big( (1 + t)^{1- (3\delta_p  + \beta \gamma_p)}  -(1 + t/2)^{1-( 3\delta_p + \beta \gamma_p)} \Big)\\
& \hspace{0.6cm} \times  \frac{C_6}{1-   \delta_p (\frac{(l_1-1)D }{l_1} + 3)} \Big[  (1 + t/2)^{1-  \delta_p (\frac{(l_1-1)D }{l_1} + 3)}  -1  \Big] \cdot \frac{C_4}{1- \frac{D}{2} (\gamma_p -1) } \Big[ (1 + t/2)^{1- (\frac{D}{2} (\gamma_p -1))}  -1  \Big]. \\
& (iii)~{\mathcal I}_{23} \leq C_7 \Big[   C_6 (1 + t/2)^{-  \delta_p (\frac{(l_1-1)D }{l_1} + 3)} +  C_4  (1+t/2)^{-\frac{D}{2} (\gamma_p -1)}        \Big].
\end{aligned}
\end{align*}
\end{lemma}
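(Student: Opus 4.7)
The three sub-estimates in Lemma \ref{L4.5} all reduce to explicit elementary computations built from the antiderivative of $\tau \mapsto (1+\tau)^{-a}$ for suitable exponents $a$, combined with a careful handling of the Gronwall-type response integral. The admissibility conditions $0\leq\beta<1$, $\gamma_p>1$, and $0<\delta_p<(1-\beta\gamma_p)/3$ imposed in Theorem \ref{T3.1} force $3\delta_p+\beta\gamma_p\in[0,1)$, so every primitive appearing in parts (i) and (ii) can be written in closed polynomial form.

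For (i), I would simply evaluate
\[
\int_0^t (1+\tau)^{-(3\delta_p+\beta\gamma_p)}\,\di\tau = \frac{(1+t)^{1-(3\delta_p+\beta\gamma_p)}-1}{1-(3\delta_p+\beta\gamma_p)}
\]
and substitute into the definition of $\mathcal{I}_{21}$. For (ii), the exponential prefactor on $[t/2,t]$ is obtained by the same antiderivative restricted to that interval, and the two polynomial source terms $C_6(1+\tau)^{-\delta_p(\frac{(l_1-1)D}{l_1}+3)}$ and $C_4(1+\tau)^{-\frac{D}{2}(\gamma_p-1)}$ are integrated separately on $[0,t/2]$. Since the two source exponents may lie on either side of $1$, I would split cases: in the sub-integrable regime the antiderivative $\tfrac{1}{1-a}((1+t/2)^{1-a}-1)$ matches the claim directly, while in the integrable regime $a\geq 1$ the integral is bounded by a fixed constant and the resulting estimate is strictly sharper, hence absorbed after relabeling constants.

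For (iii), the subtlety lies in the response integral
\[
\int_{t/2}^t \exp\Big[-C_5\int_s^t (1+\tau)^{-(3\delta_p+\beta\gamma_p)}\,\di\tau\Big]\,\di s.
\]
The naive pointwise estimate $\exp[\cdots]\leq 1$ would produce only a bound of order $t$, which is insufficient. My plan is to exploit the monotonicity of $\tau\mapsto(1+\tau)^{-(3\delta_p+\beta\gamma_p)}$ to obtain $\int_s^t (1+\tau)^{-(3\delta_p+\beta\gamma_p)}\,\di\tau \geq (t-s)(1+t)^{-(3\delta_p+\beta\gamma_p)}$, reducing the inner exponential to a pure exponential in $t-s$. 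An elementary computation then yields
\[
\int_{t/2}^t \exp\Big[-C_5(t-s)(1+t)^{-(3\delta_p+\beta\gamma_p)}\Big]\,\di s \leq \frac{(1+t)^{3\delta_p+\beta\gamma_p}}{C_5}.
\]
Factoring the bracket $C_6(1+t/2)^{-\delta_p(\frac{(l_1-1)D}{l_1}+3)}+C_4(1+t/2)^{-\frac{D}{2}(\gamma_p-1)}$ out of the integral and using $(1+t)\leq 2(1+t/2)$ together with the dominance of these bracket exponents over $3\delta_p+\beta\gamma_p$ (guaranteed by the large-$D$ hypothesis of Theorem \ref{T3.1}) allows the residual $(1+t)^{3\delta_p+\beta\gamma_p}$ factor to be absorbed into a $t$-independent constant $C_7$.

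The main obstacle is precisely the last step: recovering a decaying response estimate in (iii) without picking up a $t$-linear factor. Replacing the variable-rate integrand by its infimum on $[s,t]$ is the only place where the strict inequality $3\delta_p+\beta\gamma_p<1$ is genuinely exploited, and this is what converts what would otherwise be an $O(t)$ bound into an elementary exponential integration whose value is controlled by a single reciprocal-rate factor.
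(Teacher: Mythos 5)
Your parts (i) and (ii) match the paper's proof: both just evaluate the antiderivative of $\tau\mapsto(1+\tau)^{-a}$ on the relevant intervals, with the admissibility conditions ensuring $3\delta_p+\beta\gamma_p<1$ so every primitive is of the form $\tfrac{1}{1-a}((1+t)^{1-a}-1)$. (One small note: for the two source terms in (ii) the lemma is stated with the polynomial primitive regardless of whether the exponent is below or above $1$; your case split in the integrable regime is harmless but unnecessary for matching the stated inequality.)

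For (iii), however, your own computation actually uncovers a genuine gap, and your proposed patch does not close it. You correctly show
\[
\int_{t/2}^t \exp\Big[-C_5\int_s^t (1+\tau)^{-(3\delta_p+\beta\gamma_p)}\,\di\tau\Big]\,\di s \;\leq\; \frac{(1+t)^{3\delta_p+\beta\gamma_p}}{C_5},
\]
and in fact this bound is sharp up to a constant: expanding near $s=t$, the integrand is $\gtrsim 1/2$ on a window of width $\sim (1+t)^{3\delta_p+\beta\gamma_p}/C_5$, so the response integral genuinely grows like $(1+t)^{3\delta_p+\beta\gamma_p}$ and is \emph{not} bounded by a $t$-independent constant whenever $3\delta_p+\beta\gamma_p>0$. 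The lemma statement, $\mathcal{I}_{23}\leq C_7[\text{bracket}]$, is equivalent to precisely that $t$-independent bound, since $\mathcal{I}_{23}$ is by definition the bracket times the response integral. The paper simply asserts the bound $\leq C_7$ without computation; your calculation shows this assertion is not justified. The fatal step in your write-up is the claim that the residual $(1+t)^{3\delta_p+\beta\gamma_p}$ can be ``absorbed into a $t$-independent constant $C_7$'' because the bracket exponents dominate. A factor that grows in $t$ cannot be absorbed into a constant; what your argument really gives is
\[
\mathcal{I}_{23}\lesssim C_6(1+t/2)^{-\delta_p(\frac{(l_1-1)D}{l_1}+3)+(3\delta_p+\beta\gamma_p)}+C_4(1+t/2)^{-\frac{D}{2}(\gamma_p-1)+(3\delta_p+\beta\gamma_p)},
\]
which is a weaker estimate with a degraded decay rate, not the estimate claimed in the lemma. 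This propagates: the rate in Proposition \ref{P4.1} would deteriorate by $3\delta_p+\beta\gamma_p$, and the threshold $>2$ in Theorem \ref{T3.1}(2) needed for integrability of $\sqrt{\mathcal{L}}$ would need to become $>2+3\delta_p+\beta\gamma_p$. You should either flag that the lemma as stated requires revision, or redo the downstream bookkeeping with the corrected exponents; pretending the growing factor disappears into $C_7$ is the one step that does not survive scrutiny.
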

\begin{proof}
\noindent (i)~Note that 
\[
\exp\Big[  - C_5 \int_0^t  (1 + \tau)^{-(3\delta_p + \beta \gamma_p)} \di\tau       \Big ] =  \exp \Big[ - \frac{C_5}{1-(3\delta_p + \beta \gamma_p)} \Big((1 + t)^{1-(3\delta_p +\beta \gamma_p)}-1 \Big) \Big].
\]
\vspace{0.2cm}

\noindent (ii)~ Note that 
\begin{align*}
\begin{aligned}
& \exp \Big[ - C_5 \int_{\frac{t}{2}}^t   (1 + \tau)^{- ( 3\delta_p + \beta \gamma_p)}  \di\tau \Big]  \\
& \hspace{1cm} = \exp \Big[-\frac{C_5}{1-( 3\delta_p  + \beta \gamma_p)} \Big( (1 + t)^{1- (3\delta_p  + \beta \gamma_p) } -(1 + t/2)^{1-( 3\delta_p + \beta \gamma_p)} \Big) \Big], \\
&  C_6 \int_0^{\frac{t}{2}}  (1 + \tau)^{-  \delta_p (\frac{(l_1-1)D }{l_1} + 3)}  \di\tau = \frac{C_6}{1-  \delta_p (\frac{(l_1-1)D }{l_1} + 3)} \Big[  (1 + t/2)^{1-  \delta_p (\frac{(l_1-1)D }{l_1} + 3)}  -1  \Big], \\
& C_4 \int_0^{\frac{t}{2}}   (1+\tau)^{-\frac{D}{2} (\gamma_p -1)} \di\tau = \frac{C_4}{1- \frac{D}{2} (\gamma_p -1) } \Big[ (1 + t/2)^{1- (\frac{D}{2} (\gamma_p -1))}  -1  \Big].
\end{aligned} 
\end{align*}
We collect the above estimates to find the desired estimates. 
\vspace{0.2cm}

\noindent (iii)~Note that 
\begin{align*}
\begin{aligned}
& \int_{\frac{t}{2}}^t \exp \Big[ -C_5 \int_s^t (1 + \tau)^{-(3\delta_p + \beta \gamma_p)} \di\tau \Big] \di s \\
& \hspace{1cm} =   \int_{\frac{t}{2}}^t  \exp \Big[- \frac{C_5}{1-(3\delta_p + \beta \gamma_p)} \left((1 + t)^{1-(3\delta_p + \beta \gamma_p)}-(1 + s)^{1-(3\delta_p + \beta \gamma_p)} \right)\Big] \di s \\
& \hspace{1cm} \leq C_7.
\end{aligned}
\end{align*}
\end{proof}
In \eqref{C-155}, we use estimates in Lemma \ref{L4.5} to find the time-decay estimate of ${\mathcal L}$. 
\begin{proposition} \label{P4.1}
Suppose that $l_1, D, \beta, \gamma_p$ and $\delta_p$ satisfy
\begin{equation} \label{C-1-15}
l_1>1,\quad D \geq 2l_1, \quad 0 \leq \beta < 1, \quad \gamma_p > 1, \quad 0 \leq \beta \gamma_p <1, \quad  0 <  \delta_p  < \frac{1-\beta \gamma_p}{3},
\end{equation}
and let $\mu \in L^{\infty}([0,T);\mathcal{P}(\mathbb{R}^{2d}))$ be a measure-valued solution to \eqref{RKCS} with the initial datum $\mu_0$ satisfying
\[  \int_{\mathbb{R}^{2d}}  \mu_0(\di \bx,\di \bw) = 1, \qquad {\mathcal M}_p(\mu_0, D) < \infty. \]
Then, the velocity alignment functional ${\mathcal L}$ satisfies 
\[
{\mathcal L}(t) \leq {\mathcal O}(1) (1 + t)^{-\min \left\{   \delta_p \left(\frac{(l_1-1)D }{l_1} + 3\right),~\frac{D}{2} (\gamma_p -1)  \right\}}, \quad t \gg 1.
\]
\end{proposition}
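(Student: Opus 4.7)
The plan is to feed the differential inequality derived at the end of Section \ref{sec:4.1.1},
\[
\frac{\di{\mathcal L}}{\di t} \leq -C_5(1+t)^{-(3\delta_p+\beta\gamma_p)}{\mathcal L} + C_6(1+t)^{-\delta_p(\frac{(l_1-1)D}{l_1}+3)} + C_4(1+t)^{-\frac{D}{2}(\gamma_p-1)},
\]
into the Grönwall-type Lemma \ref{L2.6} with the identifications
\[
p(t) := C_5(1+t)^{-(3\delta_p+\beta\gamma_p)}, \qquad q(t) := C_6(1+t)^{-\delta_p(\frac{(l_1-1)D}{l_1}+3)} + C_4(1+t)^{-\frac{D}{2}(\gamma_p-1)}.
\]
Both $p$ and $q$ are manifestly non-negative and non-increasing under \eqref{C-1-15}, so the lemma yields the three-term decomposition ${\mathcal L}(t) \leq {\mathcal I}_{21}(t) + {\mathcal I}_{22}(t) + {\mathcal I}_{23}(t)$ already recorded in \eqref{C-155}, and I would then estimate each piece using the bounds listed in Lemma \ref{L4.5}.

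The key structural remark is that the constraint $\delta_p < (1-\beta\gamma_p)/3$ in \eqref{C-1-15} is equivalent to $3\delta_p + \beta\gamma_p < 1$, so the primitive $\int_0^t p(\tau)\di\tau$ diverges algebraically like $(1+t)^{1-(3\delta_p+\beta\gamma_p)}$. Consequently the factors $\exp\bigl[-\int_0^t p(\tau)\di\tau\bigr]$ appearing in ${\mathcal I}_{21}$ and $\exp\bigl[-\int_{t/2}^t p(\tau)\di\tau\bigr]$ appearing in ${\mathcal I}_{22}$ decay faster than any polynomial in $t$. Since the polynomial prefactors multiplying them via Lemma \ref{L4.5}(i)--(ii) grow only algebraically, both ${\mathcal I}_{21}$ and ${\mathcal I}_{22}$ are super-polynomially small and are therefore absorbed into the desired polynomial bound.

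The dominant contribution comes from ${\mathcal I}_{23}$. By Lemma \ref{L4.5}(iii) it is controlled by the prefactor $q(t/2)$ times the integral $\int_{t/2}^t \exp[-\int_s^t p(\tau)\di\tau]\di s$; a change of variables $u=(1+s)^{1-(3\delta_p+\beta\gamma_p)}$ reduces the latter to a one-sided exponential integral, and pushing the remaining algebraic weight into the final estimate gives
\[
{\mathcal I}_{23} \lesssim C_6 (1+t)^{-\delta_p(\frac{(l_1-1)D}{l_1}+3)} + C_4 (1+t)^{-\frac{D}{2}(\gamma_p-1)}.
\]
Bounding this sum by twice the slower-decaying summand produces exactly the asserted ${\mathcal L}(t) \lesssim (1+t)^{-\min\{\delta_p(\frac{(l_1-1)D}{l_1}+3),\,\frac{D}{2}(\gamma_p-1)\}}$ for $t \gg 1$.

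The main delicacy is the quantitative comparison between the super-polynomially decaying tails in ${\mathcal I}_{21},{\mathcal I}_{22}$ and the algebraic tail in ${\mathcal I}_{23}$: one must verify that the algebraic factor of order $(1+t)^{3\delta_p+\beta\gamma_p}$ generated by the change of variables in ${\mathcal I}_{23}$ does not overwhelm the decay supplied by $q(t/2)$. This is precisely where the restrictions $\gamma_p > 1$, $3\delta_p + \beta\gamma_p < 1$, and $D \geq 2l_1$ enter: they guarantee that the two competing exponents $\delta_p\bigl(\frac{(l_1-1)D}{l_1}+3\bigr)$ and $\frac{D}{2}(\gamma_p-1)$ are both strictly positive and, after absorption of the $(1+t)^{3\delta_p+\beta\gamma_p}$ prefactor into the constant for $t$ sufficiently large, still dominate in the final bound. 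Once these rate comparisons are carried out, the proposition follows.
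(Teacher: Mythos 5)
Your proposal mirrors the paper's own proof at every step: the same Gr\"onwall-type Lemma~\ref{L2.6}, the same decomposition into $\mathcal{I}_{21},\mathcal{I}_{22},\mathcal{I}_{23}$ from \eqref{C-155}, and the same observation that the first two pieces decay super-polynomially because $3\delta_p+\beta\gamma_p<1$. The decisive step, however, is the bound on $\mathcal{I}_{23}$, and there your argument has a genuine gap. You correctly note that the change of variables in
\[
\int_{t/2}^t\exp\Big[-C_5\int_s^t(1+\tau)^{-(3\delta_p+\beta\gamma_p)}\,\di\tau\Big]\,\di s
\]
produces an algebraic weight $(1+t)^{3\delta_p+\beta\gamma_p}$; indeed, writing $\sigma:=1-(3\delta_p+\beta\gamma_p)\in(0,1)$, the concavity estimate $(1+t)^\sigma-(1+s)^\sigma\ge\sigma(1+t)^{\sigma-1}(t-s)$ combined with the reverse inequality at $s=t/2$ shows this integral is bounded \emph{above and below} by constant multiples of $(1+t)^{1-\sigma}=(1+t)^{3\delta_p+\beta\gamma_p}$, so it genuinely diverges as $t\to\infty$. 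You then assert that this prefactor ``does not overwhelm the decay supplied by $q(t/2)$'' and can be ``absorbed into the constant for $t$ sufficiently large.'' It cannot: a factor that grows without bound is not absorbable into a constant. What the argument actually produces is
\[
\mathcal{I}_{23}\lesssim(1+t)^{3\delta_p+\beta\gamma_p-\min\left\{\delta_p\left(\frac{(l_1-1)D}{l_1}+3\right),\ \frac{D}{2}(\gamma_p-1)\right\}},
\]
which is a strictly weaker rate than the one Proposition~\ref{P4.1} claims, and under the bare hypotheses \eqref{C-1-15} the exponent need not even be negative (take $\beta\gamma_p$ close to $1$, $\delta_p$ correspondingly tiny, and $D=2l_1$ with $l_1$ close to $1$).

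For what it is worth, you have reproduced the paper's reasoning faithfully: the proof of Lemma~\ref{L4.5}(iii) in the paper simply asserts $\int_{t/2}^t\exp[\cdots]\,\di s\le C_7$ with no justification, and that assertion is false whenever $3\delta_p+\beta\gamma_p>0$. So the gap is shared, but it is still a gap. A correct version of this argument needs either an extra hypothesis of the form $\min\{\delta_p(\frac{(l_1-1)D}{l_1}+3),\ \frac{D}{2}(\gamma_p-1)\}>3\delta_p+\beta\gamma_p$ with the stated rate weakened by $3\delta_p+\beta\gamma_p$, or a sharper treatment of the memory integral in $\mathcal{I}_{23}$ than the crude bound $q(t/2)\cdot\sup$ used by Lemma~\ref{L2.6}.
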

\begin{remark}
Note that for $\beta = 1$, the fourth and sixth conditions in \eqref{C-1-15} cannot be compatible.
\end{remark}
\subsection{Spatial cohesiveness}  \label{sec:4.2}
In this subsection, we show that the spatial cohesiveness holds:
\[
\sup_{0 \leq t < \infty} \int_{\mathbb{R}^{2d}} \Big |\bx(t)-\frac{\bw_c(0)}{{\tilde F}}t-\bx_c(0) \Big |^2\mu_0(\di \bx,\di \bw) < \infty.
\]
Next, we use the Cauchy-Schwarz inequality to derive
\begin{align}\label{X1}
	\begin{aligned}
		&\dfrac{\di}{\di t} \int_{\mathbb{R}^{2d}} \Big |\bx(t)-\frac{\bw_c(0)}{{\tilde F}}t-\bx_c(0) \Big |^2\mu_0(\di \bx,\di \bw)\\ 
                 & = 2 \int_{\mathbb{R}^{2d}} \Big \langle \bx(t)-\frac{\bw_c(0)}{{\tilde F}}t-\bx_c(0),  \frac{\bw(t)}{{\tilde F}}-\frac{\bw_c(0)}{{\tilde F}} \Big \rangle\mu_0(\di \bx,\di \bw)\\ 
		&\le2\left(\int_{\mathbb{R}^{2d}} \Big |\bx(t)-\frac{\bw_c(0)}{{\tilde F}}t-\bx_c(0) \Big |^2\mu_0(\di \bx,\di \bw)\right)^{\frac{1}{2}}\left(\int_{\mathbb{R}^{2d}} \Big  |\frac{\bw(t)}{{\tilde F}}-\frac{\bw_c(0)}{{\tilde F}} \Big |^2\mu_0(\di \bx,\di \bw)\right)^{\frac{1}{2}}\\
		&\le2\left(\int_{\mathbb{R}^{2d}} |\bx(t)-\frac{\bw_c(0)}{{\tilde F}}t-\bx_c(0)|^2\mu_0(\di \bx,\di \bw)\right)^{\frac{1}{2}}\left(\int_{\mathbb{R}^{2d}} |\bw(t)-\bw_c(0)|^2\mu_0(\di \bx,\di \bw)\right)^{\frac{1}{2}}.
	\end{aligned}
\end{align}
This implies 
\begin{align}\label{X2}
	\begin{aligned}
& \dfrac{\di}{\di t}\left(\int_{\mathbb{R}^{2d}} |\bx(t)-\frac{\bw_c(0)}{{\tilde F}}t-\bx_c(0) |^2\mu_0(\di \bx,\di \bw)\right)^{\frac{1}{2}} \\
&\qquad\qquad\le\left(\int_{\mathbb{R}^{2d}}|\bw(t)-\bw_c(0)|^2\mu_0(\di \bx,\di \bw)\right)^{\frac{1}{2}} = \sqrt{{\mathcal L}(t)}.
	\end{aligned}
\end{align}
Then, we integrate the above relation from $0$ to $t$ to find 
\begin{align*}
\begin{aligned} \label{C-14}
& \left(\int_{\mathbb{R}^{2d}} |\bx(t)-\frac{\bw_c(0)}{{\tilde F}}t-\bx_c(0) |^2\mu_0(\di \bx,\di \bw)\right)^{\frac{1}{2}} \\
& \qquad\quad\leq \left(\int_{\mathbb{R}^{2d}} |\bx-\bx_c(0) |^2\mu_0(\di \bx,\di \bw)\right)^{\frac{1}{2}} + \int_0^t  \sqrt{{\mathcal L}(s)} \di s.
\end{aligned}
\end{align*}
Thus, spatial cohesiveness depends on the integrability of $\sqrt{\mathcal{L}}$.\newline 

Note that Proposition \ref{P4.1} implies
\[
\sqrt{{\mathcal L}(t)} \lesssim (1 + t)^{-\frac{1}{2} \min \left\{  \delta_p \left(\frac{(l_1-1)D}{l_1} + 3\right),~\frac{D}{2} (\gamma_p -1)  \right\}}, \quad t \gg 1.
\]
If 
\[ \min \left\{  \delta_p \left(\frac{(l_1-1)D}{l_1} + 3\right),~\frac{D}{2} (\gamma_p -1)  \right\} > 2, \]
then $\sqrt{{\mathcal L}(t)}$ is integrable and we have the desired spatial cohesive estimate. 

\section{Emergence of weak flocking II}\label{sec:5}
\setcounter{equation}{0}
In this section, we provide the emergence of weak flocking estimates for the exponentially decaying distributions. Again, we first introduce gauge functions:
\begin{align}
\begin{aligned}  \label{E-0}
& R_x(t) :=  R^0_x +  \gamma_e t \quad \mbox{for some}~~\gamma_e > c, \\
& R_{w}(t) :=   R_{w}^0(1 + t)^{\delta_e}, \quad \mbox{for some $\delta_e > 0$}, \\
&  \Omega_{\mathrm{eff}}(t) := \Big \{ (\bx, \bw) \in \bbr^{2d}:~|\bx| \leq R_x(t), \quad |\bw| \leq R_w(t) \Big \}.
\end{aligned}
\end{align}
Note that compared to polynomially decaying functions, $R_x$ has a linear growth. Since most analysis in the previous section can be done for our current case, we only point out some differences, whenever we need to mention them.

\subsection{Velocity alignment} \label{sec:5.1} 
In this subsection, we provide several preparatory estimates for the decay of ${\mathcal L}$. First, we provide a series of lemmas.
\begin{lemma} \label{L5.1}
For $\alpha>0$, let $\mu \in L^{\infty}([0,T);\mathcal{P}(\mathbb{R}^{2d}))$ be a measure-valued solution to \eqref{RKCS} with initial datum $\mu_0$ satisfying
\[  \int_{\mathbb{R}^{2d}}  \mu_0(\di \bx,\di \bw) = 1, \quad  {\mathcal M}_e(\mu_0, \alpha)  < \infty. \]
Then, there exist positive constants ${\tilde C}_1 = {\tilde C}_1(c, {\mathcal M}_e(\mu_0,\alpha), R_x^0, \alpha)$ and ${\tilde C}_2 = {\tilde C}_2(\beta, R_x^0, \gamma_e)$  such that 
\begin{align}\label{E-0-1}
	\int_{|\bx| \geq R_x(t)} \mu_t(\di \bx, \di \bw) \leq {\tilde  C}_1e^{-\alpha(\gamma_e -c)t}, \quad {\tilde C}_2 (1+t)^{-\beta} \leq  \underline{\phi} (t) \leq 1 \quad \mbox{for}~ t > 0.
\end{align}
\end{lemma}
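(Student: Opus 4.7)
\textbf{Proof plan for Lemma \ref{L5.1}.} The lemma is the exponential counterpart of Lemma \ref{L4.1}, and the strategy is to feed the new effective radii \eqref{E-0} into the estimates already prepared in Section \ref{sec:2}. In particular, both assertions reduce to straightforward calculations once the right bookkeeping is in place, so I expect no serious obstacle; the delicate point is only to make sure the constants are genuinely independent of $t$.

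For the first estimate, I would invoke Corollary \ref{C2.2}(ii) directly with $R_x(t) = R_x^0 + \gamma_e t$. This yields
\[
\int_{|\bx_\star|\ge R_x(t)} \mu_t(\di \bx_\star,\di \bw_\star) \;\le\; \mathcal{M}_e(\mu_0,\alpha)\, e^{\alpha(ct - R_x(t))} \;=\; \mathcal{M}_e(\mu_0,\alpha)\,e^{-\alpha R_x^0}\,e^{-\alpha(\gamma_e-c)t},
\]
and since the hypothesis $\gamma_e > c$ makes the exponent negative, one sets
\[
{\tilde C}_1 \;:=\; \mathcal{M}_e(\mu_0,\alpha)\,e^{-\alpha R_x^0},
\]
which depends only on $c$, $\mathcal{M}_e(\mu_0,\alpha)$, $R_x^0$, $\alpha$, as claimed. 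Note that compared to the polynomial case, here the decay in $t$ is genuinely exponential, reflecting the exponential integrability of the initial datum.

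For the second estimate, the upper bound $\underline\phi(t)\le 1$ is immediate from the defining formula \eqref{A-3-1}, which gives $\phi(s)\le 1$ for all $s\ge 0$ and $\beta\ge 0$. For the lower bound, I would use that any two points $\bx(t),\bx_\star(t)\in B_{R_x(t)}$ satisfy $|\bx(t)-\bx_\star(t)|\le 2R_x(t)$, and since $\phi$ is decreasing,
\[
\underline\phi(t) \;\ge\; \phi\bigl(2R_x(t)\bigr) \;=\; \frac{1}{\bigl(1+4(R_x^0+\gamma_e t)^2\bigr)^{\beta/2}}.
\]
The denominator has only linear growth in $t$ this time (as opposed to the super-linear growth encountered in Lemma \ref{L4.1}), so an elementary estimate of the form
\[
\bigl(1+4(R_x^0+\gamma_e t)^2\bigr)^{\beta/2} \;\le\; C(\beta,R_x^0,\gamma_e)\,(1+t)^{\beta}, \qquad t>0,
\]
delivers $\underline\phi(t)\ge {\tilde C}_2(1+t)^{-\beta}$ with ${\tilde C}_2$ depending on $\beta,R_x^0,\gamma_e$ as asserted. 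This is the quantitative improvement over the polynomial setup, where the factor was $(1+t)^{-\beta\gamma_p}$ with $\gamma_p>1$; the linear choice of $R_x(t)$ here buys us a smaller exponent, which is precisely what will allow the weaker parameter constraint $0<\delta_e<(1-\beta)/3$ in Theorem \ref{T3.2} (as opposed to $0<\delta_p<(1-\beta\gamma_p)/3$ in Theorem \ref{T3.1}).
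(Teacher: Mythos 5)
Your argument matches the paper's proof in both structure and detail: part (i) is Corollary \ref{C2.2}(ii) with $R_x(t)=R_x^0+\gamma_e t$, and part (ii) uses the monotonicity of $\phi$ together with $|\bx(t)-\bx_\star(t)|\le 2R_x(t)$ and the elementary bound $(1+4(R_x^0+\gamma_e t)^2)^{\beta/2}\lesssim (1+t)^\beta$. No gaps; the concluding remark correctly identifies why linear growth of $R_x$ yields exponent $\beta$ instead of $\beta\gamma_p$.
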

\begin{proof}
\noindent (i)~ It follows from Corollary \ref{C2.2} and \eqref{E-0} that 
\begin{align}
\begin{aligned} \label{E-1}
& \int_{|\bx |\ge R_x(t)} \mu_t(\di \bx, \di \bw) \\
& \hspace{1cm} \le \mathcal{M}_{e}(\mu_0, \alpha) e^{\alpha (ct-R_x(t))} = \mathcal{M}_{e}(\mu_0, \alpha) e^{-\alpha R_x^0} e^{-\alpha(\gamma_e -c)t} \to 0, \quad \mbox{as $t \to \infty$}. 
\end{aligned}
\end{align}
\noindent (ii)~Consider two particle trajectories inside the effective domain:
\[ |\bx(t)|\le R_x(t), \quad |\bx_{\star}(t)| \leq R_x(t). \]
Then, we have
\begin{align*}
\begin{aligned}
& | \bx(t)-\bx_{\star}(t)|\le 2 R_x(t) \quad \mbox{and} \\
&  \underline{\phi} (t) :=  \inf\limits_{\bx(t),\bx_{\star}(t) \in B_{R_x(t)}} \phi(|\bx(t)-\bx_{\star}(t)|) \\
& \hspace{1cm} \ge \phi(2R_x(t))  = \frac{1}{  \Big(1+  4 R_x^2(t)  \Big )^{\frac{\beta}{2}}} \gtrsim \frac{1}{ |\max\{1 + 8 |R_x^0|^2,  8 \gamma_e^2 \}|^{\frac{\beta}{2}} } (1+t)^{-\beta},
\end{aligned}
\end{align*}
where we use the relation:
\[ \Big(1+  4 R_x^2(t)  \Big )^{\frac{\beta}{2}} =   \Big( 1 + 4 (R^0_x +  \gamma_e t)^2 \Big)^{\frac{\beta}{2}}  \lesssim  |\max\{1 + 8 |R_x^0|^2,  8 \gamma_e^2 \}|^{\frac{\beta}{2}} (1 + t)^{\beta}.    \]
\end{proof}
\begin{remark}
	Note that the exponential decay of 
	\[	\int_{|\bx| \geq R_x(t)} \mu_t(\di \bx, \di \bw)\]
	in \eqref{E-0-1} is different from the case of the polynomial decay distributions.
\end{remark}
\begin{lemma} \label{L5.2}
For $\alpha>0$, let $\mu \in L^{\infty}([0,T);\mathcal{P}(\mathbb{R}^{2d}))$ be a measure-valued solution to \eqref{RKCS} with initial datum $\mu_0$ satisfying
\[  \int_{\mathbb{R}^{2d}}  \mu_0(\di \bx,\di \bw) = 1, \quad {\mathcal M}_e(\mu_0, \alpha) = \int_{\mathbb{R}^{2d}} e^{\alpha\left(|\bx|+ |\bw|\right)} \mu_0(\mathrm{\di}\bx, \mathrm{\di }\bw) < \infty, \]
and let $\bw(t)$ and $\bw_{\star}(t)$ be particle trajectories such that 
\[  |\bw(t) |\le R_{w}(t), \quad  |\bw_{\star}(t)| \le R_{w}(t). \]
Then, there exists a positive constant ${\tilde C}_3 =  {\tilde C}_3 (R_w^0)$ such that 
\[ \left\langle \bw_{\star}(t)- \bw(t),\frac{\bw_{\star}(t)}{{\tilde F}_{\star}} - \frac{\bw(t)}{{\tilde F}} \right\rangle  \geq {\tilde C}_3 (1 + t)^{-3\delta_e} |\bw_{\star}(t)- \bw(t)|^2. \]
\end{lemma}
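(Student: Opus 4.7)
The plan is to reduce the claim to a direct application of Lemma \ref{L2.3}, followed by a quantitative bookkeeping of the constant $\Lambda(R)$ supplied by Remark \ref{R2.3}. Structurally the argument is identical to its polynomial-decay counterpart Lemma \ref{L4.2}, because the ansatz $R_w(t) = R_w^0(1+t)^{\delta_e}$ has the same polynomial shape in $t$ as in the polynomial case; only the subscript on $\delta$ changes. So I do not expect to need any exponential-moment input at this step --- the hypothesis $|\bw(t)|, |\bw_\star(t)| \le R_w(t)$ is all that is used.

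First I would invoke Lemma \ref{L2.3} with the choice $R = R_w(t)$, $\bx = \bw_\star(t)$, and $\by = \bw(t)$. This immediately produces the coercivity bound
\[
\Big\langle \bw_\star(t) - \bw(t), \, \tfrac{\bw_\star(t)}{\tilde F_\star} - \tfrac{\bw(t)}{\tilde F} \Big\rangle \;\geq\; \Lambda\big(R_w(t)\big)\, |\bw_\star(t) - \bw(t)|^2 .
\]
Next I would substitute the explicit asymptotic for $\Lambda$ from Remark \ref{R2.3}, namely $\Lambda(R) = \mathcal{O}\big(1 / (2(1 + R^2/c^2)^{3/2})\big)$, and plug in $R_w(t) = R_w^0(1+t)^{\delta_e}$. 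Using the elementary inequality $(1+a^2)^{3/2} \lesssim 2\sqrt{2}\max\{1,|a|^3\}$, I would estimate
\[
\Lambda\big(R_w(t)\big) \;\gtrsim\; \frac{1}{2\big(1 + (R_w^0)^2 (1+t)^{2\delta_e}/c^2\big)^{3/2}} \;\gtrsim\; \frac{1}{4\sqrt 2\,\max\{1, (R_w^0/c)^{3}\}}\, (1+t)^{-3\delta_e},
\]
and then absorb all numerical factors into a constant $\tilde C_3 = \tilde C_3(R_w^0)$ (the speed of light $c$ being a fixed model parameter). Combining with the previous display gives the claimed lower bound.

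There is no genuine analytic obstacle here, since the pointwise coercivity of $\bw \mapsto \bw/\tilde F$ is already packaged inside Lemma \ref{L2.3}, and the decay rate in $t$ is determined purely by the growth of the velocity confinement radius. The only mildly delicate point is to record the constant cleanly so that the subsequent exponential-moment analysis --- in particular the analogues of $\mathcal{I}_{111}$ and the associated Grönwall step leading to Theorem \ref{T3.2}(1) --- can reuse the $(1+t)^{-3\delta_e}$ rate without re-deriving it. Apart from this bookkeeping, the proof is immediate.
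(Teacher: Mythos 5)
Your proposal is correct and follows essentially the same route as the paper: the paper's own proof of Lemma~\ref{L5.2} simply says ``replace $\delta_p$ by $\delta_e$ in the proof of Lemma~\ref{L4.2},'' and the proof of Lemma~\ref{L4.2} is exactly the two-step argument you give --- apply the coercivity inequality of Lemma~\ref{L2.3} with $R = R_w(t)$, then lower-bound $\Lambda(R_w(t))$ via Remark~\ref{R2.3} and absorb constants to extract the $(1+t)^{-3\delta_e}$ rate. (You also correctly cite Lemma~\ref{L2.3} for the coercivity; the paper's reference to Lemma~\ref{L2.5} at the corresponding point in the proof of Lemma~\ref{L4.2} appears to be a typo.)
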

\begin{proof} Once we change $\delta_p$ with $\delta_e$, the proof will be exactly the same as in Lemma \ref{L4.2}. Hence we omit the details. 
\end{proof}
\subsubsection{Derivation of differential inequality for ${\mathcal L}$} \label{sec:5.1.2}
Note that  
\begin{align}
\begin{aligned} \label{E-2}
\frac{\di}{\di t} {\mathcal L}(t) &\leq  -\underline{\phi}(t) \int_{\mathbb{R}^{4d}}\left\langle \bw_{\star}(t)- \bw(t),\frac{\bw_{\star}(t)}{{\tilde F}_{\star}} - \frac{\bw(t)}{{\tilde F}} \right\rangle \mu_0(\di \bx,\di \bw) \mu_0(\di \bx_{\star},\di\bw_{\star})\\
&+ \underline{\phi}(t)  \int_{\mathbb{R}^{2d}} \int_{|\bx_{\star}(t)|> R_x(t)}\left\langle \bw_{\star}(t)-\bw(t),\frac{\bw_{\star}(t)}{{\tilde F}_{\star}} - \frac{\bw(t)}{{\tilde F}} \right\rangle \mu_0(\di \bx,\di \bw) \mu_0(\di \bx_{\star},\di\bw_{\star})\\
&+\underline{\phi}(t) \int_{|\bx(t)|> R_x(t)}\int_{\mathbb{R}^{2d}}\left\langle \bw_{\star}(t)-\bw(t),\frac{\bw_{\star}(t)}{{\tilde F}_{\star}} - \frac{\bw(t)}{{\tilde F}} \right\rangle \mu_0(\di \bx,\di \bw) \mu_0(\di \bx_{\star},\di\bw_{\star})\\
&=:{\mathcal I}_{31} + {\mathcal I}_{32} + {\mathcal I}_{33}.
\end{aligned}
\end{align}
In the following two lemmas, we estimate the term ${\mathcal I}_{3i},~i=1,2,3$ one by one. 
\begin{lemma}  \label{L5.3} 
There exists a positive constant $C_4 = C_4 (c, \mathcal{M}_p(\mu_0,2), C_1)$ such that 
\[ |{\mathcal I}_{32}(t)| \leq  \frac{{\tilde C}_4}{2} e^{-\frac{\alpha}{2}(\gamma_e -c)t}, \quad  |{\mathcal I}_{33}(t)| \leq  \frac{{\tilde C}_4}{2} e^{-\frac{\alpha}{2}(\gamma_e -c)t}. \]
\end{lemma}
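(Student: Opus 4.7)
The plan is to mirror the argument of Lemma~\ref{L4.3} from the polynomial setting, replacing the polynomial tail estimate with the exponential tail estimate supplied by Lemma~\ref{L5.1}. First, by swapping the roles of $(\bx,\bw)$ and $(\bx_\star,\bw_\star)$ in the defining integrals, one checks immediately that $\mathcal{I}_{32}=\mathcal{I}_{33}$, so it suffices to bound $\mathcal{I}_{32}$.

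Next, I would use the two ingredients that are already in the paper: the relativistic velocity bound $\bigl|\bw/\tilde F\bigr|\le c$ (from \eqref{A-3-2}, propagated along the flow via the representation \eqref{RKCS-4}) and the trivial bound $\underline{\phi}(t)\le 1$. These give the pointwise estimate
\[
\left|\left\langle \bw_\star(t)-\bw(t),\tfrac{\bw_\star(t)}{\tilde F_\star}-\tfrac{\bw(t)}{\tilde F}\right\rangle\right|\le 2c\bigl(|\bw(t)|+|\bw_\star(t)|\bigr),
\]
so that, proceeding exactly as in \eqref{C-8},
\[
|\mathcal{I}_{32}(t)|\le 2c\underline{\phi}(t)\int_{\mathbb{R}^{2d}}\!\!\int_{|\bx_\star(t)|>R_x(t)}\bigl(|\bw(t)|+|\bw_\star(t)|\bigr)\mu_0(\di\bx,\di\bw)\mu_0(\di\bx_\star,\di\bw_\star).
\]

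Now I would split this into two integrals. For the $|\bw_\star(t)|$ piece, Cauchy--Schwarz in the $(\bx_\star,\bw_\star)$ variable yields
\[
\int_{|\bx_\star(t)|>R_x(t)}\!\!|\bw_\star(t)|\,\mu_0(\di\bx_\star,\di\bw_\star)\le \sqrt{\mathcal{M}_p(\mu_0,2)}\left(\int_{|\bx_\star(t)|>R_x(t)}\mu_0(\di\bx_\star,\di\bw_\star)\right)^{\!1/2}\!,
\]
where $\mathcal{M}_p(\mu_0,2)$ is finite since $\mathcal{M}_e(\mu_0,\alpha)<\infty$ trivially controls every polynomial moment. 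For the $|\bw(t)|$ piece, the integral factorizes and the tail is picked up only once, giving $\sqrt{\mathcal{M}_p(\mu_0,2)}\cdot\int_{|\bx_\star(t)|>R_x(t)}\mu_0$. Applying the exponential tail estimate \eqref{E-1} of Lemma~\ref{L5.1}, namely $\int_{|\bx(t)|>R_x(t)}\mu_0\le \tilde C_1 e^{-\alpha(\gamma_e-c)t}$, and using $\underline{\phi}(t)\le 1$, one obtains
\[
|\mathcal{I}_{32}(t)|\le 2c\sqrt{\mathcal{M}_p(\mu_0,2)}\Bigl(\sqrt{\tilde C_1}\,e^{-\frac{\alpha}{2}(\gamma_e-c)t}+\tilde C_1\,e^{-\alpha(\gamma_e-c)t}\Bigr).
\]
Since the second summand decays strictly faster than the first, both can be absorbed into a single constant $\tilde C_4/2$ with the slower rate $e^{-\frac{\alpha}{2}(\gamma_e-c)t}$, yielding the claimed bound.

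There is no real obstacle here; the only point that deserves care is the bookkeeping of where Cauchy--Schwarz is applied (producing the exponent $\alpha/2$ rather than $\alpha$). This square-rooting of the tail mass is what forces the rate $\frac{\alpha}{2}(\gamma_e-c)$ in the final estimate, matching the rate anticipated in Theorem~\ref{T3.2}, and explains why $\gamma_e>c$ is imposed in \eqref{E-0}. Once one realizes this, the symmetry argument for $\mathcal{I}_{33}$ is identical.
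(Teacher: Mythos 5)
Your proposal is correct and follows essentially the same route as the paper: the symmetry $\mathcal{I}_{32}=\mathcal{I}_{33}$, the bound $|\bw/\tilde F|\le c$ and $\underline\phi\le 1$, the two Cauchy--Schwarz splits (one in the $(\bx_\star,\bw_\star)$ integral producing the square root of the tail mass, one in the $\bw$ integral producing $\sqrt{\mathcal{M}_p(\mu_0,2)}$), and the exponential tail estimate from Lemma~\ref{L5.1}, which is exactly how the paper adapts \eqref{C-8} to the exponential setting. The observation that the $\alpha/2$ rate comes from the square-rooted tail, and that the faster $e^{-\alpha(\gamma_e-c)t}$ term is absorbed into the slower one, also matches the paper's final absorption step.
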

\begin{proof}
By symmetry, we have
\[ {\mathcal I}_{32} = {\mathcal I}_{33}. \]
Hence, we can use the relation:
\[ {\mathcal M}_e(\mu_0, \alpha) < \infty  \quad \Longrightarrow \quad \mathcal{M}_p(\mu_0,2) < \infty, \]
and estimate in Lemma \ref{L5.1} to derive our estimates as we did for polynomial decay distributions. More precisely, we have
\begin{align*}
\begin{aligned}
{\mathcal I}_{32}(t) &\le 2c \underline{\phi}(t)  \left(\mathcal{M}_p(\mu_0,2)\right)^{\frac{1}{2}} \left[ \left(\int_{|\bx_{\star}(t)|> R_x(t)}\mu_0(\di \bx_{\star},\di\bw_{\star})\right)^{\frac{1}{2}}+\int_{|\bx_{\star}(t)|> R_x(t)}\mu_0(\di \bx_{\star},\di\bw_{\star})\right ] \\
& \leq  2c \left(\mathcal{M}_p(\mu_0,2)\right)^{\frac{1}{2}}  \Big( \sqrt{{\tilde C}_1} e^{-\frac{\alpha}{2}(\gamma_e -c)t}  +  {\tilde  C}_1 e^{-\alpha(\gamma_e -c)t} \Big) \\
&\le  4c \left(\mathcal{M}_p(\mu_0,2)\right)^{\frac{1}{2}} \max\{ \sqrt{{\tilde C}_1}, {\tilde C}_1 \}  e^{-\frac{\alpha}{2}(\gamma_e -c)t}    \\
&=: \frac{{\tilde C}_4}{2} e^{-\frac{\alpha}{2}(\gamma_e -c)t}.
\end{aligned}
\end{align*}
The same analysis can be done for ${\mathcal I}_{33}$. Hence, we omit its details. 
\end{proof}
\noindent Next, we return to the estimate of the term ${\mathcal I}_{31}$. It follows from Lemma \ref{L5.1} and Lemma \ref{L5.2} that 
\begin{align} 
\begin{aligned} \label{E-3}
{\mathcal I}_{31} & \le - {\tilde C}_3 \underline{\phi}(t) (1 + t)^{-3\delta_e} \int_{|\bw(t)|\le R_{w}(t)}\int_{|\bw_{\star}(t)|\le R_{w}(t)} | \bw_{\star}(t)-\bw(t)|^2\mu_0(\di \bx,\di \bw) \mu_0(\di \bx_{\star},\di\bw_{\star}) \\
&=-{\tilde C}_3 \underline{\phi}(t)  (1 + t)^{-3\delta_e} \int_{\mathbb{R}^{4d}} |\bw_{\star}(t)-\bw(t)|^2\mu_0(\di \bx,\di \bw) \mu_0(\di \bx_{\star},\di\bw_{\star})\\
& \hspace{0.2cm} + {\tilde C}_3 \underline{\phi}(t)  (1 + t)^{-3\delta_e} \int_{|\bw(t)|> R_{w}(t)}\int_{\mathbb{R}^{2d}} |\bw_{\star}(t)-\bw(t)|^2\mu_0(\di \bx,\di \bw) \mu_0(\di \bx_{\star},\di\bw_{\star})\\
&\hspace{0.2cm} +{\tilde C}_3 \underline{\phi}(t)  (1 + t)^{-3\delta_e} \int_{|\bw_{\star}(t)|> R_{w}(t)}\int_{\mathbb{R}^{2d}} |\bw_{\star}(t)-\bw(t)|^2\mu_0(\di \bx,\di \bw) \mu_0(\di \bx_{\star},\di\bw_{\star})\\
&=: {\mathcal I}_{311}+ {\mathcal I}_{312}+ {\mathcal I}_{313}.
\end{aligned}
\end{align}
In the following lemma, we estimate the term ${\mathcal I}_{31i},~i=1,2,3$ one by one. 
\begin{lemma} \label{L5.4}
There exist positive constants ${\tilde C}_5$ and ${\tilde C}_6$ such that 
\begin{align*}
\begin{aligned}
& (i)~{\mathcal I}_{311} =  -{\tilde C}_5(1 + t)^{-(\beta + 3 \delta_e)} {\mathcal L}. \\
& (ii)~{\mathcal I}_{312}  \leq \frac{{\tilde C}_6}{2} (1 + t)^{-3 \delta_e} e^{-\frac{\alpha}{2} (1+t)^{\delta_e}}. \\
& (iii)~{\mathcal I}_{313} \leq  \frac{{\tilde C}_6}{2} (1 + t)^{-3 \delta_e} e^{-\frac{\alpha}{2} (1+t)^{\delta_e}}. \\
& (iv)~{\mathcal I}_{31} \leq    -{\tilde C}_5(1 + t)^{-(\beta + 3 \delta_e)} {\mathcal L} + {\tilde C}_6 (1 + t)^{-3 \delta_e} e^{-\frac{\alpha}{2} (1+t)^{\delta_e}}.
\end{aligned}
\end{align*}
\end{lemma}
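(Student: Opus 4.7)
The proof plan is to mirror the four-part decomposition already carried out for the polynomial decay case in Lemma \ref{L4.4}, with polynomial tail estimates replaced throughout by the exponential ones provided by Lemma \ref{L5.1} and Lemma \ref{L5.2}. Since the three main terms ${\mathcal I}_{311}, {\mathcal I}_{312}, {\mathcal I}_{313}$ in \eqref{E-3} arise from a symmetry split entirely analogous to the one performed in Section \ref{sec:4.1.1}, the bookkeeping is parallel; only the quantitative sizes of the pointwise lower bound on $\phi$ and of the far-field mass of $\mu_t$ have changed.

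For part (i), I would start from the definition of ${\mathcal I}_{311}$ in \eqref{E-3}, insert $\bw_{\star}(t)-\bw(t)=(\bw_{\star}(t)-\bw_c(t))-(\bw(t)-\bw_c(t))$ and expand the square. Using the identity $\int_{\bbr^{2d}}(\bw(t)-\bw_c(t))\mu_0(\di\bx,\di\bw)=0$, which follows from \eqref{RKCS-4} and Lemma \ref{L2.2} exactly as in the proof of Lemma \ref{L4.4} (i), the cross term vanishes and the double integral collapses to $2{\mathcal L}(t)$. Combining this with $\underline{\phi}(t)\ge{\tilde C}_2(1+t)^{-\beta}$ from Lemma \ref{L5.1} and the coercivity factor $(1+t)^{-3\delta_e}$ from Lemma \ref{L5.2} yields ${\mathcal I}_{311}\le-{\tilde C}_5(1+t)^{-(\beta+3\delta_e)}{\mathcal L}(t)$ with ${\tilde C}_5:=2{\tilde C}_2{\tilde C}_3$.

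For parts (ii) and (iii), I exploit the symmetry ${\mathcal I}_{312}={\mathcal I}_{313}$, so it suffices to treat ${\mathcal I}_{312}$. I would apply $|\bw_{\star}(t)-\bw(t)|^2\le 2(|\bw_{\star}(t)|^2+|\bw(t)|^2)$ together with $\underline{\phi}\le 1$ and split the integral into two pieces. The first, carrying $|\bw_{\star}(t)|^2$, factorizes as $\bigl(\int|\bw_{\star}(t)|^2\mu_0\bigr)\cdot\bigl(\int_{|\bw(t)|>R_w(t)}\mu_0\bigr)$; the velocity second moment is propagated by Lemma \ref{L2.4}, while the tail of the $\bw$-marginal is controlled by Corollary \ref{C2.2}, producing decay $e^{-\alpha R_w(t)}$. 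The piece carrying $|\bw(t)|^2$, after pushing forward to time $t$, equals $\int_{|\bw|>R_w(t)}|\bw|^2\mu_t(\di\bx,\di\bw)$; here I would use the elementary inequality $|\bw|^2\le C(\alpha)e^{\alpha|\bw|/2}$ combined with the splitting $e^{\alpha|\bw|/2}\le e^{-\alpha R_w(t)/2}\, e^{\alpha|\bw|}$ valid on the tail, followed by Lemma \ref{L2.5}, yielding decay $e^{-\alpha R_w(t)/2}$. With $R_w(t)=R_w^0(1+t)^{\delta_e}$ and all multiplicative constants absorbed into ${\tilde C}_6$, the two contributions combine into $\frac{{\tilde C}_6}{2}(1+t)^{-3\delta_e}e^{-\frac{\alpha}{2}(1+t)^{\delta_e}}$. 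Part (iv) then follows by summing (i)--(iii) in \eqref{E-3}.

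The main obstacle I anticipate is the tail estimate for $\int_{|\bw|>R_w(t)}|\bw|^2\mu_t(\di\bx,\di\bw)$: the integrand grows polynomially while the available measure decays only through its exponential moment, so the trick of trading polynomial growth for half of the exponential weight (and reserving the other half to produce the tail factor $e^{-\alpha R_w(t)/2}$) is what sets the final exponent to $\alpha/2$ rather than $\alpha$ in the decay rate. Beyond this, one must check that ${\tilde C}_5$ and ${\tilde C}_6$ depend only on $c,\beta,\alpha,R_w^0,R_x^0,\gamma_e$ and on ${\mathcal M}_e(\mu_0,\alpha)$, so that the resulting differential inequality in \eqref{E-2} feeds cleanly into the generalized Gr\"onwall lemma (Lemma \ref{L2.6}) in Section \ref{sec:5.1.2} to deliver the stretched exponential decay of ${\mathcal L}$ claimed in Theorem \ref{T3.2}.
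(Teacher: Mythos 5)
Your proposal is correct and follows the paper's architecture closely; the only genuine deviation is in how you bound the second tail piece $\int_{|\bw|>R_w(t)}|\bw|^2\,\mu_t(\di\bx,\di\bw)$. The paper reuses the H\"older-interpolation machinery of Lemma~\ref{L4.4} (Case B) with $l_1=2$, splitting off the measure of the tail set and pairing it with the fourth moment $\mathcal{M}_p(\mu_0,4)$ (finite since $\mathcal{M}_e(\mu_0,\alpha)<\infty$), which yields the factor $\sqrt{\mathcal{M}_p(\mu_0,4)\,\tilde{C}_1}\,e^{-\frac{\alpha}{2}(1+t)^{\delta_e}}$. You instead absorb the polynomial weight into half the exponential, via $|\bw|^2\le C(\alpha)e^{\alpha|\bw|/2}$ and $e^{\alpha|\bw|/2}\le e^{-\alpha R_w(t)/2}e^{\alpha|\bw|}$ on the tail, and close with Lemma~\ref{L2.5}. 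Both routes deliver the identical decay rate $e^{-\frac{\alpha}{2}R_w(t)}$, so they are interchangeable; the paper's choice is economical in that it literally recycles the polynomial-case computation, while yours is self-contained within the exponential framework and avoids invoking the fourth moment as an intermediary. Parts (i), (iii), and (iv) of your argument coincide with the paper's (in (i), note that the lemma's displayed ``$=$'' is really a ``$\le$'' after applying the lower bound on $\underline{\phi}$; your rendering with an inequality is the accurate one). One small bookkeeping remark common to both presentations: Corollary~\ref{C2.2} actually gives the tail factor $e^{-\alpha R_w(t)}=e^{-\alpha R_w^0(1+t)^{\delta_e}}$, so the $R_w^0$ should sit in the exponent; the paper silently normalizes it away and you should do the same when fixing $\tilde{C}_6$.
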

\begin{proof} (i) Similar to Lemma \ref{L4.4}, one has 
\[
{\mathcal I}_{311}  = -2  {\tilde C}_3 \underline{\phi}(t) (1 + t)^{-3\delta_e} {\mathcal L} \leq -2  {\tilde C}_2 {\tilde C}_3 (1 + t)^{-(\beta + 3 \delta_e)} {\mathcal L} = -{\tilde C}_5(1 + t)^{-(\beta + 3 \delta_e)} {\mathcal L}.
\]
\noindent (ii)~By symmetry, it is easy to see that 
\[ {\mathcal I}_{312} = {\mathcal I}_{313}. \]
For $	{\mathcal I}_{312}$, we have 
	\begin{align} 
		\begin{aligned} \label{E-4}
			{\mathcal I}_{312}&=  C_3 \underline{\phi}(t)  (1 + t)^{-3\delta_e}  \int_{|\bw|> R_w(t)}\int_{\mathbb{R}^{2d}} |\bw_{\star}(t)-\bw(t)|^2\mu_0(\di \bx,\di \bw) \mu_0(\di \bx_{\star},\di\bw_{\star}) \\
			&	\le 2 {\tilde C}_3(1 + t)^{-3\delta_e}   \int_{|\bw(t)|> R_w(t)}\int_{\mathbb{R}^{2d}} \Big(|\bw_{\star}(t)|^2+ |\bw(t)|^2 \Big) \mu_0(\di \bx,\di \bw) \mu_0(\di \bx_{\star},\di\bw_{\star}).
	\end{aligned}
	\end{align}	
By the same analysis as in Lemma \ref{L4.4}, we choose $l_1=2$ to see
\begin{align}
\begin{aligned} \label{E-5} 
& \int_{|\bw(t)|> R_w(t)}\int_{\mathbb{R}^{2d}} | \bw_{\star}(t)|^2\mu_0(\di \bx,\di \bw) \mu_0(\di \bx_{\star},\di\bw_{\star}) \leq    {\mathcal M}_{p}(\mu_0, 2) {\tilde C}_1 e^{-\alpha(1+t)^{\delta_e}}, \\
& \int_{|\bw(t)|> R_w(t)}\int_{\mathbb{R}^{2d}} | \bw(t)|^2\mu_0(\di \bx,\di \bw) \mu_0(\di \bx_{\star},\di\bw_{\star}) \leq \sqrt{\mathcal{M}_p(\mu_0, 4) {\tilde C}_1}  e^{-\frac{\alpha}{2} (1+t)^{\delta_e}}.
\end{aligned}
\end{align}
In \eqref{E-3}, we combine all the estimates in \eqref{E-4} and \eqref{E-5} to find 
\begin{align*}
\begin{aligned}
		{\mathcal I}_{31} &\le -{\tilde C}_5(1 + t)^{-(\beta + 3 \delta_e)} {\mathcal L}\\
		&\quad+ 2 {\tilde C}_3(1 + t)^{-3\delta_e} \Big( {\mathcal M}_{p}(\mu_0, 2) {\tilde C}_1 e^{-\alpha(1+t)^{\delta_e}} + \sqrt{\mathcal{M}_p(\mu_0, 4) {\tilde C}_1}  e^{-\frac{\alpha}{2} (1+t)^{\delta_e}} \Big) \\
		&\leq -{\tilde C}_5(1 + t)^{-(\beta + 3 \delta_e)} {\mathcal L}+{\tilde C}_6 (1 + t)^{-3 \delta_e} e^{-\frac{\alpha}{2} (1+t)^{\delta_e}}, \quad t > 0.
	\end{aligned}
\end{align*}
\end{proof}
In \eqref{E-2}, we combine all the estimates in Lemma \ref{L5.3} and Lemma \ref{L5.4} to find  
\begin{align*}
\begin{aligned}
\frac{\di {\mathcal L}}{\di t} &\leq   -{\tilde C}_5(1 + t)^{-(\beta + 3 \delta_e)} {\mathcal L} + {\tilde C}_6 (1 + t)^{-3 \delta_e} e^{-\frac{\alpha}{2} (1+t)^{\delta_e}} + {\tilde C}_4 e^{-\frac{\alpha}{2}(\gamma_e -c)t} \\
& \leq -{\tilde C}_5(1 + t)^{-(\beta + 3 \delta_e)} {\mathcal L} + {\tilde C}_7 e^{-\frac{\alpha}{2}(1+t)^{\delta_e}}.
\end{aligned}
\end{align*}
\subsubsection{Zero asymptotic convergence of  ${\mathcal L}$} \label{sec:5.1.3}
Now, we apply Lemma \ref{L2.6} with 
\[  (y, p, q) \quad \Longleftrightarrow \quad  \Big ({\mathcal L}, -{\tilde C}_5(1 + t)^{-(\beta + 3 \delta_e)},~{\tilde C}_7 e^{-\frac{\alpha}{2}(1+t)^{\delta_e}}   \Big) \]
to get 
\begin{align}
\begin{aligned}  \label{E-6}
{\mathcal L}(t) &\leq {\mathcal L}(0) \exp \Big[  - {\tilde C}_5 \int_0^t  (1 + \tau)^{-(3\delta_e + \beta)} \di\tau \Big] \\
& + {\tilde C}_7  \exp \Big[ - {\tilde C}_5 \int_{\frac{t}{2}}^t   (1 + \tau)^{- ( 3\delta_e + \beta)}  \di\tau \Big] \int_0^{\frac{t}{2}} \exp \Big[-\frac{\alpha}{2}(1+\tau)^{\delta_e} \Big] \di\tau \\
& +  {\tilde C}_7 e^{-\frac{\alpha}{4}(1+t)^{\delta_e}}   \int_{\frac{t}{2}}^t \exp \Big[-{\tilde C}_5 \int_s^t (1 + \tau)^{-(3\delta_e + \beta)} \di\tau \Big] \di s \\
&=: {\mathcal I}_{41} + {\mathcal I}_{42} + {\mathcal I}_{43}.
\end{aligned}
\end{align}
\begin{lemma} \label{L5.5}
There exists a positive constant ${\tilde C}_8$ such that 
\begin{align*}
\begin{aligned}
& (i)~{\mathcal I}_{41} \leq  {\mathcal L}(0) \exp \Big[- \frac{{\tilde C}_5}{1-(3\delta_e + \beta)} \Big ((1 + t)^{1-(3\delta_e +\beta)}  -1\Big) \Big].  \\
& (ii)~{\mathcal I}_{42} \leq  {\tilde C}_8 \exp \Big[- \frac{{\tilde C}_5 t}{2\left(1+t\right)^{3\delta_e + \beta}} \Big]. \\
& (iii)~{\mathcal I}_{43} \leq   {\tilde C}_8 \exp \Big[ -\frac{\alpha}{4}(1+t)^{\delta_e} \Big].
\end{aligned}
\end{align*}
\end{lemma}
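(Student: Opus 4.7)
The plan is to treat the three parts as essentially independent computations on the three pieces of the Grönwall-type decomposition in \eqref{E-6}, using the assumption $3\delta_e + \beta < 1$ inherited from Theorem \ref{T3.2} and the fact that $\delta_e > 0$ makes the double-exponential $e^{-\frac{\alpha}{2}(1+\tau)^{\delta_e}}$ integrable on $[0,\infty)$. Throughout, I would just compute the elementary antiderivatives directly and then identify the dominant factors.

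For (i), the antiderivative is closed-form: since $3\delta_e + \beta < 1$,
\[
\int_0^t (1+\tau)^{-(3\delta_e+\beta)}\di\tau = \frac{(1+t)^{1-(3\delta_e+\beta)}-1}{1-(3\delta_e+\beta)}.
\]
Plugging this back into $\mathcal{I}_{41} = \mathcal{L}(0)\exp[-\tilde C_5 \int_0^t(\ldots)\di\tau]$ gives the stated bound directly.

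For (ii), I would first estimate the inner integral from below by bounding the integrand at its minimum on $[t/2,t]$:
\[
\int_{t/2}^t (1+\tau)^{-(3\delta_e+\beta)}\di\tau \geq \frac{t/2}{(1+t)^{3\delta_e+\beta}},
\]
so the prefactor $\exp[-\tilde C_5\int_{t/2}^t(\ldots)]$ is dominated by $\exp[-\tilde C_5 t / (2(1+t)^{3\delta_e+\beta})]$. The other factor is $\int_0^{t/2} e^{-\frac{\alpha}{2}(1+\tau)^{\delta_e}}\di\tau$, which is bounded by a finite constant $\tilde C_8$ independent of $t$ because $\delta_e > 0$ makes the integrand a stretched-exponential whose integral over $[0,\infty)$ converges (a change of variables $u = (1+\tau)^{\delta_e}$ makes this transparent).

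For (iii) I would bound the inner exponential trivially by $1$, so $\int_{t/2}^t \exp[-\tilde C_5\int_s^t(\ldots)\di\tau]\di s \leq t/2$. The resulting bound is $\tilde C_7 \cdot (t/2)\cdot e^{-\frac{\alpha}{4}(1+t)^{\delta_e}}$. To absorb the $t/2$ factor, I would split the exponential as
\[
e^{-\frac{\alpha}{4}(1+t)^{\delta_e}} = e^{-\frac{\alpha}{8}(1+t)^{\delta_e}} \cdot e^{-\frac{\alpha}{8}(1+t)^{\delta_e}},
\]
and observe that $\sup_{t\geq 0}\frac{t}{2}\,e^{-\frac{\alpha}{8}(1+t)^{\delta_e}} < \infty$ because any polynomial is dominated by a stretched exponential with positive exponent. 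This gives a constant factor that can be absorbed into $\tilde C_8$, but at the cost of replacing $\alpha/4$ by $\alpha/8$ in the decay rate. The main obstacle — really the only subtle point — is ensuring the constant in the final exponent matches the statement ($\alpha/4$ rather than $\alpha/8$). The cleanest resolution is to redefine $\tilde C_7$ upstream so that the input to Lemma \ref{L2.6} uses $e^{-\frac{\alpha}{2}(1+t)^{\delta_e}}$ with a slightly more generous splitting, or to simply allow $\tilde C_8$ to absorb a modified decay constant and interpret the statement up to such a harmless relabeling of constants. Apart from this bookkeeping step, all three bounds reduce to direct evaluation of elementary integrals.
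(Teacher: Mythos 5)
Your approach to parts (i) and (ii) is essentially the paper's. In (i) both of you evaluate the antiderivative directly. In (ii) the paper invokes the mean value theorem to get $(1+t)^{1-(3\delta_e+\beta)}-(1+t/2)^{1-(3\delta_e+\beta)} \geq (1-(3\delta_e+\beta))\frac{t}{2(1+t)^{3\delta_e+\beta}}$, while you bound the integrand by its minimum value at $\tau=t$; both give the same lower bound $\int_{t/2}^t(1+\tau)^{-(3\delta_e+\beta)}\di\tau \geq \frac{t/2}{(1+t)^{3\delta_e+\beta}}$, and both use that $\int_0^{t/2}e^{-\frac{\alpha}{2}(1+\tau)^{\delta_e}}\di\tau$ converges.

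Part (iii) is where your suspicion is exactly right, and in fact you have spotted a genuine flaw in the paper's argument. The paper's proof of (iii) consists of the single assertion
\[
\int_{\frac{t}{2}}^t \exp\Big[-\tilde C_5\int_s^t(1+\tau)^{-(3\delta_e+\beta)}\di\tau\Big]\di s \leq \tilde C_8,
\]
but this is false: the substitution $u=(1+s)^\nu$ with $\nu = 1-(3\delta_e+\beta)\in(0,1)$ shows this integral grows like $(1+t)^{3\delta_e+\beta}/\tilde C_5$ as $t\to\infty$, not like a constant. Consequently $\mathcal{I}_{43}$, as the paper has literally defined it (with $e^{-\frac{\alpha}{4}(1+t)^{\delta_e}}$ already extracted), is not bounded by $\tilde C_8 e^{-\frac{\alpha}{4}(1+t)^{\delta_e}}$; the ratio grows polynomially. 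Your patch — using the crude bound $t/2$ for the integral and then splitting the exponential to absorb the polynomial — is correct, but it downgrades the exponent to $\alpha/8$.

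There is a cleaner repair that retains the advertised $\alpha/4$, and you hinted at it when you suggested redefining $\tilde C_7$ upstream. The source term entering Lemma \ref{L2.6} is $q(\tau)=\tilde C_7 e^{-\frac{\alpha}{2}(1+\tau)^{\delta_e}}$, so before any rounding one has $q(t/2) \leq \tilde C_7 e^{-\frac{\alpha}{2}2^{-\delta_e}(1+t)^{\delta_e}}$. Since $\delta_e < \frac{1-\beta}{3} < 1$, the exponent constant $\frac{\alpha}{2}2^{-\delta_e}$ is \emph{strictly} larger than $\frac{\alpha}{4}$, so you may write $q(t/2) \leq \tilde C_7 e^{-\frac{\alpha}{4}(1+t)^{\delta_e}} e^{-\eta(1+t)^{\delta_e}}$ with $\eta := \frac{\alpha}{2}2^{-\delta_e}-\frac{\alpha}{4}>0$, and the residual stretched exponential $e^{-\eta(1+t)^{\delta_e}}$ dominates the polynomial factor $(1+t)^{3\delta_e+\beta}$ coming from $\int_{t/2}^t(\ldots)\di s$. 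This yields (iii) with $\alpha/4$ as stated. Either way, the downstream Proposition and Theorem \ref{T3.2}, which use $\lesssim$, are unaffected; but you were right that the paper's claim as written does not follow from the step it gives, and you should not be troubled that your proof didn't reproduce that step.
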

\begin{proof}
\noindent (i)~Note that 
\[
\exp\Big[ - {\tilde C}_5 \int_0^t  (1 + \tau)^{-(3\delta_e + \beta)} \di\tau \Big] =  \exp \Big[ - \frac{{\tilde C}_5}{1-(3\delta_e + \beta)} \Big((1 + t)^{1-(3\delta_e +\beta)}-1 \Big) \Big].
\]
\vspace{0.2cm}

\noindent (ii)~ Note that 
\begin{align}
\begin{aligned} \label{E-7}
& \exp \Big[- {\tilde C}_5 \int_{\frac{t}{2}}^t   (1 + \tau)^{- ( 3\delta_e + \beta)}  \di\tau \Big]  \\
& \hspace{2cm} = \exp\Big[ -\frac{{\tilde C}_5}{1-( 3\delta_e  + \beta)} \Big( (1 + t)^{1- (3\delta_e  + \beta)} -(1 + t/2)^{1-( 3\delta_e + \beta)} \Big) \Big], \\
&  \int_0^{\frac{t}{2}} e^{-\frac{\alpha}{2}(1+\tau)^{\delta_e}} \di\tau \le {\tilde C}_8,\quad\delta_e>0.
\end{aligned} 
\end{align}
Then we compute the derivative of function $(1+x)^{p},~p \in (0,1)$ as follows: \[ \frac{\di}{\di x}(1+x)^{p}= \frac{p}{(1+x)^{1-p}}. \]
Next, we use this relation to find some $\xi\in[0,t]$ such that
\begin{align}
\begin{aligned} \label{E-8}
& (1 + t/2 + t/2)^{1-(3\delta_e + \beta)} -(1 + t/2)^{1-(3\delta_e + \beta)}  \\
& \hspace{0.5cm} =(1-3\delta_e + \beta)\frac{1}{\left(1+\xi\right)^{3\delta_e + \beta}} (1 + t/2 + t/2-1-t/2)\\
& \hspace{0.5cm}  \geq (1-3\delta_e + \beta)\frac{1}{\left(1+t\right)^{3\delta_e + \beta}}\frac{t}{2} \\
& \hspace{0.5cm}  =(1-3\delta_e + \beta)\frac{t}{2\left(1+t\right)^{3\delta_e + \beta}}.
\end{aligned}
\end{align}
Now, we combine \eqref{E-7} and \eqref{E-8} to get 
\[
\exp \Big[ - {\tilde C}_5 \int_{\frac{t}{2}}^t   (1 + \tau)^{- ( 3\delta_e + \beta)}  \di\tau \Big] \leq \exp\Big [ - {\tilde C}_5 \frac{t}{2\left(1+t\right)^{3\delta_e + \beta}}\Big], \quad0< 3\delta_e+\beta<1.
\]
We collect the above estimates to find the desired estimates. \newline

\noindent (iii)~Note that 
\[
 \int_{\frac{t}{2}}^t \exp \Big[ -{\tilde C}_5 \int_s^t (1 + \tau)^{-(3\delta_e + \beta)} \di\tau \Big] \di s \leq {\tilde C}_8. 
\]
\end{proof}
In \eqref{E-6}, we combine all the estimates in Lemma \ref{L5.5} to get the decay of ${\mathcal L}$. 
\begin{proposition}
Suppose that $\alpha, \beta, \gamma_e$ and $\delta_e$ satisfy
\begin{equation} \label{E-15}
\alpha > 0, \quad 0 \leq \beta < 1, \quad \gamma_e > c, \quad 0 < \delta_e < \frac{1-\beta}{3},
\end{equation}
and let $\mu \in L^{\infty}([0,T);\mathcal{P}(\mathbb{R}^{2d}))$ be a measure-valued solution to \eqref{RKCS} with initial datum $\mu_0$ satisfying
\[  \int_{\mathbb{R}^{2d}}  \mu_0(\di \bx,\di \bw) = 1, \qquad {\mathcal M}_e(\mu_0, \alpha) < \infty.  \]
Then, the velocity alignment functional ${\mathcal L}$ satisfies 
\begin{align}\label{E-16}
{\mathcal L}(t) \lesssim {\tilde C}_{10} \left(e^{-{\tilde C}_9 t^{1-(3\delta_e + \beta)}}+e^{-\frac{\alpha}{4}(1+t)^{\delta_e}} \right),
\end{align}
for some positive constants ${\tilde C}_9$ and ${\tilde C}_{10}$.
\end{proposition}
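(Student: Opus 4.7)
My plan is to obtain the estimate \eqref{E-16} by first assembling a single scalar ODE bound for $\mathcal{L}(t)$ from the ingredients in Lemmas \ref{L5.3} and \ref{L5.4}, and then applying the generalized Gr\"onwall-type Lemma \ref{L2.6}. More precisely, combining the decompositions \eqref{E-2}, \eqref{E-3} with the estimates of Lemma \ref{L5.3} and Lemma \ref{L5.4}(iv) gives
\[
\frac{\di \mathcal{L}}{\di t} \leq -\tilde{C}_5 (1+t)^{-(\beta+3\delta_e)}\mathcal{L}(t) + \tilde{C}_7 e^{-\frac{\alpha}{2}(1+t)^{\delta_e}},
\]
where I absorb the faster-decaying term $\tilde{C}_4 e^{-\frac{\alpha}{2}(\gamma_e-c)t}$ coming from $\mathcal{I}_{32}+\mathcal{I}_{33}$ into $\tilde{C}_7 e^{-\frac{\alpha}{2}(1+t)^{\delta_e}}$ at the cost of enlarging the constant (the stretched-exponential factor dominates as $t\to\infty$ because $\delta_e<1$). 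Here I use the standing hypothesis $\alpha>0$, $\gamma_e>c$, $0\leq\beta<1$ from \eqref{E-15} to guarantee that both $p(t):=\tilde{C}_5(1+t)^{-(\beta+3\delta_e)}$ and $q(t):=\tilde{C}_7 e^{-\frac{\alpha}{2}(1+t)^{\delta_e}}$ are nonnegative and non-increasing, which is what Lemma \ref{L2.6} requires.

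Next I invoke Lemma \ref{L2.6} with the above identification $(y,p,q)\leftrightarrow(\mathcal{L},\tilde{C}_5(1+t)^{-(\beta+3\delta_e)},\tilde{C}_7 e^{-\frac{\alpha}{2}(1+t)^{\delta_e}})$, which yields the three-term upper bound $\mathcal{L}(t) \leq \mathcal{I}_{41}+\mathcal{I}_{42}+\mathcal{I}_{43}$ as in \eqref{E-6}. The bounds for $\mathcal{I}_{41},\mathcal{I}_{42},\mathcal{I}_{43}$ have already been assembled in Lemma \ref{L5.5}, so it remains only to read off the asymptotic rates of these three elementary expressions. At this point the condition $0<\delta_e<\frac{1-\beta}{3}$ from \eqref{E-15} becomes essential: it forces $0<3\delta_e+\beta<1$, which is precisely what makes the exponent in the Gr\"onwall kernel
\[
\exp\Big[-\frac{\tilde{C}_5}{1-(3\delta_e+\beta)}\big((1+t)^{1-(3\delta_e+\beta)}-1\big)\Big]
\]
diverge to $-\infty$ as a power of $t$, rather than merely remaining bounded.

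I would then combine the three pieces as follows. The first term $\mathcal{I}_{41}$ decays like $\exp(-C t^{1-(3\delta_e+\beta)})$ for large $t$, and the second term $\mathcal{I}_{42}$, after noting $\frac{t}{2(1+t)^{3\delta_e+\beta}}\sim \tfrac{1}{2} t^{1-(3\delta_e+\beta)}$, decays at the same rate. Hence both contribute a factor of the form $e^{-\tilde{C}_9 t^{1-(3\delta_e+\beta)}}$ for some suitable $\tilde{C}_9>0$. The third term $\mathcal{I}_{43}$ contributes the stretched exponential $e^{-\frac{\alpha}{4}(1+t)^{\delta_e}}$ directly, times an $O(1)$ integral which is uniformly bounded because $\delta_e>0$. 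Adding these and taking the larger constant $\tilde{C}_{10}:=\max\{\mathcal{L}(0),\tilde{C}_8\}$ produces exactly the claimed bound \eqref{E-16}.

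The only genuinely delicate step is the elementary but careful verification of the $\mathcal{I}_{42}$ estimate: one must check the mean-value-theorem bound
$(1+t)^{1-(3\delta_e+\beta)}-(1+t/2)^{1-(3\delta_e+\beta)} \geq (1-(3\delta_e+\beta))\cdot\frac{t}{2(1+t)^{3\delta_e+\beta}}$,
which crucially relies on $1-(3\delta_e+\beta)>0$. This is the main nontrivial computation and is exactly where the constraint \eqref{E-15} is tight; everything else is bookkeeping of positive constants and absorbing subdominant exponentials into the dominant ones.
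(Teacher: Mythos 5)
Your proposal is correct and follows essentially the same route as the paper: assemble the Gr\"onwall differential inequality $\frac{\di \mathcal{L}}{\di t}\leq -\tilde{C}_5(1+t)^{-(\beta+3\delta_e)}\mathcal{L}+\tilde{C}_7 e^{-\frac{\alpha}{2}(1+t)^{\delta_e}}$ from Lemmas \ref{L5.3}--\ref{L5.4}, apply Lemma \ref{L2.6}, and read off the rates of the three resulting terms exactly as in Lemma \ref{L5.5}. You are in fact slightly more careful than the paper's text: you verify explicitly that $p$ and $q$ are nonnegative and non-increasing (a hypothesis Lemma \ref{L2.6} requires but the paper does not mention), you correctly take $p(t)=\tilde{C}_5(1+t)^{-(\beta+3\delta_e)}$ without the spurious minus sign that appears in the paper's $(y,p,q)$ identification in \eqref{E-6}, and you write the mean-value-theorem factor as $1-(3\delta_e+\beta)$ rather than the paper's $(1-3\delta_e+\beta)$, which is evidently a typo there.
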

\begin{remark}
Note that for $\beta = 1$, the fourth conditions in \eqref{E-15} cannot be compatible.
\end{remark}
\subsection{Spatial cohesiveness}  \label{sec:5.2}
In this subsection, we show that the spatial cohesiveness holds:
\[
\sup_{0 \leq t < \infty} \int_{\mathbb{R}^{2d}} \Big |\bx(t)-\frac{\bw_c(0)}{{\tilde F}}t-\bx_c(0) \Big |^2\mu_0(\di \bx,\di \bw) < \infty.
\]
As in Section \ref{sec:4.2}, we can derive 
\begin{align*}
& \left(\int_{\mathbb{R}^{2d}} |\bx(t)-\frac{\bw_c(0)}{{\tilde F}}t-\bx_c(0) |^2\mu_0(\di \bx,\di \bw)\right)^{\frac{1}{2}} \\
&\qquad\qquad \leq \left(\int_{\mathbb{R}^{2d}} |\bx-\bx_c(0) |^2\mu_0(\di \bx,\di \bw)\right)^{\frac{1}{2}} + \int_0^t  \sqrt{{\mathcal L}(s)} \di s.
\end{align*}
Thus, spatial cohesiveness depends on the integrability of $\sqrt{{\mathcal L}}$. Note that the estimate \eqref{E-16} implies
\[
\sqrt{{\mathcal L}(t)} \lesssim \sqrt{{\tilde C}_{10} }\left(e^{-\frac{{\tilde C}_9}{2} t^{1-(3\delta_e + \beta)}}+e^{-\frac{\alpha}{8}(1+t)^{\delta_e}} \right), \quad t \geq 0.
\]
Then $\sqrt{{\mathcal L}(t)}$ is integrable and we have the desired spatial cohesive estimate.

\section{Conclusion}\label{sec:6}
\setcounter{equation}{0}
In this paper, we have derived weak flocking estimates for the relativistic kinetic Cucker-Smale model in a fully non-compact support setting. More precisely, we established a framework for weak flocking behavior and rigorously demonstrated polynomial and exponential convergence towards weak flocking under polynomial and exponential decay distributions, respectively. These emergent dynamics under Gaussian, sub-Gaussian, and \(D\)-moment integrable distributions provided a suitable basis for defining weak flocking behavior. In summary, our results contribute to the robust understanding of the RKCS model in non-compact support settings and lay a foundation for future exploration of relativistic collective behavior in unbounded physical domains. However, our proof technique does not extend to the critical exponent ($\beta = 1$). Additionally, the current work assumes all-to-all networks, and we leave the study of collective behavior under a general network topology as an open question. We leave these interesting issues as our further work.

\section*{Conflict of interest statement}
The authors declare no conflicts of interest.

\section*{Data availability statement}
The data supporting the findings of this study are available from the corresponding author upon reasonable request.
\section*{Ethical statement}
The authors declare that this manuscript is original, has not been published before, and is not currently being considered for publication elsewhere. The study was conducted by the principles of academic integrity and ethical research practices. All sources and contributions from others have been properly acknowledged and cited. The authors confirm that there is no fabrication, falsification, plagiarism, or inappropriate manipulation of data in the manuscript.

\bibliography{sn-bibliography}

\end{document}